\documentclass[11pt, twoside]{article}

\usepackage{graphicx}
\usepackage[caption=false]{subfig}

\usepackage[T1]{fontenc}
\usepackage[utf8]{inputenc}
\usepackage{amsmath}
\usepackage{amssymb,amsfonts}
\usepackage{amsthm}
\usepackage{mathtools}
\usepackage{bm}
\usepackage[mathscr]{eucal}
\usepackage{stmaryrd}
\usepackage{xcolor}
\usepackage{microtype}
\allowdisplaybreaks

\usepackage[margin=1in]{geometry}

\usepackage{multirow}
\usepackage{makecell}
\usepackage{booktabs}

\usepackage{algorithm}
\usepackage{algpseudocode}

\usepackage{cite}
\usepackage{hyperref}
\hypersetup{hidelinks}
\usepackage[nameinlink]{cleveref}

\numberwithin{equation}{section}

\theoremstyle{plain}
\newtheorem{theorem}{Theorem}[section]
\newtheorem{lemma}[theorem]{Lemma}

\newtheorem{corollary}[theorem]{Corollary}

\theoremstyle{definition}

\theoremstyle{remark}

\usepackage{fancyhdr}
\begin{document}

\title{Pressure-Robust $H(\mathrm{div})$-Conforming HDG Methods for the Steady Stokes Equations with an Application to Tangential Boundary Control}

\author{Gang Chen%
\thanks{School of Mathematics, Sichuan University, Chengdu, China (\mbox{cglwdm@scu.edu.cn}).}
\and
Wenyi Liu%
\thanks{School of Mathematics, Sichuan University, Chengdu, China (\mbox{liuwenyi@stu.scu.edu.cn}).}
\and
Yangwen Zhang%
\thanks{Department of Mathematics, University of Louisiana at Lafayette, Lafayette, LA 70503, USA (\mbox{yangwen.zhang@louisiana.edu}).}
}

\date{\today}

\maketitle

\begin{abstract}
We develop a family of $H(\mathrm{div})$-conforming hybridizable discontinuous Galerkin methods for the steady Stokes equations based on BDM and RT velocity spaces with either discontinuous or continuous hybrid traces. In contrast to our earlier pressure-robust HDG method for tangential boundary control, the present analysis does not require the pressure to belong to $H^1$; instead, the consistency argument only assumes low pressure regularity. The discrete velocities are exactly divergence-free, which yields pressure robustness. For the BDM variants we derive optimal energy-norm estimates and optimal $L^2$-velocity convergence, while for the RT variants we obtain optimal velocity convergence and weaker pressure estimates. We also analyze the hybridized linear system and prove a uniform spectral equivalence for the pressure Schur complement relevant to iterative solvers. As an application, we revisit the Stokes tangential boundary control problem and derive error estimates for the control, state, and adjoint variables using the BDM discontinuous-trace scheme. Two- and three-dimensional numerical experiments confirm the predicted convergence rates, the exact divergence-free property, and the robustness of the method with respect to the viscosity parameter.
\end{abstract}

\medskip
\noindent\textbf{2020 Mathematics Subject Classification.} 65N30, 65N12, 49J20, 76D07.

\medskip
\noindent\textbf{Keywords.} Stokes equations, $H(\mathrm{div})$-conforming HDG methods, pressure robustness, hybridization, tangential boundary control.

\begingroup

\section{Introduction}

Exactly divergence-free and pressure-robust discretizations are particularly attractive for incompressible flow problems. For the steady Stokes equations, one expects the velocity error to depend primarily on the divergence-free part of the forcing, rather than on the size of the irrotational component of the load. Hybridizable discontinuous Galerkin (HDG) methods are well suited to this setting: they combine local conservation, static condensation, and high-order approximation in a flexible finite element framework. The aim of this paper is to develop and analyze a family of $H(\mathrm{div})$-conforming HDG discretizations for the steady Stokes equations and to apply the BDM discontinuous-trace variant to a tangential boundary control problem.

As an application, we consider the tangential Dirichlet boundary control problem
\begin{align}\label{Ori_problem1}
	\min_{\bm u\in \bm U} J(\bm u)=\frac{1}{2}\|\bm y_{\bm u}-\bm y_{d}\|^2_{\bm L^{2}(\Omega)}+\frac{\gamma}{2}\| \bm u\|^2_{\bm U},
\end{align}
where $\gamma>0$ is a regularization parameter, $\bm U = \{\bm u = u\bm\tau; \ u\in L^2(\Gamma)\}$, and $\|\bm u\|_{\bm U} = \|u\|_{L^2(\Gamma)}$, with $\bm\tau$ denoting the unit tangential vector on $\Gamma=\partial\Omega$. The desired state is denoted by $\bm y_d$, while $\bm y_{\bm u}$ is the unique solution in the transposition sense; see \cite[Definition 2.3]{GongStokes_Tangential1}. The state equations read
\begin{align}\label{Ori_problem2}
	-\nu \Delta\bm y+\nabla p =\bm f \quad  \text{in } \Omega, \qquad
	\nabla\cdot\bm y=0 \quad  \text{in } \Omega, \qquad
	\bm y=\bm u \quad  \text{on }  \Gamma, \qquad
	\int_{\Omega} p=0.
\end{align}
Here $\nu=\mathrm{Re}^{-1}$ is the viscosity coefficient.

The analysis and numerical approximation of optimal control problems constrained by Stokes or Navier--Stokes equations have received sustained attention; representative contributions include \cite{MR1613897,MR1871460,MR1885709,MR2051068,MR2326293,MR2974748,MR3233093,MR3357635,MR3663000}. On the discretization side, divergence-conforming HDG methods for the Stokes and Navier--Stokes equations were initiated in \cite{MR3194122} and subsequently extended in several directions, including exactly divergence-free weak Galerkin and HDG schemes for Stokes, quasi-Newtonian Stokes, and stationary Navier--Stokes equations \cite{MR3549196,MR3671717,2023Chen,MR3833698}. A related line of work uses $H(\mathrm{div})$-conforming velocity spaces without additional Lagrange multipliers; see, for example, \cite{MR339677,MR3511719,MR3826676,MR4015217,MR3941890}.

The present paper is also motivated by our earlier work on boundary control. In \cite{GongStokes_Tangential1} we analyzed an HDG method for tangential boundary control of the Stokes system, but the method was not pressure-robust. In \cite{MR4527345} we proposed a pressure-robust HDG discretization for the same control problem; however, its error analysis required the pressure to belong to $H^1$. The main purpose of the present paper is to remove that regularity assumption while simplifying the discrete formulation.

Our main contributions are as follows.
\begin{itemize}
	\item We introduce a family of $H(\mathrm{div})$-conforming HDG discretizations for the steady Stokes equations based on BDM and RT velocity spaces with either discontinuous or continuous hybrid traces.
	\item The discrete velocities are exactly divergence-free, which yields pressure robustness.
	\item The consistency analysis only requires low pressure regularity, thereby removing the $H^1$-pressure assumption used in \cite{MR4527345}.
	\item For the BDM variants we prove optimal energy-norm estimates and optimal $L^2$-velocity convergence; for the RT variants we prove optimal velocity convergence together with weaker pressure estimates.
	\item We reformulate the Schur-complement discussion in terms of spectral equivalence and apply the BDM discontinuous-trace method to tangential boundary control.
\end{itemize}

The remainder of the paper is organized as follows. In \Cref{sec2} we introduce notation and collect the approximation tools used throughout the analysis. \Cref{sec3,sec4} develop the BDM- and RT-based discontinuous-trace HDG methods and establish well-posedness and a priori error estimates. \Cref{sec5} discusses the corresponding continuous-trace variants. In \Cref{sec6} we prove a spectral equivalence for the Schur complement. \Cref{sec7} applies the BDM discontinuous-trace scheme to the tangential boundary control problem. Finally, \Cref{sec8} presents numerical experiments in two and three space dimensions.
\endgroup

\section{Notation and Preliminary Results}\label{sec2}

\begingroup
In this section we introduce the notation used throughout the paper and collect several standard approximation and trace estimates that will be used repeatedly in the analysis.
\endgroup

\subsection{Notation}
Consider an open polygonal domain $\Omega \subset \mathbb{R}^d$ (where $d$ can be 2 or 3) with a Lipschitz boundary denoted as $\Gamma = \partial \Omega$. We define a bounded domain $\mathcal{D} \subset \mathbb{R}^s$, where $s$ matches the dimension ($s = d$ for $d$ dimensions and $s = d-1$ for $d-1$ dimensions). Within this context, $H^m(\mathcal{D})$ and $H^m_0(\mathcal{D})$ denote the Sobolev spaces over $\mathcal{D}$, with $m$ representing the order of the spaces. The norms and semi-norms corresponding to these spaces are denoted by $\|\cdot\|_{m,\mathcal{D}}$ and $|\cdot|_{m,\mathcal{D}}$, respectively.

We represent the inner product within the Sobolev space $H^m(\mathcal{D})$ as $(\cdot, \cdot)_{m, \mathcal{D}}$, where $(\cdot, \cdot)_{\mathcal{D}}$ is shorthand for $(\cdot, \cdot)_{0, \mathcal{D}}$. In the case where $\mathcal{D}$ corresponds to $\Omega$, we use the notation
$$
\|\cdot\|_m:=\|\cdot\|_{m, \Omega}, \quad|\cdot|_m:=|\cdot|_{m, \Omega}, \quad(\cdot, \cdot):=(\cdot, \cdot)_{\Omega} .
$$

Specifically, when ${\mathcal{D}} \subset \mathbb{R}^{d-1}$, we employ $\langle\cdot, \cdot\rangle_{\mathcal{D}}$ in place of $(\cdot, \cdot)_{\mathcal{D} }$. Vector-valued variables and spaces are denoted using boldface font, such as $\bm u$ representing a velocity vector field. For tensor variables, we adopt capital fonts, for instance, $\mathbb L = \nabla\bm u$ indicating the gradient of the velocity vector field. Scalar variables and spaces are represented in italics, with $p$ signifying a pressure scalar value.

Let $\mathcal{P}_k(\mathcal{D})$ denote polynomials in $d$ variables with a total degree at most $k$, where $k$ is a non-negative integer. Additionally, we require the following spaces:
\begin{align*}    L_0^2(\mathcal{D})&:=\left\{q \in L^2(\mathcal{D}):(q, 1)=0\right\}, \\  \boldsymbol{H}({\rm div}; \mathcal{D})&:=\left\{\bm{v} \in [{L}^2(\mathcal{D})]^d: \nabla \cdot \boldsymbol{v} \in L^2(\mathcal{D})\right\}.
\end{align*}

\begingroup
Consider $\left\lbrace \mathcal{T}_h\right\rbrace_{h>0} $ as a shape-regular sequence of simplicial elements within the polyhedral domain $\Omega$. Given a subdivision $\mathcal{T}_h=\cup\left\lbrace T \right\rbrace $, we represent the collection of its edges or faces as $\mathcal E_h$. Furthermore, we designate $h_T$ as the diameter of each element $T\in\mathcal{T}_h$. We conventionally denote $h$ as the maximum of these $h_T$ values across all $T \in \mathcal{T}_h$. For simplicity, we define:
$$
\mathfrak{h}|_T=h_T ,\quad \eta|_{\partial T}=h_T^{-1}.
$$
\endgroup

The assortment $\mathcal{T}_h$ earns the label `shape-regular' when $h_T\leq C\rho_T$, where $\rho_T $ denotes the diameter of the largest ball that can be inscribed within $T$. Here, the presence of a constant $C=C(\mathcal{T}_h)$ is intrinsic to this definition. To ensure its applicability with $C$ being detached from $h$, we necessitate the existence of a collection of triangulations $\mathcal{T}_h$, each specifically labeled with the overarching parameter $h$.

Employing augmented inequalities, we find immense utility in circumventing the introduction of constants that might vary in different instances and hinge on $h$, $h_T$, and the coefficient $\nu$. Specifically, the notation $a \lesssim b$ signifies $a \leq C b$, where $C>0$ remains untethered from $h$, $h_T$, and $\nu$. In situations demanding emphasis, we will distinctly highlight the presence of the constant $C$.

We further introduce the subsequent inner product and norm, both contingent on the mesh:
\begin{align*}
	(u, v)_{ \mathcal{T}_h} :=\sum_{T \in \mathcal{T}_h}( u, v)_{ T}, \quad	\|u\|_{0, \mathcal{T}_h}^2 :=\sum_{T \in \mathcal{T}_h}\|u\|_{0, T}^2 .
\end{align*}

To streamline the notation and avoid excessive subscripts, we employ succinct notations:
\begin{align*}
	\|u\|_{\mathcal{T}_h}^2	 = \|u\|_{0, \mathcal{T}_h}^2, \quad \|u\|_{m}^2 = \|u\|_{m, \Omega}^2.	
\end{align*}

\begingroup
\subsection{Preliminary Results}
We first record standard inverse and trace estimates.
\begin{lemma}[Inverse and Trace Inequalities{\cite[Lemma 1.46]{di2011mathematical}, \cite[Lemma 1.138]{ern2004theory}}] There exist constants $C$ and $C_{\max}$ that are independent of $h_T$, such that for every $T \in \mathcal{T}_h$ and every polynomial function $v_h$ in $T$, the following inequalities hold:
	\begin{subequations}
		\begin{align}
			|v_h|_{s,T} &\leq C h_T^{m-s}|v_h|_{m,T}, \quad 0 \leq m \leq s, \label{trace_1}\\
			h_T^{\frac{1}{2}}\|v_h\|_{0,\partial T} &\leq C_{\max }\|v_h\|_{0,T}.\label{trace_2}
		\end{align}
	\end{subequations}
	
	Moreover, there exists a constant $C > 0$ that is independent of $h_T$, such that for any $v \in H^1(T)$, the following local trace inequality holds:
	\begin{align}
		\|v\|_{0,\partial T} \leq C(h_T^{-\frac{1}{2}}\|v\|_{0,T}+h_T^{\frac{1}{2}}|v|_{1,T}).
	\end{align}
\end{lemma}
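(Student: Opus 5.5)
The plan is the classical scaling argument. Since every simplex in a shape-regular family is the image of a fixed reference simplex $\widehat T$ under an affine map $F_T(\hat x)=B_T\hat x+b_T$, we have $\|B_T\|\lesssim h_T$ and $\|B_T^{-1}\|\lesssim \rho_T^{-1}\lesssim h_T^{-1}$. We will also use $|\det B_T|\simeq h_T^d$, and the Jacobian of the induced face maps scales like $h_T^{d-1}$. For a function $v$ on $T$ set $\hat v=v\circ F_T$. The standard change of variables gives, for integer $m\ge 0$,
\begin{align*}
|\hat v|_{m,\widehat T}\lesssim \|B_T\|^m|\det B_T|^{-1/2}|v|_{m,T},\qquad
|v|_{m,T}\lesssim \|B_T^{-1}\|^m|\det B_T|^{1/2}|\hat v|_{m,\widehat T},
\end{align*}
together with $\|v\|_{0,\partial T}\simeq h_T^{(d-1)/2}\|\hat v\|_{0,\partial\widehat T}$.

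For the inverse inequality \eqref{trace_1}, note that the space $\mathcal P_k(\widehat T)$ is finite dimensional. On the quotient $\mathcal P_k(\widehat T)/\mathcal P_{m-1}(\widehat T)$, both $|\cdot|_{m,\widehat T}$ and $|\cdot|_{s,\widehat T}$ vanish on $\mathcal P_{m-1}$, and $|\cdot|_{m,\widehat T}$ is a norm there. The seminorm $|\cdot|_{s,\widehat T}$, being a seminorm on a finite-dimensional space, is therefore bounded by it: $|\hat v|_{s,\widehat T}\le \widehat C|\hat v|_{m,\widehat T}$. The constant depends only on $k$, $s$, $m$ and $\widehat T$. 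Mapping back, we get
\begin{align*}
|v_h|_{s,T}\lesssim h_T^{-s}h_T^{d/2}|\hat v|_{s,\widehat T}\lesssim h_T^{-s}h_T^{d/2}|\hat v|_{m,\widehat T}\lesssim h_T^{m-s}|v_h|_{m,T}.
\end{align*}
The main care needed is to track that only shape regularity enters through $\|B_T\|\,\|B_T^{-1}\|\lesssim 1$.

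For \eqref{trace_2} we argue the same way. On the finite-dimensional space $\mathcal P_k(\widehat T)$, $\|\cdot\|_{0,\partial\widehat T}$ is a seminorm, hence bounded by the norm $\|\cdot\|_{0,\widehat T}$. Scaling then gives
\begin{align*}
\|v_h\|_{0,\partial T}\lesssim h_T^{(d-1)/2}\|\hat v\|_{0,\widehat T}\lesssim h_T^{-1/2}\|v_h\|_{0,T},
\end{align*}
which defines $C_{\max}$.

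For the continuous trace inequality, start from the trace theorem on the reference element, $\|\hat v\|_{0,\partial\widehat T}\lesssim \|\hat v\|_{0,\widehat T}+|\hat v|_{1,\widehat T}$, which holds for $\hat v\in H^1(\widehat T)$. Scaling each term gives $h_T^{-(d-1)/2}\|v\|_{0,\partial T}\lesssim h_T^{-d/2}\|v\|_{0,T}+h_T^{1-d/2}|v|_{1,T}$. Multiplying by $h_T^{(d-1)/2}$ yields the claim.

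I expect no genuine obstacle. The only delicate point is making sure that every constant depends on $T$ solely through the shape-regularity constant. This is guaranteed because all comparisons are made on the single reference simplex $\widehat T$, and because the geometric quantities satisfy $\|B_T^{-1}\|\le h_{\widehat T}/\rho_T$.
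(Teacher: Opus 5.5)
Your proof is correct. It is the standard affine-scaling argument: map to the reference simplex, bound $|\cdot|_{s,\widehat T}$ by $|\cdot|_{m,\widehat T}$ on $\mathcal P_k(\widehat T)/\mathcal P_{m-1}(\widehat T)$ (and $\|\cdot\|_{0,\partial\widehat T}$ by $\|\cdot\|_{0,\widehat T}$ on $\mathcal P_k(\widehat T)$), and use the trace theorem on $\widehat T$, with constants depending only on $k$, $d$ and the shape-regularity constant. The paper gives no proof of its own beyond citing Di Pietro--Ern and Ern--Guermond, and your argument is the standard textbook proof of these results.
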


For all $E\in\mathcal{E}_h$ and $T\in\mathcal{T}_h$, we denote by ${\Pi}_{k}^{\partial}:[L^2(E)] \rightarrow[\mathcal{P}_{k}(E)]$ and ${\Pi}_{k}^{o}:[L^2(T)]\rightarrow[\mathcal{P}_{k}(T)]$ the scalar $L^2$ projections. For vector- and tensor-valued fields these projections are understood componentwise. More precisely, for every $v\in L^2(E)$ and $p\in L^2(T)$, ${\Pi}_{k}^{\partial} v$ and ${\Pi}_{k}^{o} p$ are the unique elements of $\mathcal{P}_{k}(E)$ and $\mathcal{P}_{k}(T)$, respectively, that satisfy:
\begin{align}\label{L2}
	\langle \Pi_k^{\mathrm{\partial}} {v}, w_k\rangle_{E}&=\langle {v}, w_k \rangle_{E},\quad \forall w_k\in\mathcal{P}_{k}(E), E\in \mathcal{E}_h {,}\\
	( \Pi_k^{\mathrm{o}} {p}, q_k)_{ T} &=(  {p}, q_{k})_{T},\quad \forall q_k\in\mathcal{P}_{k}(T), T\in\mathcal{T}_h.
\end{align}
\endgroup

\begin{lemma}[Approximation Results\cite{ern2004theory,girault2012finite}]\label{projection properties}
	For any $T\in \mathcal{T}_h$, $E\in\mathcal E_h$, and $k\geq1$ with $1\leq m\leq k+1$, the following interpolation estimates and bounds hold.
	\begin{subequations}
		\begin{align}
			\|{\Pi}^o_k v\|_{0, T} &\leq\|v\|_{0, T}, &&\forall v \in L^2(T),\label{2.7} \\
			\|{\Pi}^{\partial}_k v\|_{0, E} &\leq\|v\|_{0, E}, &&\forall v \in L^2(E),\\
			\|v-{\Pi}^{\partial}_k v\|_{0, \partial T} &\lesssim h_T^{m-1 / 2}|v|_{m, T}, &&\forall v \in H^m(T), \\
			\left\|v-{\Pi}^o_k v\right\|_{0, T}+h_T\left|v-{\Pi}^o_k v\right|_{1, T} &\lesssim h_T^m|v|_{m, T}, &&\forall v \in H^m(T).\label{2.4'}
		\end{align}
	\end{subequations}
\end{lemma}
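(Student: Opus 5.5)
The four estimates are classical, and I would prove them by the usual route: orthogonality of $L^2$ projections for the stability bounds, and a Bramble--Hilbert argument on a reference element combined with the trace inequality of the previous lemma for the approximation bounds.

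\textbf{Stability.} By the defining relations \eqref{L2}, $\Pi_k^o v$ is the orthogonal projection of $v$ onto $\mathcal P_k(T)$ in $L^2(T)$. Hence
\[
\|v\|_{0,T}^2=\|\Pi_k^o v\|_{0,T}^2+\|v-\Pi_k^o v\|_{0,T}^2 ,
\]
which gives \eqref{2.7}. The bound for $\Pi_k^\partial$ on $E$ follows in exactly the same way.

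\textbf{Interior estimate \eqref{2.4'}.} I would first reduce to the reference simplex $\hat T$ through the affine map $F_T$. Because $F_T$ is affine, $L^2$ projection onto $\mathcal P_k$ commutes with pullback. On $\hat T$, the operator $\hat v\mapsto \hat v-\hat\Pi_k^o\hat v$ is bounded from $H^m(\hat T)$ to $H^1(\hat T)$, since $\mathcal P_k$ is finite dimensional and all norms on it are equivalent. It also annihilates $\mathcal P_{m-1}\subset\mathcal P_k$, because $m\le k+1$. The Bramble--Hilbert lemma then gives
\[
\|\hat v-\hat\Pi\hat v\|_{1,\hat T}\lesssim |\hat v|_{m,\hat T}.
\]
Scaling back with shape regularity ($h_T\lesssim\rho_T$) produces the factors $h_T^m$ and $h_T^{m-1}$. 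The one point needing care is that the implied constant must depend only on the shape-regularity constant; this holds because the reference estimate is fixed and the scaling constants are controlled by $h_T/\rho_T$.

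\textbf{Face estimate.} For each face $E\subset\partial T$, $\Pi_k^\partial v$ is the best $L^2(E)$ approximation of $v|_E$ from $\mathcal P_k(E)$. The trace $(\Pi_k^o v)|_E$ lies in $\mathcal P_k(E)$, so
\[
\|v-\Pi_k^\partial v\|_{0,E}\le\|v-\Pi_k^o v\|_{0,E}.
\]
Summing over the faces of $T$ and applying the local trace inequality to $w=v-\Pi_k^o v\in H^1(T)$ gives
\[
\|v-\Pi_k^\partial v\|_{0,\partial T}\lesssim h_T^{-1/2}\|w\|_{0,T}+h_T^{1/2}|w|_{1,T}.
\]
Inserting \eqref{2.4'} bounds the right-hand side by $h_T^{m-1/2}|v|_{m,T}$.

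The only nontrivial ingredient is the Bramble--Hilbert/scaling step. Everything else reduces to orthogonality and the previously stated trace inequality. The assumption $m\ge1$ is exactly what makes $v\in H^1(T)$ available for that trace inequality.
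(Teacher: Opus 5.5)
Your proof is correct: orthogonality gives the two stability bounds, Bramble--Hilbert with affine scaling gives the interior estimate, and face-wise best approximation by $(\Pi_k^o v)|_E$ followed by the local trace inequality applied to $v-\Pi_k^o v$ gives the boundary estimate. The paper does not prove this lemma itself but cites Ern--Guermond and Girault--Raviart, where these bounds are obtained by essentially this same standard argument.
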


\begin{lemma}[The BDM Projection{\cite[page 113]{boffi2013mixed}}{\cite[page 39]{du2019invitation}}]\label{BDM}
	For any $T\in\mathcal{T}_h$ and $\bm v\in[H^1(T)]^d$, there exists a unique $\bm\Pi_k^{\rm BDM}\bm v\in[\mathcal P_k(T)]^d,k\geq1,$ such that
	\begin{align}
		& \left\langle \bm\Pi_k^{\mathrm{{B}DM}} \boldsymbol{v} \cdot \boldsymbol{n}, w_k\right\rangle_{\partial T}=\left\langle\boldsymbol{v} \cdot \boldsymbol{n}, w_k\right\rangle_{\partial T}, && \forall w_k \in \mathcal{P}_k(E), E \subset \partial T, \\
		& \left(\bm\Pi_k^{\mathrm{{B}DM}} \boldsymbol{v}, \nabla p_{k-1}\right)_T=\left(\boldsymbol{v}, \nabla p_{k-1}\right)_T, && \forall p_{k-1} \in\mathcal P_{k-1}(T),\label{2.6} \\
		& \left(\bm\Pi_k^{\mathrm{{B}DM}} \boldsymbol{v}, {\bf curl}\left(b_T p_{k-2}\right)\right)_T=\left(\boldsymbol{v}, {\bf curl}\left(b_T p_{k-2}\right)\right)_T, && \forall p_{k-2} \in \mathcal P_{k-2}(T), \label{2.12}
	\end{align}
	when $d=2$ and $k \geq 2$, and
	$$
	\left(\boldsymbol{\Pi}_k^{\rm BDM} \boldsymbol{v}, \boldsymbol{w}\right)_T=(\boldsymbol{v}, \boldsymbol{w})_T, \quad \forall \boldsymbol{w} \in \boldsymbol{P}_k^*(T), 
	$$
	when $d=3$, where $b_T=\lambda_1\lambda_2\lambda_3$ in \eqref{2.12} is the bubble function on $T$, and
	$$
	\boldsymbol{P}_k^*(T):=\left\{\boldsymbol{v} \in\left[\mathcal P_k(T)\right]^d: \nabla \cdot \boldsymbol{v}=0, \boldsymbol{v} \cdot \boldsymbol{n}_E=0, \forall E \subset \partial T\right\}.
	$$
	Moreover, it holds the following interpolation approximation:
	\begin{align}\label{BDM error}
		\|\boldsymbol{v}-\boldsymbol{\Pi}_k^{\rm BDM} \boldsymbol{v}\|_{0, T} \lesssim h_T^m|\boldsymbol{v}|_{m, T}, \quad \forall \boldsymbol{v} \in\left[H^m(T)\right]^d,\quad 1 \leq m \leq k+1.
	\end{align}
	Furthermore, $\boldsymbol{\Pi}_k^{\rm BDM}$ holds the commutativity property:
	\begin{align}\label{commutativity}
		&(\nabla\cdot\boldsymbol{\Pi}_k^{\rm BDM}\bm v,p)_T=(\nabla\cdot\bm v,p)_T,\quad\forall p\in\mathcal P_{k-1}(T).
	\end{align}
\end{lemma}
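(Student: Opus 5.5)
The proof has three parts: unisolvence of the defining degrees of freedom, the interpolation estimate \eqref{BDM error}, and the commutativity property \eqref{commutativity}.

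\emph{Unisolvence.} First count the degrees of freedom and compare with $\dim[\mathcal P_k(T)]^d$. For $d=2$:
\begin{itemize}
\item the edge moments give $3(k+1)$ conditions;
\item the gradient moments give $\dim\mathcal P_{k-1}-1$ conditions, since constants give $\nabla p=0$;
\item the curl-bubble moments give $\dim\mathcal P_{k-2}$ conditions.
\end{itemize}
These sum to $(k+1)(k+2)=\dim[\mathcal P_k]^2$. For $d=3$, use the splitting of $[\mathcal P_k(T)]^3$ into face data, gradients, and $\boldsymbol P_k^*(T)$; the count then follows from the exactness of the discrete de Rham sequence with boundary conditions.

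It then suffices to show that if all moments of $\bm v_h\in[\mathcal P_k(T)]^d$ vanish, then $\bm v_h=0$. Since $\bm v_h\cdot\bm n|_E\in\mathcal P_k(E)$ and its face moments vanish, $\bm v_h\cdot\bm n=0$ on $\partial T$. Integrating by parts,
\[
(\nabla\cdot\bm v_h,q)_T=-(\bm v_h,\nabla q)_T+\langle \bm v_h\cdot\bm n,q\rangle_{\partial T}=0 \quad \forall q\in\mathcal P_{k-1}(T).
\]
Because $\nabla\cdot\bm v_h\in\mathcal P_{k-1}(T)$, this gives $\nabla\cdot\bm v_h=0$, so $\bm v_h\in\boldsymbol P_k^*(T)$. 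In 3D, the interior moments tested against $\bm w=\bm v_h$ itself then force $\bm v_h=0$. In 2D, a divergence-free field with zero normal trace is $\mathbf{curl}\,\psi$, with $\psi$ vanishing on $\partial T$ and of degree $k+1$, hence $\psi=b_Tp_{k-2}$. Testing the curl-bubble moments with this $p_{k-2}$ gives $\|\bm v_h\|_T^2=0$. This is the main technical step: it requires the fact that a stream function vanishing on $\partial T$ is divisible by the bubble $b_T$.

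\emph{Commutativity.} Integrate by parts against $p\in\mathcal P_{k-1}(T)$:
\[
(\nabla\cdot(\bm v-\bm\Pi_k^{\rm BDM}\bm v),p)_T=-(\bm v-\bm\Pi_k^{\rm BDM}\bm v,\nabla p)_T+\langle(\bm v-\bm\Pi_k^{\rm BDM}\bm v)\cdot\bm n,p\rangle_{\partial T}.
\]
The volume term vanishes by \eqref{2.6}. The boundary term vanishes by the face moments, because $p|_E\in\mathcal P_{k-1}(E)\subset\mathcal P_k(E)$.

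\emph{Approximation.} Map to the reference element using the contravariant Piola transform. This transform preserves normal moments and gradient moments, and maps the bubble-curl and $\boldsymbol P_k^*$ spaces to their reference counterparts. Consequently the projection commutes with the transform. On the reference element the projection is bounded from $[H^1]^d$ into $[L^2]^d$, since the face moments are controlled by the trace theorem. It also reproduces $[\mathcal P_k]^d$, by unisolvence. The Bramble--Hilbert lemma then gives $\|\hat{\bm v}-\hat{\bm\Pi}\hat{\bm v}\|\lesssim|\hat{\bm v}|_{m}$ for $1\le m\le k+1$. Scaling back with shape regularity yields \eqref{BDM error}. Alternatively, one can cite \cite{boffi2013mixed} directly.
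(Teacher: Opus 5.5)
The paper gives no proof of this lemma; it only cites the textbooks. Your unisolvence argument (vanishing moments force $\bm v_h\cdot\bm n=0$, then $\nabla\cdot\bm v_h=0$, then $\bm v_h\in\boldsymbol P_k^*(T)$, then the bubble-divisibility step in 2D) is correct, and so is your commutativity argument.

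\textbf{The gap: the projection does not commute with the Piola map.} The gap is the sentence asserting that the projection commutes with the contravariant Piola map $\mathcal P\hat{\bm v}=(\det B_T)^{-1}B_T\,\hat{\bm v}\circ F_T^{-1}$.
\begin{itemize}
\item What is true: normal moments are Piola-invariant. Gradient moments are also invariant, because gradients transform covariantly and this cancels the Piola factor. And $\mathcal P$ does map $\mathbf{curl}(b_{\hat T}\mathcal P_{k-2})$ and $\boldsymbol P_k^*(\hat T)$ onto their counterparts on $T$.
\item What fails: the third set of conditions is an $L^2$ pairing between two contravariantly transformed fields, and $(\mathcal P\hat{\bm v},\mathcal P\hat{\bm w})_T=|\det B_T|^{-1}(\hat{\bm v},B_T^{\top}B_T\hat{\bm w})_{\hat T}$. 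So these conditions pull back to $(\hat{\bm z}-\hat{\bm v},B_T^{\top}B_T\hat{\bm w})_{\hat T}=0$, not to the reference conditions.
\item A concrete instance: take $d=k=2$ and the reference triangle with vertices $(0,0),(1,0),(0,1)$. The linear part of $\mathbf{curl}\,b_{\hat T}$ is a multiple of $(\hat x,-\hat y)^{\top}$. Hence $B_T^{\top}B_T\,\mathbf{curl}\,b_{\hat T}\in\nabla\mathcal P_1+\mathrm{span}\{\mathbf{curl}\,b_{\hat T}\}$ only if $B_T^{\top}B_T$ is a multiple of the identity.
\item Consequence: on every element not similar to $\hat T$, the pulled-back operator is not $\hat{\bm\Pi}$. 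A single reference operator combined with Bramble--Hilbert therefore does not cover $\bm\Pi_k^{\rm BDM}$.
\end{itemize}

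\textbf{How to repair it.} The fix needs one more observation, which your unisolvence argument already supplies.
\begin{itemize}
\item Best-approximation characterization: the set of $\bm w\in[\mathcal P_k(T)]^d$ with the same normal and gradient moments as $\bm v$ is an affine translate of $\boldsymbol P_k^*(T)$. In 2D, $\boldsymbol P_k^*(T)=\mathbf{curl}(b_T\mathcal P_{k-2}(T))$. The third condition is $L^2(T)$-orthogonality to that space, so $\bm\Pi_k^{\rm BDM}\bm v$ is the $L^2(T)$-best approximation of $\bm v$ from this affine set.
\item Comparison operator: set $\tilde{\bm\Pi}_T:=\mathcal P\,\hat{\bm\Pi}\,\mathcal P^{-1}$. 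It is equivariant by construction, so your Piola/Bramble--Hilbert/scaling argument gives $\|\bm v-\tilde{\bm\Pi}_T\bm v\|_{0,T}\lesssim h_T^m|\bm v|_{m,T}$. Because normal and gradient moments are Piola-invariant, $\tilde{\bm\Pi}_T\bm v$ lies in the affine set above.
\item Conclusion: by Pythagoras, $\|\bm v-\bm\Pi_k^{\rm BDM}\bm v\|_{0,T}\le\|\bm v-\tilde{\bm\Pi}_T\bm v\|_{0,T}$, which gives the estimate.
\item Alternative: carry the metric $B_T^{\top}B_T/\|B_T\|^2$ as a parameter of the reference operator. Shape regularity confines it to a compact set of SPD matrices, and continuity plus compactness gives uniform bounds.
\end{itemize}

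\textbf{A smaller point on the 3D count.} What you actually need is that $\bm w\mapsto(\bm w\cdot\bm n|_{\partial T},\nabla\cdot\bm w)$ maps $[\mathcal P_k(T)]^3$ onto the pairs in $\prod_{E\subset\partial T}\mathcal P_k(E)\times\mathcal P_{k-1}(T)$ with equal total integrals. This combines surjectivity of the normal trace with surjectivity of the divergence on fields with vanishing normal trace. Rank--nullity with kernel $\boldsymbol P_k^*(T)$ then yields exactly the number of degrees of freedom.
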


\begingroup
\section{H(div)-conforming HDG method with discontinuous traces}\label{sec3}
In this section we introduce the first $H(\mathrm{div})$-conforming HDG scheme, based on BDM velocity spaces with discontinuous traces, and prove its well-posedness and a priori error estimates.
\subsection{The finite element space and the HDG method}\label{first Scheme}
First, for $k\ge 1$, we define
\begin{align*}
	\bm{\Sigma}_h&=\{\mathbb L_h\in \bm L^2(\Omega):\mathbb L_h|_T\in [\mathcal P_{k-1}(T)]^{d\times d},\forall T\in\mathcal T_h\},\\
	\bm {V^g}_h&=\{\bm v_h\in  {\bm H({\rm div};\Omega)}: \bm v_h|_T\in [\mathcal P_k(T)]^d,\forall T\in\mathcal T_h,\bm v_h\cdot \bm n |_{\partial \Omega}=\Pi_{k}^{\partial} (\bm g\cdot\bm n)\},\\
	\widehat{\bm V}_h^{\bm g}&=\{\widehat{\bm v}_h\in\bm L^2(\mathcal E_h):\widehat{\bm v}_h|_E\in [\mathcal P_{k-1}(E)]^d,\forall E\in\mathcal E_h,	\widehat{\bm v}_h|_{\Gamma}=\bm{\Pi}_{k-1}^{\partial}\bm g
	\},\\
	Q_h&=\{q_h\in L_0^2(\Omega):q_h|_T\in \mathcal{P}_{k-1}(T),\forall T\in\mathcal T_h\}.
\end{align*}
\endgroup

Next, we introduce $\mathbb L = \nu \nabla\bm u$ into equation \eqref{Ori_problem2} to derive a formulation involving velocity gradient, velocity, and pressure.
\begin{subequations}\label{2.2}
	\begin{align}
		\nu^{-1}\mathbb L -\nabla \bm u&=0&\text{in }\Omega,\label{2.2a}\\
		-\nabla\cdot\mathbb L+\nabla p&=\bm f&\text{in }\Omega,\\
		\nabla\cdot\bm u&=0&\text{in }\Omega,\\
		\bm u&=\bm g&\text{on }\partial\Omega.
	\end{align}
\end{subequations}

\begingroup
Subsequently, we introduce the globally divergence-free HDG method for \eqref{2.2}, aiming to find a solution $(\mathbb L_h, \bm u_h, \widehat{\bm u}_h, q_h) \in \bm{\Sigma}_h \times \bm{V^g}_h \times \widehat{\bm V}_h^{\bm g} \times Q_h$ that satisfies:
\begin{subequations}\label{2.3}
	\begin{align}
		\nu^{-1}(\mathbb L_h,\mathbb G_h)_{\mathcal T_h}+(\bm u_h,\nabla\cdot\mathbb G_h)_{\mathcal T_h}
		-\langle \widehat{\bm u}_h ,\mathbb G_h\bm n \rangle_{\partial\mathcal T_h}&= 0,\label{2.3a}\\
		-(\nabla\cdot\mathbb L_h,\bm v_h)_{\mathcal T_h}
		+\langle\mathbb L_h\bm n,\widehat{\bm v}_h \rangle_{\partial\mathcal T_h} -(p_h,\nabla \cdot\bm v_h)_{\mathcal T_h} \nonumber
		&\\
		+\nu\langle \eta(\bm{\Pi}_{k-1}^{\partial}\bm u_h-\widehat{\bm u}_h ),
		\bm{\Pi}_{k-1}^{\partial}\bm v_h-\widehat{\bm v}_h
		\rangle_{\partial\mathcal T_h}
		&=(\bm f,\bm v_h)_{\mathcal T_h},\\
		(\nabla\cdot\bm u_h,q_h)_{\mathcal T_h}&=0
	\end{align}
	holds for all $(\mathbb G_h,\bm v_h,\widehat{\bm v}_h,q_h)\in \bm{\Sigma}_h\times\bm V_h^{\bm 0}\times \widehat{\bm V}_h^{\bm 0}\times Q_h$.
\end{subequations}
To simplify the presentation, we define 
\begin{align*}
	&\mathscr B(\mathbb L_h,\bm u_h,\widehat{\bm u}_h,p_h;
	\mathbb G_h,\bm v_h,\widehat{\bm v}_h, q_h)\\
	&\qquad:= 	\nu^{-1}(\mathbb L_h,\mathbb G_h)_{\mathcal T_h}+(\bm u_h,\nabla\cdot\mathbb G_h)_{\mathcal T_h}
	-\langle \widehat{\bm u}_h,\mathbb G_h\bm n \rangle_{\partial\mathcal T_h}\\
	&\qquad\quad	-(\nabla\cdot\mathbb L_h,\bm v_h)_{\mathcal T_h}	+\langle\mathbb L_h\bm n,\widehat{\bm v}_h \rangle_{\partial\mathcal T_h} -(p_h,\nabla \cdot\bm v_h)_{\mathcal T_h}\\
	&\qquad\quad	+\nu\langle \eta(\bm{\Pi}_{k-1}^{\partial}\bm u_h-\widehat{\bm u}_h),
	\bm{\Pi}_{k-1}^{\partial}\bm v_h-\widehat{\bm v}_h
	\rangle_{\partial\mathcal T_h}
	+ (\nabla\cdot\bm u_h,q_h)_{\mathcal T_h}.
\end{align*}

The HDG formulation \eqref{2.3} can be succinctly described as follows: find  $(\mathbb L_h,\bm u_h,\widehat{\bm u}_h,q_h)\in \bm{\Sigma}_h\times\bm {V^g}_h\times \widehat{\bm V}_h^{\bm g}\times Q_h$ such that 
\begin{align}\label{2.4}
	\mathscr B(\mathbb L_h,\bm u_h,\widehat{\bm u}_h,p_h;
	\mathbb G_h,\bm v_h,\widehat{\bm v}_h,q_h)=	(\bm f,\bm v_h)_{\mathcal T_h}
\end{align}
holds for all $(\mathbb G_h,\bm v_h,\widehat{\bm v}_h,q_h)\in \bm{\Sigma}_h\times\bm V_h^{\bm 0}\times \widehat{\bm V}_h^{\bm 0}\times Q_h$.

\subsection{Well-Posedness of the Scheme}
We next prove well-posedness of the HDG scheme. To this end we introduce, for $(\mathbb G_h,\bm v_h,\widehat{\bm v}_h)\in\bm{\Sigma}_h\times\bm V_h^{\bm 0}\times \widehat{\bm V}_h^{\bm 0}$, the norm
\begin{align}\label{norm1}
	\interleave(\mathbb G_h,\bm v_h,\widehat{\bm v}_h) \interleave^2
	:=\nu^{-1}\|\mathbb G_h\|_{\mathcal T_h}^2
	+\nu\|\nabla\bm v_h\|_{\mathcal T_h}^2
	+\nu\|\eta^{1/2}(\bm\Pi_{k-1}^{\partial}\bm v_h-\widehat{\bm v}_h)\|_{\partial\mathcal T_h}^2.
\end{align}

In this context, it's worth recalling that $\eta|_{\partial T}=h_T^{-1}$.

\begin{lemma}\label{check norm1}
	The semi-norm $\interleave\cdot\interleave$ defined by \eqref{norm1} is a norm  {on $\bm{\Sigma}_h\times\bm V_h^{\bm 0}\times \widehat{\bm V}_h^{\bm 0}$.}
\end{lemma}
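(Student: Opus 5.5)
Since $\interleave\cdot\interleave$ is the square root of a sum of squared seminorms induced by inner products, it is a seminorm: it is nonnegative, absolutely homogeneous, and satisfies the triangle inequality. So it only remains to show definiteness. Assume
\[
\interleave(\mathbb G_h,\bm v_h,\widehat{\bm v}_h)\interleave=0
\]
for some $(\mathbb G_h,\bm v_h,\widehat{\bm v}_h)\in\bm{\Sigma}_h\times\bm V_h^{\bm 0}\times \widehat{\bm V}_h^{\bm 0}$. We will show all three components vanish.

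\textbf{Step 1: the first two terms.} Because $\nu>0$, the first term gives $\mathbb G_h=0$. The second term gives $\nabla\bm v_h=0$ elementwise, so $\bm v_h|_T$ equals a constant vector $\bm c_T$ on each $T\in\mathcal T_h$.

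\textbf{Step 2: the jump term.} Since $\eta>0$, the third term gives $\widehat{\bm v}_h=\bm\Pi_{k-1}^{\partial}\bm v_h$ on every $\partial T$. A constant lies in $[\mathcal P_{k-1}(E)]^d$ because $k\ge1$, so $\bm\Pi_{k-1}^{\partial}\bm c_T=\bm c_T$. Hence $\widehat{\bm v}_h|_E=\bm c_T$ for every face $E\subset\partial T$.

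\textbf{Step 3: single-valuedness of the trace.} This is the key step. Take an interior face $E$ shared by $T_1$ and $T_2$. The trace $\widehat{\bm v}_h$ is single-valued on $E$, so $\bm c_{T_1}=\widehat{\bm v}_h|_E=\bm c_{T_2}$. The mesh is connected, so $\bm v_h$ is one global constant $\bm c$. Now take a boundary face $E\subset\Gamma$. There $\widehat{\bm v}_h|_E=\bm\Pi_{k-1}^{\partial}\bm 0=\bm 0$, so $\bm c=\bm 0$.

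\textbf{Conclusion.} We obtain $\bm v_h=\bm 0$ and therefore $\widehat{\bm v}_h=\bm\Pi_{k-1}^{\partial}\bm v_h=\bm 0$. Together with $\mathbb G_h=0$, this proves definiteness.

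There is no real obstacle in this argument. The only point needing care is the propagation in Step 3, which relies on $\widehat{\bm V}_h^{\bm 0}$ being single-valued on faces and vanishing on $\Gamma$. Notably, the $H(\mathrm{div})$ conformity of $\bm v_h$ is not needed.
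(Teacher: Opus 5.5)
Your proof is correct and follows the same route as the paper. The three terms vanishing force $\mathbb G_h=0$, an elementwise constant $\bm v_h$, and $\widehat{\bm v}_h=\bm\Pi_{k-1}^{\partial}\bm v_h$ on every $\partial T$; single-valuedness of $\widehat{\bm v}_h$ on interior faces together with $\widehat{\bm v}_h=\bm 0$ on $\Gamma$ then gives $\bm v_h=\bm 0$. The only differences are the order of the argument (the paper starts at boundary elements and propagates inward, while you first glue the elementwise constants into one global constant and then use $\Gamma$) and your explicit remark that $\bm\Pi_{k-1}^{\partial}$ fixes constants because $k\ge 1$, a step the paper leaves implicit.
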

\begin{proof}
	Since $\interleave\cdot\interleave$ is a semi-norm, our goal is to show that $\interleave(\mathbb G_h, \bm v_h, \widehat{\bm v}_h) \interleave^2 = 0$ implies that $\mathbb G_h$, $\bm v_h$, and $\widehat{\bm v}_h$ are all equal to $\bm 0$. We begin by assuming $\interleave(\mathbb G_h, \bm v_h, \widehat{\bm v}_h) \interleave^2 = 0$. This assumption leads to the following conclusions:
	\begin{align*}
		\mathbb G_h = \nabla \bm v_h = \bm 0 \textup{ on each } T\in \mathcal T_h  \textup{ and  }\bm\Pi_{k-1}^{\partial}\bm v_h - \widehat{\bm v}_h=\bm 0 \textup{ on }\partial T. 
	\end{align*}
	
	First, when $\nabla\bm v_h = \bm 0$, it implies that $\bm v_h$ is constant on each element $T$. This, combined with $\bm\Pi_{k-1}^{\partial}\bm v_h = \widehat{\bm v}_h$, further implies that $\widehat{\bm v}_h$ is also constant on $\partial T$. Second, since $\widehat{\bm v}_h = 0$ on $\partial \Omega$, this indicates that $\bm v_h$ is zero on elements that share at least one face or one edge with the boundary.
	
	Since $\widehat{\bm v}_h$ is single-valued on the interior face or edge, we can conclude that $\bm v_h=0$ on all interior elements. It is clear that if $\|\mathbb G_h\|_{\mathcal T_h}=0$, it implies that $\mathbb G_h=\bm 0$. 
\end{proof}

The next theorem gives the inf-sup condition for the bilinear form $\mathscr B$.
\begin{theorem}\label{Theorem 2.3}
	{ For all $(\mathbb L_h,\bm u_h,\widehat{\bm u}_h,p_h)\in \bm{\Sigma}_h \times \bm V_h^{\bm 0} \times \widehat{\bm V}_h^{\bm 0} \times Q_h$, there} exists a constant $\beta>0$ such that
	\begin{subequations}
		\begin{align}
			\sup_{\bm 0\neq (\mathbb G_h,\bm v_h,\widehat{\bm v}_h,q_h)\in \bm{\Sigma}_h\times\bm V_h^{\bm 0}\times \widehat{\bm V}_h^{\bm 0}\times Q_h}\frac{\mathscr B(\mathbb L_h,\bm u_h,\widehat{\bm u}_h,p_h;
				\mathbb G_h,\bm v_h,\widehat{\bm v}_h,q_h)}{	\interleave(\mathbb G_h,\bm v_h,\widehat{\bm v}_h)\interleave+\|q_h\|_{\mathcal T_h}}
			\ge \beta( 	\interleave(\mathbb L_h,\bm u_h,\widehat{\bm u}_h)\interleave+\|p_h\|_{\mathcal T_h}  ),\label{BDMinfsup1}\\
			\sup_{\bm 0\neq (\mathbb G_h,\bm v_h,\widehat{\bm v}_h,q_h)\in \bm{\Sigma}_h\times\bm V_h^{\bm 0}\times \widehat{\bm V}_h^{\bm 0}\times Q_h}\frac{	\mathscr B(
				\mathbb G_h,\bm v_h,\widehat{\bm v}_h,q_h;\mathbb L_h,\bm u_h,\widehat{\bm u}_h,p_h)}{	\interleave(\mathbb G_h,\bm v_h,\widehat{\bm v}_h)\interleave+\|q_h\|_{\mathcal T_h}}
			\ge \beta( 	\interleave(\mathbb L_h,\bm u_h,\widehat{\bm u}_h)\interleave+\|p_h\|_{\mathcal T_h}  ).\label{BDMinfsup2}
		\end{align}	
	\end{subequations}
\end{theorem}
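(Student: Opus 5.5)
The plan is the standard two-stage argument for saddle-point problems: first get coercivity of $\mathscr B$ on the diagonal, then recover the pressure from a discrete inf-sup condition for the pair $(\bm V_h^{\bm 0},Q_h)$, and combine the two by a suitable choice of test function. The second estimate \eqref{BDMinfsup2} will follow from the same construction, since $\mathscr B$ is skew-symmetric in the off-diagonal coupling terms. Alternatively, in finite dimensions \eqref{BDMinfsup1} already implies \eqref{BDMinfsup2} with the same $\beta$.

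\textbf{Step 1: energy identity.} Test with $(\mathbb G_h,\bm v_h,\widehat{\bm v}_h,q_h)=(\mathbb L_h,\bm u_h,\widehat{\bm u}_h,p_h)$. The terms $(\bm u_h,\nabla\cdot\mathbb L_h)-\langle\widehat{\bm u}_h,\mathbb L_h\bm n\rangle$ cancel against $-(\nabla\cdot\mathbb L_h,\bm u_h)+\langle \mathbb L_h\bm n,\widehat{\bm u}_h\rangle$. The pressure terms $-(p_h,\nabla\cdot\bm u_h)+(\nabla\cdot\bm u_h,p_h)$ also cancel. This leaves
\[
\mathscr B(\cdots;\mathbb L_h,\bm u_h,\widehat{\bm u}_h,p_h)=\nu^{-1}\|\mathbb L_h\|^2_{\mathcal T_h}+\nu\|\eta^{1/2}(\bm\Pi^\partial_{k-1}\bm u_h-\widehat{\bm u}_h)\|^2_{\partial\mathcal T_h}.
\]

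\textbf{Step 2: control of $\nu\|\nabla\bm u_h\|^2$.} Test with $\mathbb G_h=\nu\nabla\bm u_h\in\bm\Sigma_h$, which is allowed because $\nabla\bm u_h|_T\in[\mathcal P_{k-1}]^{d\times d}$. Integrating by parts elementwise in \eqref{2.3a}-type terms gives
\[
(\bm u_h,\nabla\cdot\mathbb G_h)-\langle\widehat{\bm u}_h,\mathbb G_h\bm n\rangle=-\nu\|\nabla\bm u_h\|^2+\nu\langle \bm\Pi^\partial_{k-1}\bm u_h-\widehat{\bm u}_h,\nabla\bm u_h\bm n\rangle_{\partial\mathcal T_h}.
\]
Here we used that $\nabla\bm u_h\bm n|_E\in\mathcal P_{k-1}(E)$, so $\bm u_h$ may be replaced by $\bm\Pi^\partial_{k-1}\bm u_h$. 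Choosing the sign $\mathbb G_h=-\nu\nabla\bm u_h$ and applying \eqref{trace_2} together with Young's inequality, we obtain
\[
\mathscr B(\cdots;-\nu\nabla\bm u_h,0,0,0)\ge \tfrac{\nu}{2}\|\nabla\bm u_h\|^2-C\big(\nu^{-1}\|\mathbb L_h\|^2+\nu\|\eta^{1/2}(\bm\Pi^\partial_{k-1}\bm u_h-\widehat{\bm u}_h)\|^2\big).
\]
The test function has norm $\interleave(\nu\nabla\bm u_h,0,0)\interleave\lesssim\nu^{1/2}\|\nabla\bm u_h\|$.

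\textbf{Step 3: pressure (the main obstacle).} We need $\bm w_h\in\bm V_h^{\bm 0}$ with $\nabla\cdot\bm w_h=-p_h$ and $\|\nabla\bm w_h\|_{\mathcal T_h}+\|\eta^{1/2}\bm\Pi^\partial_{k-1}\bm w_h\|_{\partial\mathcal T_h}\lesssim\|p_h\|$.
\begin{itemize}
\item Take the continuous Stokes inf-sup: $\bm w\in[H^1_0]^d$ with $\nabla\cdot\bm w=-p_h$ and $\|\bm w\|_1\lesssim\|p_h\|$.
\item Set $\bm w_h=\bm\Pi^{\rm BDM}_k\bm w$. Then $\bm w_h\cdot\bm n=0$ on $\Gamma$, and by \eqref{commutativity}, $\nabla\cdot\bm w_h=-p_h$, since $\nabla\cdot\bm w_h\in\mathcal P_{k-1}$.
\item Use $\widehat{\bm w}_h=\bm\Pi^\partial_{k-1}\bm w$. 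This is single-valued and vanishes on $\Gamma$.
\item The hard part is the $H^1$-stability bounds $\|\nabla\bm w_h\|_{\mathcal T_h}\lesssim|\bm w|_1$ and $\|\eta^{1/2}\bm\Pi^\partial_{k-1}(\bm w_h-\bm w)\|_{\partial\mathcal T_h}\lesssim|\bm w|_1$. I would obtain them from \eqref{BDM error} with $m=1$, the inverse inequality \eqref{trace_1} applied to $\bm w_h-\bm\Pi^o_k\bm w$, \eqref{2.4'}, and the local trace inequality.
\end{itemize}
Testing with $(0,\bm w_h,\widehat{\bm w}_h,0)$ gives $\|p_h\|^2$ plus the terms $-(\nabla\cdot\mathbb L_h,\bm w_h)+\langle\mathbb L_h\bm n,\widehat{\bm w}_h\rangle$ and the stabilization term. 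After integration by parts the first of these equals $(\mathbb L_h,\nabla\bm w_h)-\langle\mathbb L_h\bm n,\bm\Pi^\partial_{k-1}\bm w_h-\widehat{\bm w}_h\rangle$. Both pieces, as well as the stabilization term, are bounded by $C\big(\nu^{-1/2}\|\mathbb L_h\|+\nu^{1/2}\|\eta^{1/2}(\bm\Pi^\partial_{k-1}\bm u_h-\widehat{\bm u}_h)\|\big)\nu^{1/2}\|p_h\|$. The factor $\nu^{1/2}$ requires scaling the test function as $(0,\nu^{-1}\bm w_h,\nu^{-1}\widehat{\bm w}_h,0)$ when $\nu$ is small; equivalently, measure in the given norm and absorb with Young. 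I will track the $\nu$-weights carefully here.

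\textbf{Step 4: combination.} Take the test function $(\mathbb L_h-\delta_1\nu\nabla\bm u_h,\ \bm u_h+\delta_2\bm w_h,\ \widehat{\bm u}_h+\delta_2\widehat{\bm w}_h,\ p_h)$ with small $\delta_1,\delta_2$. Adding Steps 1–3 then bounds $\mathscr B$ from below by $c\big(\interleave(\mathbb L_h,\bm u_h,\widehat{\bm u}_h)\interleave^2+\|p_h\|^2\big)$, while the norm of the test function is bounded by a constant times $\interleave(\mathbb L_h,\bm u_h,\widehat{\bm u}_h)\interleave+\|p_h\|$. This yields \eqref{BDMinfsup1}, and \eqref{BDMinfsup2} follows as noted above.
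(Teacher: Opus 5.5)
Your proposal is correct and is essentially the paper's proof. It uses the same identity test, the same test $(-\nu\nabla\bm u_h,\bm 0,\bm 0,0)$, the same pressure test built from $\bm\Pi_k^{\rm BDM}$ of the continuous inf-sup velocity with trace $\bm\Pi_{k-1}^{\partial}\bm v$, and a weighted sum. The paper puts the large weight $1+C_1^2+\nu C_4^2+\nu C_5^2$ on the identity test instead of your small $\delta_1,\delta_2$, and it only says the second estimate ``follows similarly''; your finite-dimensional duality remark is a cleaner justification of that. On the $\nu$-weights you left open: the pressure test has $\interleave\cdot\interleave$-norm $\lesssim\nu^{1/2}\|p_h\|_{\mathcal T_h}$, so your unscaled argument and the paper's both give a $\nu$-independent $\beta$ for $\nu\lesssim 1$, and no $\nu^{-1}$ rescaling is needed there. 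For large $\nu$ no choice of test function helps: taking $(\mathbb L_h,\bm u_h,\widehat{\bm u}_h)=\bm 0$ shows the supremum is at most $\sqrt d\,\nu^{-1/2}\|p_h\|_{\mathcal T_h}$.
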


\begin{proof}
	We will provide a proof for \eqref{BDMinfsup1} since the proof of \eqref{BDMinfsup2} follows a similar approach. To establish \eqref{BDMinfsup1}, we will select appropriate test functions and proceed through the following four steps.
	
	\textbf{Step 1.} Considering the tuple $\left( \mathbb G_1, \boldsymbol{v}_1, \widehat{\boldsymbol{v}}_{1}, q_h \right) = \left( \mathbb{L}_{h}, \boldsymbol{u}_{h}, \widehat{\boldsymbol{u}}_{h}, p_{h} \right)$ in the space $\bm{\Sigma}_h \times \bm V_h^{\bm 0} \times \widehat{\bm V}_h^{\bm 0} \times Q_h$, and with reference to the definition of $\mathscr B$, it becomes evident that
	\begin{align}	
		&\mathscr{B}\left(\mathbb{L}_{h}, \boldsymbol{u}_{h}, \widehat{\boldsymbol{u}}_{h}, p_{h} ;\mathbb{L}_{h}, \boldsymbol{u}_{h}, \widehat{\boldsymbol{u}}_{h}, p_{h}\right)=\nu^{-1}\left\|\mathbb L_h\right\|_{\mathcal T_h}^2+\nu\|\eta^{1/2}(\bm\Pi_{k-1}^{\partial}\bm u_h-\widehat{\bm u}_h)\|_{\partial\mathcal T_h}^2.\label{step1}
	\end{align} 
	
	\textbf{Step 2.} Considering the tuple  $\left( \mathbb G_2,\boldsymbol{v}_2,\widehat{\boldsymbol{v}}_{2},q_h\right)=\left(-\nu\nabla\bm u_h , \bm 0, \bm 0,0 \right)\in  \bm{\Sigma}_h\times\bm V_h^{\bm 0}\times \widehat{\bm V}_h^{\bm 0}\times Q_h $. By applying Green's formula, we can derive the following:
	\begin{align}
		\mathscr{B}&\left(\mathbb{L}_{h}, \boldsymbol{u}_{h}, \widehat{\boldsymbol{u}}_{h}, p_{h} ;-\nu\nabla\bm u_h , \bm 0, \bm 0,0 \right)\nonumber\\
		&=	(\mathbb L_h,-\nabla\bm u_h)_{\mathcal T_h}-\nu(\bm u_h,\Delta\bm u_h)_{\mathcal T_h}
		+\nu\langle \widehat{\bm u}_h,\nabla\bm u_h\bm n \rangle_{\partial\mathcal T_h}\nonumber\\
		&=	(\mathbb L_h,-\nabla\bm u_h)_{\mathcal T_h}+\nu(\nabla\bm u_h,\nabla\bm u_h)_{\mathcal T_h}-\nu\langle {\bm u}_h- \widehat{\bm u}_h,\nabla\bm u_h \bm n \rangle_{\partial\mathcal T_h}.\label{step2_1}
	\end{align}
	Regarding the initial two terms in \eqref{step2_1}, we can establish their relationship using the Cauchy-Schwarz inequality, leading to the following result:
	\begin{align}\label{2.20}
		(\mathbb L_h,-\nabla\bm u_h)_{\mathcal T_h}+\nu(\nabla\bm u_h,\nabla\bm u_h)_{\mathcal T_h}\geq \nu\left\| \nabla\bm u_h\right\|^2_{\mathcal T_h}-\left\| \mathbb L_h\right\|_{\mathcal T_h} \left\| \nabla\bm u_h\right\|_{\mathcal T_h}.
	\end{align}
	Subsequently, utilizing the orthogonality property of $\bm\Pi^{\partial}_{k-1}$ in \eqref{L2}, we obtain:
	\begin{align}\label{2.21}
		-\nu\langle {\bm u}_h- \widehat{\bm u}_h,\nabla\bm u_h \bm n \rangle_{\partial\mathcal T_h}&=-\nu\langle \bm\Pi^{\partial}_{k-1}{\bm u}_h- \widehat{\bm u}_h,\nabla\bm u_h \bm n \rangle_{\partial\mathcal T_h}\nonumber\\
		&\geq-C_1\nu\|\eta^{1/2}( \bm\Pi^{\partial}_{k-1}{\bm u}_h- \widehat{\bm u}_h)\| _{\partial\mathcal T_h}\left\| \nabla\bm u_h\right\|_{\mathcal T_h},
	\end{align}
	where we used the trace inequality \eqref{trace_2} in \eqref{2.21}. In conjunction with \eqref{2.20} and \eqref{2.21}, and through the application of Young's inequality, we obtain:
	\begin{align}
		\mathscr{B}&\left(\mathbb{L}_{h},\boldsymbol{u}_{h}, \widehat{\boldsymbol{u}}_{h}, p_{h} ;-\nu\nabla\bm u_h , \bm 0, \bm 0,0 \right)\nonumber\\
		& 	\geq\nu\left\| \nabla\bm u_h\right\|^2_{\mathcal T_h}-\left\| \mathbb L_h\right\|_{\mathcal T_h} \left\| \nabla\bm u_h\right\|_{\mathcal T_h}-\nu C_1\|\eta^{1/2}( \bm\Pi^{\partial}_{k-1}{\bm u}_h- \widehat{\bm u}_h)\| _{\partial\mathcal T_h}\left\| \nabla\bm u_h\right\|_{\mathcal T_h}\nonumber\\
		&\geq\nu\left\| \nabla\bm u_h\right\|^2_{\mathcal T_h}-(\dfrac{1}{4}\nu\left\| \nabla\bm u_h\right\|^2_{\mathcal T_h}+\nu^{-1}\left\| \mathbb L_h\right\|^2_{\mathcal T_h})\nonumber\\
		&\quad-(\dfrac{1}{4}\nu\left\| \nabla\bm u_h\right\|^2_{\mathcal T_h}+(C_1)^2\nu\|\eta^{1/2}( \bm\Pi^{\partial}_{k-1}{\bm u}_h- \widehat{\bm u}_h)\|^2 _{\partial\mathcal T_h})\nonumber\\
		&\geq\dfrac{1}{2}\nu\left\| \nabla\bm u_h\right\|^2_{\mathcal T_h}-(\nu^{-1}\left\| \mathbb L_h\right\|^2_{\mathcal T_h}+({C_1})^2\nu\|\eta^{1/2}( \bm\Pi^{\partial}_{k-1}{\bm u}_h- \widehat{\bm u}_h)\|^2 _{\partial\mathcal T_h}).\label{step2}
	\end{align}
	
	\textbf{Step 3.} Given that $p_h\in L_{0}^{2}(\Omega)$ and invoking the continuous inf-sup condition (see \cite[Page 6, Lemma 2.2]{brezzi2008mixed}), it follows that there exists a $\boldsymbol{v} \in [H_{0}^{1}(\Omega)]^d$ such that:
	\begin{align}\label{continuous inf-sup}
		\nabla \cdot \boldsymbol{v}=p_h,\quad|\boldsymbol{v}|_{H^1(\Omega)} \lesssim\left\|p_h\right\|_{L^2(\Omega)}.
	\end{align}
	Subsequently, when we consider the tuple  
	$$\left( \mathbb G_3, {\boldsymbol{v}_3},\widehat{\boldsymbol{v}}_{3},q_h\right)=\left(0 , \boldsymbol{\Pi}_k^{\rm BDM}\bm v, \bm{\Pi}_{k-1}^{\partial}\bm v,0 \right)\in  \bm{\Sigma}_h\times\bm V_h^{\bm 0}\times \widehat{\bm V}_h^{\bm 0}\times Q_h,  $$ we get:
	\begin{align}
		&\mathscr{B}(\mathbb{L}_{h}, \boldsymbol{u}_{h}, \widehat{\boldsymbol{u}}_{h}, p_{h} ;0,\boldsymbol{\Pi}_k^{\rm BDM}\bm v ,\bm{\Pi}_{k-1}^{\partial}\bm v, 0)\nonumber\\
		&\qquad= -(\nabla\cdot\mathbb L_h,\boldsymbol{\Pi}_k^{\rm BDM}\bm v)_{\mathcal T_h}	+\langle\mathbb L_h\bm n,\bm{\Pi}_{k-1}^{\partial}\bm v \rangle_{\partial\mathcal T_h} -(p_h,\nabla \cdot\boldsymbol{\Pi}_k^{\rm BDM}\bm v)_{\mathcal T_h}\nonumber\\
		&\qquad\quad+\nu\langle \eta(\bm{\Pi}_{k-1}^{\partial}\bm u_h-\widehat{\bm u}_h ),
		\bm{\Pi}_{k-1}^{\partial}\boldsymbol{\Pi}_k^{\rm BDM} \boldsymbol{v} -\bm{\Pi}_{k-1}^{\partial}\bm v\rangle_{\partial\mathcal T_h}\nonumber\\
		&\qquad= (\mathbb L_h,\nabla\boldsymbol{\Pi}_k^{\rm BDM}\bm v)_{\mathcal T_h}+\langle\mathbb L_h\bm n,\bm{\Pi}_{k-1}^{\partial}\bm v-\boldsymbol{\Pi}_k^{\rm BDM}\bm v \rangle_{\partial\mathcal T_h}-(p_h,\nabla \cdot\boldsymbol{\Pi}_k^{\rm BDM}\bm v)_{\mathcal T_h}\label{step3_1}\\
		&\qquad\quad+\nu\langle \eta(\bm{\Pi}_{k-1}^{\partial}\bm u_h-\widehat{\bm u}_h ),
		\bm{\Pi}_{k-1}^{\partial}\boldsymbol{\Pi}_k^{\rm BDM} \boldsymbol{v} -\bm{\Pi}_{k-1}^{\partial}\bm v\rangle_{\partial\mathcal T_h}.\nonumber
	\end{align}
	Let us analyze the first term in \eqref{step3_1}, it shows 
	\begin{align}
		(\mathbb L_h,&\nabla\boldsymbol{\Pi}_k^{\rm BDM}\bm v)_{\mathcal T_h}\nonumber\\&\leq\left\|\mathbb L_h\right\|_{\mathcal T_h}\|\nabla\boldsymbol{\Pi}_k^{\rm BDM}\bm v\|_{\mathcal T_h}&&\text{(Cauchy-Schwarz inequality)}\nonumber\\
		&\leq \left\|\mathbb L_h\right\|_{\mathcal T_h}( \| \nabla \bm v \|_{\mathcal T_h}+\|\nabla\boldsymbol{\Pi}_k^{\rm BDM}\bm v-\nabla \bm v\|_{\mathcal T_h}) &&\text{(triangle inequality)} \nonumber \\
		&=\left\|\mathbb L_h\right\|_{\mathcal T_h}( |\boldsymbol{v}|_{ 1}+\|\nabla(\boldsymbol{\Pi}_k^{\rm BDM}\bm v-\bm v)\|_{\mathcal T_h}). &&\text{(definition of semi-norm)} \label{first term}
	\end{align}
	Now we proceed to the second term enclosed in parentheses in \eqref{first term}. By applying the triangle inequality, we derive:
	\begin{align*}
		&\|\nabla(\boldsymbol{\Pi}_k^{\rm BDM}\bm v-\bm v)\|_{\mathcal T_h}\nonumber\\
		&\quad \leq\left\| \nabla(\bm v-\bm{\Pi}_{k} \bm v)\right\| _{\mathcal T_h}+\| \nabla(\boldsymbol{\Pi}_k^{\rm BDM}\bm v-\bm{\Pi}_{k}\bm v)\| _{\mathcal T_h}\nonumber \\
		&\quad\leq c_1\left|\bm v\right|_{ 1}+c_2\|\mathfrak{h}^{-1} \bm{\Pi}_{k}(\boldsymbol{\Pi}_k^{\rm BDM}\bm v-\bm v)\| _{\mathcal T_h}\nonumber &&\text{(by \eqref{2.4'} and \eqref{trace_1})}\\
		&\quad\leq c_1|\bm v|_{ 1}+ {c_2}\|\mathfrak{h}^{-1}(\boldsymbol{\Pi}_k^{\rm BDM}\bm v-\bm v)\|_{\mathcal T_h}\nonumber &&\text{(by \eqref{2.7})}\\
		&\quad\leq c_1|\bm v|_{ 1}+ {c_3}|\bm v|_{ 1}\nonumber &&\text{(by \eqref{BDM error})}\\
		&\quad=C|\bm v|_{ 1},
	\end{align*}
	where $\mathfrak{h}|_T=h_T$, and $ {c_1,c_2,c_3}$ are positive constant independent of $h$, $h_T$ and $\nu$.  Combined with \eqref{first term}, this yields:
	\begin{align}
		(\mathbb L_h,\nabla\boldsymbol{\Pi}_k^{\rm BDM}\bm v)_{\mathcal T_h}
		\leq(1+C)\left\|\mathbb L_h\right\|_{\mathcal T_h}|\bm v|_{ 1}
		\leq C_2\left\|\mathbb L_h\right\|_{\mathcal T_h}\left\|p_h\right\|_{\mathcal T_h}.\label{2.26'}
	\end{align}
	Furthermore, we utilize the commutativity property of $\boldsymbol{\Pi}_k^{\rm BDM}$ as described in \eqref{commutativity}, in conjunction with \eqref{continuous inf-sup}, resulting in:
	\begin{align}
		(p_h,\nabla \cdot\boldsymbol{\Pi}_k^{\rm BDM}\bm v)_{\mathcal T_h}=(p_h,\nabla \cdot\bm v)_{\mathcal T_h}=\left\|p_h\right\|^2_{\mathcal T_h}.\label{2.27}
	\end{align}
	Utilizing the trace inequality and the approximations presented in \eqref{2.4'} and \eqref{BDM error}, we obtain:
	\begin{align}
		\langle\mathbb L_h\bm n, &\bm{\Pi}_{k-1}^{\partial}\bm v-\bm{\Pi}_k^{\rm BDM}\bm v \rangle_{\partial\mathcal T_h}\\
		&=\langle\mathbb L_h\bm n,\bm{\Pi}_{k-1}^{\partial}\bm v-\bm v \rangle_{\partial\mathcal T_h}+	\langle\mathbb L_h\bm n,\bm v-\bm{\Pi}_k^{\rm BDM}\bm v \rangle_{\partial\mathcal T_h}\nonumber\\
		&\leq
		\|\mathfrak{h}^{1/2}\mathbb L_h\|_{\partial\mathcal T_h}( \|\mathfrak{h}^{-1/2}(\bm{\Pi}_{k-1}^{\partial}\bm v-\bm v )\|_{\partial\mathcal T_h}+\| \mathfrak{h}^{-1/2}(\bm v-\boldsymbol{\Pi}_k^{\rm BDM}\bm v)\|_{\partial\mathcal T_h}) \nonumber\\
		&\leq  c_1\|\mathbb L_h\|_{\mathcal T_h}(                                                                                                                                                                                                                       c_2|\bm v |_{ 1}+c_3|\bm v |_{ 1})\nonumber\\
		&\leq C_3\left\|\mathbb L_h\right\|_{\mathcal T_h}\left\|p_h\right\|_{\mathcal T_h}.\label{2.28}
	\end{align}
	Likewise, when considering the final term in \eqref{step3_1}, we also arrive at:
	\begin{align}
		\nu&\langle \eta(\bm{\Pi}_{k-1}^{\partial}\bm u_h-\widehat{\bm u}_h ),
		\bm{\Pi}_{k-1}^{\partial}\boldsymbol{\Pi}_k^{\rm BDM} \boldsymbol{v} -\bm{\Pi}_{k-1}^{\partial}\bm v\rangle_{\partial\mathcal T_h}\nonumber\\
		&\quad\leq \nu\|\eta^{1/2}(\bm{\Pi}_{k-1}^{\partial}\bm u_h-\widehat{\bm u}_h )\|_{\mathcal T_h}\|	\eta^{1/2}\bm{\Pi}_{k-1}^{\partial}(\boldsymbol{\Pi}_k^{\rm BDM} \boldsymbol{v} -\bm v)\|_{\partial\mathcal T_h}\nonumber\\
		&\quad\leq C\nu\|\eta^{1/2}(\bm{\Pi}_{k-1}^{\partial}\bm u_h-\widehat{\bm u}_h )\|_{\partial\mathcal T_h}|\bm v|_{ 1}\nonumber\\
		&\quad\leq C_4\nu\|\eta^{1/2}(\bm{\Pi}_{k-1}^{\partial}\bm u_h-\widehat{\bm u}_h )\|_{\partial\mathcal T_h}\|p_h\|_{\mathcal T_h}.\label{2.29}
	\end{align}
	In combination with \eqref{2.26'}, \eqref{2.27}, \eqref{2.28}, and \eqref{2.29}, we reach the following conclusion:
	\begin{align}
		\mathscr{B}&\left(\mathbb{L}_{h}, \boldsymbol{u}_{h}, \widehat{\boldsymbol{u}}_{h}, p_{h} ;0,-\boldsymbol{\Pi}_k^{\rm BDM}\bm v ,-\bm{\Pi}_{k-1}^{\partial}\bm v, 0\right)\nonumber\\
		&\qquad\geq\left\|p_h\right\|^2_{\mathcal T_h}-(C_2+C_3)\left\|\mathbb L_h\right\|_{\mathcal T_h}\left\|p_h\right\|_{\mathcal T_h}\nonumber\\
		&\quad \qquad -C_4\nu\|\eta^{1/2}(\bm{\Pi}_{k-1}^{\partial}\bm u_h-\widehat{\bm u}_h )\|_{\partial\mathcal T_h}\|p_h\|_{\mathcal T_h}\nonumber\\
		&\qquad\geq\dfrac{1}{2}\left\|p_h\right\|^2_{\mathcal T_h}-(C_2+C_3)^2\left\|\mathbb L_h\right\|_{\mathcal T_h}^{2}-(C_4)^2\nu\|\eta^{1/2}(\bm{\Pi}_{k-1}^{\partial}\bm u_h-\widehat{\bm u}_h )\|^2_{\partial\mathcal T_h}.\nonumber\\
		&\qquad=\dfrac{1}{2}\left\|p_h\right\|^2_{\mathcal T_h}-(C_5)^2\left\|\mathbb L_h\right\|_{\mathcal T_h}^{2}-(C_4)^2\nu^2\|\eta^{1/2}(\bm{\Pi}_{k-1}^{\partial}\bm u_h-\widehat{\bm u}_h )\|^2_{\partial\mathcal T_h}. \label{step3}
	\end{align}
	Here, $C_1, C_2, C_3, C_4$, and $C_5$ (where $C_5$ is defined as $C_2 + C_3$) are positive constants that do not depend on $h$, $h_T$, or the fluid viscosity coefficient $\nu$. 
	
	\textbf{Step 4.} Based on the implications of \eqref{step1}, \eqref{step2}, and \eqref{step3}, when we test the tuple $(\mathbb G_h, \bm v_h, \widehat{\bm v}_h, q_h)$ with the following components:
	\begin{align*}
		\mathbb G_h&=( 1+(C_1)^2+\nu (C_4)^2+\nu (C_5)^2) \mathbb G_1+\mathbb G_2+\mathbb G_3\in  \bm{\Sigma}_h,\\
		\boldsymbol{v}_h&=\left( 1+(C_1)^2+\nu (C_4)^2+\nu (C_5)^2\right)\boldsymbol{v}_1+\boldsymbol{v}_2+\boldsymbol{v}_3\in\bm {V^0}_{ h},\\
		\widehat{\boldsymbol{v}}_{h}&=\left( 1+(C_1)^2+\nu (C_4)^2+\nu (C_5)^2\right)\widehat{\boldsymbol{v}}_{1}+\widehat{\boldsymbol{v}}_{2}+\widehat{\boldsymbol{v}}_{3}\in\widehat{\bm V}_h^{\bm 0},\\
		q_h&=p_h\in Q_h,			
	\end{align*}
	and then sum up the results, we obtain:
	\begin{align*}
		\mathscr B(\mathbb L_h,\bm u_h,\widehat{\bm u}_h,p_h;
		\mathbb G_h,\bm v_h,\widehat{\bm v}_h,q_h)\geq\dfrac{1}{2}( \interleave(\mathbb L_h,\bm u_h,\widehat{\bm u}_h)\interleave^2+\|p_h\|_{\mathcal T_h}^2  ).
	\end{align*}
	To conclude the proof, we also need to control the norm of the constructed test tuple. By definition,
	\[
	\interleave(\mathbb G_2,\bm v_2,\widehat{\bm v}_2)\interleave
	=\nu^{1/2}\|\nabla\bm u_h\|_{\mathcal T_h}
	\le \interleave(\mathbb L_h,\bm u_h,\widehat{\bm u}_h)\interleave.
	\]
	Moreover, the stability of the BDM projection, the trace inequality, and the continuous inf-sup estimate \eqref{continuous inf-sup} imply
	\[
	\nu^{1/2}\|\nabla\bm v_3\|_{\mathcal T_h}
	+\nu^{1/2}\|\eta^{1/2}(\bm\Pi_{k-1}^{\partial}\bm v_3-\widehat{\bm v}_3)\|_{\partial\mathcal T_h}
	\lesssim \|p_h\|_{\mathcal T_h},
	\]
	and hence $\interleave(\mathbb G_3,\bm v_3,\widehat{\bm v}_3)\interleave\lesssim \|p_h\|_{\mathcal T_h}$. Therefore
	\[
	\interleave(\mathbb G_h,\bm v_h,\widehat{\bm v}_h)\interleave+\|q_h\|_{\mathcal T_h}
	\lesssim \interleave(\mathbb L_h,\bm u_h,\widehat{\bm u}_h)\interleave+\|p_h\|_{\mathcal T_h}.
	\]
	Dividing the lower bound above by this estimate yields \eqref{BDMinfsup1}.
\end{proof}
\endgroup

\subsection{Error Estimations}\label{section3.3}

{
	We refer back to the analogous integration by parts formulation as presented in \cite[Page 1766, (2.7)]{MR2837483} in the following manner.
	
	\begin{lemma}\label{itegration}
		For any $0<\epsilon<1/2$,   $\nabla\cdot(\mathbb L -p \bm I)\in L^{2}(\Omega)$,  $\bm I\in \mathbb R^{d\times d}$ is the identity matrix,  $\mathbb L\in [H^{\epsilon}(\Omega)]^{d\times d}$, and $p \in H^{\epsilon}(\Omega)$, with $\bm v_h\in \bm V_h^{\bm 0}$, the following relations hold:
		\begin{align}\label{I1}
			(\nabla\cdot(\mathbb L - p\bm I),\bm v_h)_{\mathcal T_h}
			+
			(\mathbb L - p\bm I,\nabla\bm v_h )_{\mathcal T_h}
			=
			\langle
			(\mathbb L - p\bm I)\bm n,\bm v_h
			\rangle_{\partial\mathcal T_h},
		\end{align}
		and consequently,
		\begin{align}\label{I2}
			(\nabla\cdot(\mathbb L - p\bm I),\bm v_h)_{\mathcal T_h}
			+
			(\mathbb L - p\bm I,\nabla\bm v_h )_{\mathcal T_h}
			=
			\langle
			\mathbb L \bm n,\bm v_h
			\rangle_{\partial\mathcal T_h}.
		\end{align}
		\begin{proof}
			To establish \eqref{I1}, we aim to demonstrate the validity of the following equation:
			\begin{align}\label{I3}
				(\nabla\cdot(\mathbb L - p\bm I),\bm v_h)_{T}
				+
				(\mathbb L - p\bm I,\nabla\bm v_h )_{T}
				=
				\langle
				(\mathbb L - p\bm I)\bm n,\bm v_h
				\rangle_{\partial T}.
			\end{align}
			In this context, the inner product can be regarded as the dual pair of $H^{\epsilon-1/2}(E)$ and $H^{1/2-\epsilon}(E)=H_0^{1/2-\epsilon}(E)$   on the right side. The validity of \eqref{I3} ensues from the standard density argument and the fact that \eqref{I3} holds for $C^{\infty}(\bar{T})$ functions.
			
			Given $p|_{\partial T}\in H^{{\epsilon-1/2}}(\partial T)$ and $\bm v_h\cdot\bm n\in H^{1/2 -\epsilon}(\partial T)$, along with $\bm v_h\in \bm V_h^{\bm 0}\subset \bm H({\rm div};\Omega)$ with $\bm v_h\cdot\bm n|{\partial\Omega}=0$, it follows that:
			\begin{align*}
				\langle p\bm I\bm n,\bm v_h \rangle_{\partial\mathcal T_h}=	\langle p,\bm v_h\cdot\bm n \rangle_{\partial\mathcal T_h}=0.
			\end{align*}
			Consequently, \eqref{I2} can be derived from \eqref{I1}.
		\end{proof}
	\end{lemma}

}

In this section, we will begin by deriving the error equation as given in \eqref{BDM error equation}. Subsequently, we will conduct an analysis of the $L^2$ error estimation for velocity.

Our approach commences with the establishment of discrete errors, defined in relation to the following quantities:
\begin{align*}
	\bm\xi_h^{\mathbb L}&=\bm{\Pi}_{k-1}^o\mathbb L-\mathbb L_h,\quad
	\bm\xi_h^{\bm u}=\bm{\Pi}_{k}^{\rm BDM}\bm u-\bm u_h,\\
	\bm\xi_h^{\widehat{\bm u}} &=\bm{\Pi}_{k-1}^{\partial}\bm u-\widehat{\bm u}_h,\quad
	\xi_h^{p}={\Pi}_{k-1}^{o}p-p_h,
\end{align*}
where $\bm{\Pi}_{k-1}^o$, $\bm{\Pi}_{k-1}^{\partial}$, ${\Pi}_{k-1}^o$ are $L^2$ projections  {defined in \eqref{L2}.}

\begingroup
\begin{lemma}\label{BDM error equation lemma}
	Assuming that the solution $(\mathbb{L}, \bm{u}, p) \in [ {H^{\epsilon}}(\Omega)]^{d\times d} \times [H^1(\Omega)]^d \times( {H^{\epsilon}}(\Omega)\cap L_0^2(\Omega))$  {with $0<\epsilon<1/2$} satisfies \Cref{2.2}, we can establish the following equation for all $(\mathbb{G}_h, \bm{v}_h, \widehat{\bm{v}}_h, q_h) \in \bm{\Sigma}_h \times \bm{V}^0_h \times \widehat{\bm{V}}_h^0 \times Q_h$:
	\begin{align}\label{BDM error equation}
		\mathscr B(\bm{\Pi}_{k-1}^o\mathbb L,\bm{\Pi}_k^{\rm BDM}\bm u,\bm\Pi_{k-1}^{\partial}{\bm u},\Pi_{k-1}^op;
		\mathbb G_h,\bm v_h,\widehat{\bm v}_h,q_h)=	(\bm f,\bm v_h)_{ \mathcal T_h}	+E(\mathbb L,\bm u;\bm v_h,\widehat{\bm v}_h), 
	\end{align}
	where the error term is defined as:
	\begin{align*}
		&E(\mathbb L,\bm u;\mathbb G_h,\bm v_h,\widehat{\bm v}_h)=\langle(\bm{\Pi}_{k-1}^o\mathbb L-\mathbb L)\bm n,\widehat{\bm v}_h -\bm v_h\rangle_{\partial\mathcal T_h} 
		\\	&\qquad
		+\nu\langle \eta(\bm{\Pi}_k^{\rm BDM}\bm u-\bm u),
		\bm{\Pi}_{k-1}^{\partial}\bm v_h-\widehat{\bm v}_h
		\rangle_{\partial\mathcal T_h}
		+ (\bm{\Pi}_k^{\rm BDM}\bm u - \bm u,\nabla\cdot\mathbb G_h)_{\mathcal T_h}.
	\end{align*}
\end{lemma}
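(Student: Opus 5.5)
Since $\mathscr B$ is a sum of four equations, I will verify the identity one test function at a time: first $\mathbb G_h$ alone, then $(\bm v_h,\widehat{\bm v}_h)$ together, then $q_h$. In each case I replace the projections by the exact fields and track what is left over. Throughout I use that $\mathbb L=\nu\nabla\bm u$, that $\widehat{\bm v}_h$ vanishes on $\Gamma$, and that $\bm v_h\in\bm V_h^{\bm 0}$ is $H(\mathrm{div})$-conforming.

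\textbf{First equation.} Consider $\nu^{-1}(\bm\Pi_{k-1}^o\mathbb L,\mathbb G_h)_{\mathcal T_h}+(\bm\Pi_k^{\rm BDM}\bm u,\nabla\cdot\mathbb G_h)_{\mathcal T_h}-\langle\bm\Pi_{k-1}^\partial\bm u,\mathbb G_h\bm n\rangle_{\partial\mathcal T_h}$.
\begin{itemize}
\item Since $\mathbb G_h\in[\mathcal P_{k-1}]^{d\times d}$ and $\mathbb G_h\bm n|_E\in[\mathcal P_{k-1}(E)]^d$, I can drop both $L^2$ projections.
\item I write $\bm\Pi_k^{\rm BDM}\bm u=\bm u+(\bm\Pi_k^{\rm BDM}\bm u-\bm u)$.
\item For $\bm u\in H^1$, elementwise integration by parts gives $(\bm u,\nabla\cdot\mathbb G_h)_{\mathcal T_h}-\langle\bm u,\mathbb G_h\bm n\rangle_{\partial\mathcal T_h}=-(\nabla\bm u,\mathbb G_h)_{\mathcal T_h}$.
\item This cancels $\nu^{-1}(\mathbb L,\mathbb G_h)_{\mathcal T_h}$ by \eqref{2.2a}.
\end{itemize}
The only remainder is $(\bm\Pi_k^{\rm BDM}\bm u-\bm u,\nabla\cdot\mathbb G_h)_{\mathcal T_h}$, which is the third term of $E$.

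\textbf{Third equation.} The commutativity \eqref{commutativity} gives $(\nabla\cdot\bm\Pi_k^{\rm BDM}\bm u,q_h)_{\mathcal T_h}=(\nabla\cdot\bm u,q_h)_{\mathcal T_h}=0$. This holds because $q_h|_T\in\mathcal P_{k-1}(T)$.

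\textbf{Second equation.} This is the main step. The terms are:
\begin{itemize}
\item $-(\nabla\cdot\bm\Pi_{k-1}^o\mathbb L,\bm v_h)_{\mathcal T_h}+\langle\bm\Pi_{k-1}^o\mathbb L\,\bm n,\widehat{\bm v}_h\rangle_{\partial\mathcal T_h}$. I integrate the first of these by parts elementwise to get $(\bm\Pi_{k-1}^o\mathbb L,\nabla\bm v_h)_{\mathcal T_h}-\langle\bm\Pi_{k-1}^o\mathbb L\,\bm n,\bm v_h\rangle_{\partial\mathcal T_h}$. Since $\nabla\bm v_h\in[\mathcal P_{k-1}]^{d\times d}$, the projection drops in the volume term, giving $(\mathbb L,\nabla\bm v_h)_{\mathcal T_h}$.
\item The pressure term. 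Since $\nabla\cdot\bm v_h\in\mathcal P_{k-1}$, it becomes $-(p,\nabla\cdot\bm v_h)_{\mathcal T_h}$.
\item The stabilization term. Because $\widehat{\bm v}_h\in\mathcal P_{k-1}(E)$, $\bm\Pi_{k-1}^\partial\bm\Pi_{k-1}^\partial=\bm\Pi_{k-1}^\partial$ and the operator is self-adjoint, I rewrite $\bm\Pi_{k-1}^\partial\bm\Pi_k^{\rm BDM}\bm u-\bm\Pi_{k-1}^\partial\bm u$ paired with $\bm\Pi_{k-1}^\partial\bm v_h-\widehat{\bm v}_h$ as $\langle\eta(\bm\Pi_k^{\rm BDM}\bm u-\bm u),\bm\Pi_{k-1}^\partial\bm v_h-\widehat{\bm v}_h\rangle_{\partial\mathcal T_h}$. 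This is exactly the second term of $E$.
\end{itemize}

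It remains to identify $(\mathbb L-p\bm I,\nabla\bm v_h)_{\mathcal T_h}$. Here I invoke \Cref{itegration} in the form \eqref{I2}, which needs only $\mathbb L,p\in H^\epsilon$. Together with $-\nabla\cdot(\mathbb L-p\bm I)=\bm f$ it gives
\[
(\mathbb L-p\bm I,\nabla\bm v_h)_{\mathcal T_h}=(\bm f,\bm v_h)_{\mathcal T_h}+\langle\mathbb L\bm n,\bm v_h\rangle_{\partial\mathcal T_h}.
\]
Two further facts close the computation:
\begin{itemize}
\item $\langle\mathbb L\bm n,\widehat{\bm v}_h\rangle_{\partial\mathcal T_h}=0$, since $\widehat{\bm v}_h$ is single valued on interior faces, vanishes on $\Gamma$, and $\mathbb L\bm n$ is continuous across faces because $\nabla\cdot\mathbb L=\nabla p-\bm f$ makes sense.
\item Hence the boundary terms combine to $\langle(\bm\Pi_{k-1}^o\mathbb L-\mathbb L)\bm n,\widehat{\bm v}_h-\bm v_h\rangle_{\partial\mathcal T_h}$, which is the first term of $E$.
\end{itemize}

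The main obstacle is justifying these face terms for merely $H^\epsilon$ data. The pairings must be read as $H^{\epsilon-1/2}$--$H^{1/2-\epsilon}$ dualities, as in \Cref{itegration}. The delicate point is the vanishing of $\langle\mathbb L\bm n,\widehat{\bm v}_h\rangle$: I would justify it either by applying the \Cref{itegration} duality argument to $\mathbb L$ alone, using $\nabla\cdot\mathbb L\in L^2$ for smooth $p$ and then a density step, or by simply assuming that $\mathbb L\bm n$ has a single-valued normal trace. The pressure face term is absorbed by the normal continuity of $\bm v_h$ via \eqref{I2}, and does not appear in $E$.
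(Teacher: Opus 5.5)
Your proposal is correct and follows the paper's proof essentially line by line. Your three computations are the paper's $R_1$, $R_2$, $R_3$; you only group the stabilization term with the second equation rather than with the $q_h$-term. You also rightly isolate the identity $\langle\mathbb L\bm n,\widehat{\bm v}_h\rangle_{\partial\mathcal T_h}=0$, which the paper uses without comment in the last step of its $R_2$ computation.

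Your stated reason for that identity should be replaced. For $p\in H^{\epsilon}$ the distribution $\nabla\cdot\mathbb L=\nabla p-\bm f$ is not in $L^2$, so $\mathbb L$ need not lie in $H(\mathrm{div})$. Argue instead exactly as the paper does for $p$ in \Cref{itegration}: the global field $\mathbb L\in[H^{\epsilon}(\Omega)]^{d\times d}$ has single-valued face traces in the face-wise $H^{\epsilon-1/2}$--$H_0^{1/2-\epsilon}$ duality. Hence the two contributions on each interior face cancel ($\bm n$ flips while $\widehat{\bm v}_h$ is single valued), and those on $\Gamma$ vanish because $\widehat{\bm v}_h=0$ there.
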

\begin{proof}
	In accordance with the definition of $\mathscr{B}$, we observe that:
	\begin{align*}
		&\mathscr B(\bm{\Pi}_{k-1}^o\mathbb L,\bm{\Pi}_k^{\rm BDM}\bm u,\bm\Pi_{k-1}^{\partial}{\bm u},\Pi_{k-1}^op;
		\mathbb G_h,\bm v_h,\widehat{\bm v}_h,q_h)\\
		&\qquad= 	\nu^{-1}( \bm{\Pi}_{k-1}^o\mathbb L,\mathbb G_h)_{\mathcal T_h}+(\bm{\Pi}_k^{\rm BDM}\bm u,\nabla\cdot\mathbb G_h)_{\mathcal T_h}
		-\langle \bm\Pi_{k-1}^{\partial}{\bm u},\mathbb G_h\bm n \rangle_{\partial\mathcal T_h}\\
		&\qquad\quad	-(\nabla\cdot\bm{\Pi}_{k-1}^o\mathbb L,\bm v_h)_{\mathcal T_h}	+\langle\bm{\Pi}_{k-1}^o\mathbb L\bm n,\widehat{\bm v}_h \rangle_{\partial\mathcal T_h} -(\Pi_{k-1}^op,\nabla \cdot\bm v_h)_{\mathcal T_h}\\
		&\qquad\quad	+\nu\langle\eta\bm{\Pi}_{k-1}^{\partial}(\bm{\Pi}_k^{\rm BDM}\bm u-{\bm u}),
		\bm{\Pi}_{k-1}^{\partial}\bm v_h-\widehat{\bm v}_h
		\rangle_{\partial\mathcal T_h}
		+ (\nabla\cdot\bm{\Pi}_k^{\rm BDM}\bm u,q_h)_{\mathcal T_h}.
	\end{align*}
	For convenience, we define
	\begin{align*}
		&R_1:=\nu^{-1}( \bm{\Pi}_{k-1}^o\mathbb L,\mathbb G_h)_{\mathcal T_h}+(\bm{\Pi}_k^{\rm BDM}\bm u,\nabla\cdot\mathbb G_h)_{\mathcal T_h}-\langle \bm\Pi_{k-1}^{\partial}{\bm u},\mathbb G_h\bm n \rangle_{\partial\mathcal T_h},\\
		&R_2:=-(\nabla\cdot\bm{\Pi}_{k-1}^o\mathbb L,\bm v_h)_{\mathcal T_h}	+\langle\bm{\Pi}_{k-1}^o\mathbb L\bm n,\widehat{\bm v}_h \rangle_{\partial\mathcal T_h} -(\Pi_{k-1}^op,\nabla \cdot\bm v_h)_{\mathcal T_h},\\
		&R_3:=\nu\langle\eta\bm{\Pi}_{k-1}^{\partial}(\bm{\Pi}_k^{\rm BDM}\bm u-{\bm u}),
		\bm{\Pi}_{k-1}^{\partial}\bm v_h-\widehat{\bm v}_h
		\rangle_{\partial\mathcal T_h}
		+ (\nabla\cdot\bm{\Pi}_k^{\rm BDM}\bm u,q_h)_{\mathcal T_h}.
	\end{align*}
	Utilizing the properties of $ \bm{\Pi}_{k-1}^{\partial}$ in \eqref{L2} and the properties of $ \bm{\Pi}_{k}^{\rm BDM}$ in \eqref{2.6}, we can derive the following results:
	\begin{align}\label{BDM R1}
		R_1&=\nu^{-1}( \bm{\Pi}_{k-1}^o\mathbb L,\mathbb G_h)_{\mathcal T_h}+(\bm{\Pi}_k^{\rm BDM}\bm u,\nabla\cdot\mathbb G_h)_{\mathcal T_h}-\langle \bm\Pi_{k-1}^{\partial}{\bm u},\mathbb G_h\bm n \rangle_{\partial\mathcal T_h}\nonumber\\
		&=\nu^{-1}( \mathbb L,\mathbb G_h)_{\mathcal T_h}+(\bm u,\nabla\cdot\mathbb G_h)_{\mathcal T_h}-\langle {\bm u},\mathbb G_h\bm n \rangle_{\partial\mathcal T_h}
		+(\bm{\Pi}_k^{\rm BDM}\bm u - \bm u,\nabla\cdot\mathbb G_h)_{\mathcal T_h}
		\nonumber\\
		&=\nu^{-1}( \mathbb L,\mathbb G_h)_{\mathcal T_h}-(\nabla\bm u,\mathbb{G}_h)_{\mathcal T_h}	+(\bm{\Pi}_k^{\rm BDM}\bm u - \bm u,\nabla\cdot\mathbb G_h)_{\mathcal T_h}.
	\end{align}
	
	Using \Cref{itegration} and the orthogonality of the $L^2$ projection, we arrive at the following expression:
	\begin{align}\label{BDM R2}
		R_2&=(\bm{\Pi}_{k-1}^o\mathbb L,\nabla\bm v_h)_{\mathcal T_h}-\left\langle \bm{\Pi}_{k-1}^o\mathbb L\bm n,\bm v_h\right\rangle_{\partial\mathcal T_h} +\langle\bm{\Pi}_{k-1}^o\mathbb L\bm n,\widehat{\bm v}_h \rangle_{\partial\mathcal T_h}\nonumber\\
		&\quad -(\Pi_{k-1}^op-p,\nabla \cdot\bm v_h)_{\mathcal T_h}-(p,\nabla \cdot\bm v_h)_{\mathcal T_h}\nonumber\\
		&=(\mathbb L,\nabla\bm v_h)_{\mathcal T_h}+\left\langle \bm{\Pi}_{k-1}^o\mathbb L\bm n,\widehat{\bm v}_h-\bm v_h\right\rangle_{\partial\mathcal T_h}-(p,\nabla \cdot\bm v_h)_{\mathcal T_h}\nonumber\\
		& 
		=(\mathbb L-p\bm I,\nabla\bm v_h)_{\mathcal T_h}+\left\langle \bm{\Pi}_{k-1}^o\mathbb L\bm n,\widehat{\bm v}_h-\bm v_h\right\rangle_{\partial\mathcal T_h}\nonumber\\
		& =-(\nabla\cdot(\mathbb{L}-p\bm I),\bm v_h)_{\mathcal T_h}
		+\langle(\mathbb{L}-p\bm I)\bm n,\bm v_h \rangle_{\partial\mathcal T_h}
		+\left\langle \bm{\Pi}_{k-1}^o\mathbb L\bm n,\widehat{\bm v}_h-\bm v_h\right\rangle_{\partial\mathcal T_h}\nonumber\\
		& =-(\nabla\cdot(\mathbb{L}-p\bm I),\bm v_h)_{\mathcal T_h}
		+\langle(\bm{\Pi}_{k-1}^o\mathbb L-\mathbb L)\bm n,\widehat{\bm v}_h-\bm v_h\rangle_{\partial\mathcal T_h}.
	\end{align}
	By the orthogonality of $\bm{\Pi}_{k-1}^o$ and the commutativity property of $\boldsymbol{\Pi}_k^{\rm BDM}$ as given in \eqref{commutativity}, we can state:
	\begin{align}\label{BDM R3}
		R_3=\nu\langle\eta(\bm{\Pi}_k^{\rm BDM}\bm u-{\bm u}),
		\bm{\Pi}_{k-1}^{\partial}\bm v_h-\widehat{\bm v}_h
		\rangle_{\partial\mathcal T_h}.
	\end{align}
	
	Therefore, by consolidating equations \eqref{BDM R1}, \eqref{BDM R2}, and \eqref{BDM R3}, along with the conditions $\nu^{-1}\mathbb L - \nabla \bm u = 0$ and $ -\nabla\cdot(\mathbb{L}-p\bm I) = \bm f$  { in $L^2(\Omega)$ product}, we have completed the proof.
\end{proof}

\begin{lemma}\label{BDM_E}
	For $(\mathbb L,\bm u)\in[{H}^{k}(\Omega)]^{d\times d}\times[ H^{k+1}(\Omega)]^{d}$, it holds the following estimate
	\begin{align}
		|E(\mathbb L,\bm u;\mathbb G_h,\bm v_h,\widehat{\bm v}_h)|\lesssim \nu^{1/2} h^{k}|\bm u|_{k+1}\interleave(\mathbb G_h,\bm v_h,\widehat{\bm v}_h)\interleave,\quad \forall (\bm v_h,\widehat{\bm v}_h)\in \bm {V^0}_{ h}\times \widehat{\bm V}_h^{\bm 0}.
	\end{align}
\end{lemma}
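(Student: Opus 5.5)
We bound the three terms of $E$ separately, each by a constant times $\nu^{1/2}h^k|\bm u|_{k+1}$ times one of the three pieces of $\interleave(\mathbb G_h,\bm v_h,\widehat{\bm v}_h)\interleave$. We write $E=E_1+E_2+E_3$ in the order they appear in \Cref{BDM error equation lemma}, and use $\mathbb L=\nu\nabla\bm u$ throughout, so that $|\mathbb L|_{k}\le \nu|\bm u|_{k+1}$.

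\textbf{The term $E_1$.} Since $\mathbb L_h$-type test traces are of degree $k-1$ and $\bm\Pi_{k-1}^o\mathbb L-\mathbb L$ is orthogonal to polynomials, the first step is to replace $\widehat{\bm v}_h-\bm v_h$ by $\widehat{\bm v}_h-\bm\Pi_{k-1}^{\partial}\bm v_h$ plus $\bm\Pi_{k-1}^{\partial}\bm v_h-\bm v_h$. The first piece is handled by Cauchy--Schwarz with weights $\mathfrak h^{1/2}$ and $\eta^{1/2}$. The trace inequality for $H^1$ functions and \eqref{2.4'} give $\|\mathfrak h^{1/2}(\bm\Pi_{k-1}^o\mathbb L-\mathbb L)\|_{\partial\mathcal T_h}\lesssim h^k|\mathbb L|_k\le \nu h^k|\bm u|_{k+1}$. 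Pairing this with $\|\eta^{1/2}(\bm\Pi_{k-1}^\partial\bm v_h-\widehat{\bm v}_h)\|_{\partial\mathcal T_h}$ yields $\nu^{1/2}h^k|\bm u|_{k+1}$ times the third component of the norm. For the second piece, $\|\bm v_h-\bm\Pi^\partial_{k-1}\bm v_h\|_{0,E}=\|(\bm v_h-\bm\Pi^o_0\bm v_h)-\bm\Pi^\partial_{k-1}(\bm v_h-\bm\Pi^o_0\bm v_h)\|_{0,E}$. The inverse/trace inequality \eqref{trace_2} together with \eqref{2.4'} then gives $\|\mathfrak h^{-1/2}(\bm v_h-\bm\Pi^\partial_{k-1}\bm v_h)\|_{\partial\mathcal T_h}\lesssim\|\nabla\bm v_h\|_{\mathcal T_h}$. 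This produces $\nu h^k|\bm u|_{k+1}\|\nabla \bm v_h\|_{\mathcal T_h}=\nu^{1/2}h^k|\bm u|_{k+1}\cdot\nu^{1/2}\|\nabla\bm v_h\|_{\mathcal T_h}$.

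\textbf{The term $E_2$.} Here we first note that the test function $\bm\Pi_{k-1}^\partial\bm v_h-\widehat{\bm v}_h$ lies in $\mathcal P_{k-1}(E)$, so $\bm\Pi_k^{\rm BDM}\bm u-\bm u$ can be replaced by its $\bm\Pi_{k-1}^\partial$-projection if desired; it is not needed, however. By Cauchy--Schwarz the term is bounded by $\nu\|\eta^{1/2}(\bm\Pi_k^{\rm BDM}\bm u-\bm u)\|_{\partial\mathcal T_h}\|\eta^{1/2}(\bm\Pi_{k-1}^\partial\bm v_h-\widehat{\bm v}_h)\|_{\partial\mathcal T_h}$. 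The factor $\|\mathfrak h^{-1/2}(\bm\Pi_k^{\rm BDM}\bm u-\bm u)\|_{\partial T}$ is controlled by the local trace inequality, giving $h_T^{-1}\|\bm\Pi_k^{\rm BDM}\bm u-\bm u\|_{0,T}+|\bm\Pi_k^{\rm BDM}\bm u-\bm u|_{1,T}$. The first part is bounded by \eqref{BDM error}. The $H^1$-seminorm needs the $H^1$ approximation of the BDM projection, which we obtain exactly as in Step 3 of \Cref{Theorem 2.3}: insert $\bm\Pi_k^o\bm u$, use \eqref{2.4'} for $\bm u-\bm\Pi_k^o\bm u$, and use the inverse inequality \eqref{trace_1} plus \eqref{2.7} and \eqref{BDM error} (with $m=k+1$) for the polynomial difference. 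Altogether this gives $\lesssim h^k|\bm u|_{k+1}$, and hence the term is bounded by $\nu^{1/2}h^k|\bm u|_{k+1}$ times the norm.

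\textbf{The term $E_3$.} This term, $(\bm\Pi_k^{\rm BDM}\bm u-\bm u,\nabla\cdot\mathbb G_h)_{\mathcal T_h}$, is the main obstacle, since it carries no factor $\nu$ while the norm only controls $\nu^{-1/2}\|\mathbb G_h\|$. Two routes are available. The first is to use \eqref{2.6}-type orthogonality: $\nabla\cdot\mathbb G_h$ is a vector of degree $k-2$. In 2D, and via $\bm P_k^*$ in 3D, the BDM interior moments are orthogonal against $[\mathcal P_{k-2}]^d$, which is spanned by gradients of $\mathcal P_{k-1}$ and the curl-bubble / $\bm P_k^*$ parts. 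If that holds, the term vanishes identically. Failing that, the second route is to integrate by parts and use a Cauchy--Schwarz bound with $\nu^{-1/2}\|\mathbb G_h\|$. This requires $\nu^{1/2}$ to appear, which is only consistent for $\nu\lesssim1$ or after rescaling. We would therefore first try to prove the orthogonality $(\bm\Pi_k^{\rm BDM}\bm u-\bm u,\bm w)_T=0$ for all $\bm w\in[\mathcal P_{k-2}(T)]^d$ from the BDM degrees of freedom, which kills $E_3$. Summing the three bounds then gives the claim.
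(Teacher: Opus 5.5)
Your bounds for $E_1$ and $E_2$ are correct, and you track the $\mathfrak h^{\pm1/2}$ weights and the $H^1$-seminorm of the BDM error more carefully than the paper does. The gap is $E_3$: you never establish a bound for it, and both routes you describe go wrong.

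\textbf{The orthogonality route fails for $k\ge3$.} With the projection fixed in \Cref{BDM}, the interior moments are taken against $\nabla\mathcal P_{k-1}(T)$ and $\mathbf{curl}(b_T\mathcal P_{k-2}(T))$ (resp.\ $\bm P_k^*(T)$ in 3D). This space does not contain $[\mathcal P_{k-2}(T)]^d$, because the curl-bubbles have degree $k$, not $k-2$.

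Concretely, take $d=2$, $k=3$ and $\bm x^\perp=(-x_2,x_1)$. Suppose $\bm x^\perp=\nabla a+\mathbf{curl}(b_Tq)$ with $a\in\mathcal P_2$, $q\in\mathcal P_1$. Then $\mathbf{curl}(b_Tq)$ would have degree $\le1$. This forces $q=0$, since for $q\neq0$ the polynomial $b_Tq$ has exact degree at least $3$ and so its curl has degree at least $2$. But then $\bm x^\perp$ would be a gradient, which it is not.

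Testing with $\bm u$ compactly supported in $T$ therefore shows that $(\bm u-\bm\Pi_3^{\rm BDM}\bm u,\bm x^\perp)_T$ does not vanish for general $\bm u$. Moreover $\bm x^\perp=\nabla\cdot\mathbb G_h$ for $\mathbb G_h=\mathrm{diag}(-x_1x_2,x_1x_2)$ on $T$ (and zero elsewhere), so $E_3\neq0$ in general.

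The same obstruction appears in 3D. For $k=3$, the field $(-x_2,x_1,0)$ is not in $\nabla\mathcal P_2+\bm P_3^*(T)$, since the remainder would lie in $\bm P_1^*(T)=\{\bm 0\}$.

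Your argument is valid only for $k\le2$. For $k=1$ one has $\nabla\cdot\mathbb G_h=\bm 0$, and for $k=2$ one has $[\mathcal P_0]^d=\nabla\mathcal P_1$, so \eqref{2.6} applies. Orthogonality to $[\mathcal P_{k-2}]^d$ does hold for the BDM interpolant with N\'ed\'elec interior moments, but that is a different projection from the one appearing in $E$.

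\textbf{The route you discarded is the one that works.} Your reason for discarding it is a miscount of $\nu$. Since $\nu^{-1}\|\mathbb G_h\|_{\mathcal T_h}^2\le\interleave(\mathbb G_h,\bm v_h,\widehat{\bm v}_h)\interleave^2$, you have
\[
\|\mathbb G_h\|_{\mathcal T_h}\le\nu^{1/2}\interleave(\mathbb G_h,\bm v_h,\widehat{\bm v}_h)\interleave .
\]
The norm itself supplies the factor $\nu^{1/2}$, for every $\nu>0$, so no restriction $\nu\lesssim1$ or rescaling is needed, and no integration by parts either. Elementwise Cauchy--Schwarz, the inverse inequality \eqref{trace_1}, and \eqref{BDM error} with $m=k+1$ give
\begin{align*}
|E_3|&\le\sum_{T\in\mathcal T_h}\|\bm\Pi_k^{\rm BDM}\bm u-\bm u\|_T\|\nabla\cdot\mathbb G_h\|_T
\lesssim\sum_{T\in\mathcal T_h}h_T^{-1}\|\bm\Pi_k^{\rm BDM}\bm u-\bm u\|_T\|\mathbb G_h\|_T\\
&\lesssim h^k|\bm u|_{k+1}\|\mathbb G_h\|_{\mathcal T_h}
\le\nu^{1/2}h^k|\bm u|_{k+1}\interleave(\mathbb G_h,\bm v_h,\widehat{\bm v}_h)\interleave .
\end{align*}
This is exactly the paper's argument, and it closes the proof.
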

\begin{proof}
	We introduce the following notations:
	\begin{align*}
		E_1&:=\langle(\bm{\Pi}_{k-1}^o\mathbb L-\mathbb L)\bm n,\widehat{\bm v}_h -\bm v_h\rangle_{\partial\mathcal T_h},\\
		 E_2&:=\nu\langle \eta(\bm{\Pi}_k^{\rm BDM}\bm u-\bm u),
		\bm{\Pi}_{k-1}^{\partial}\bm v_h-\widehat{\bm v}_h
		\rangle_{\partial\mathcal T_h},\\
	E_3&:=(\bm{\Pi}_k^{\rm BDM}\bm u - \bm u,\nabla\cdot\mathbb G_h)_{\mathcal T_h}.
	\end{align*}
	By employing the triangle inequality and the properties of \Cref{projection properties}, we can establish the following inequalities:
	\begin{align}
		|E_1|&\leq\|\bm{\Pi}_{k-1}^o\mathbb L-\mathbb L\|_{\partial\mathcal T_h}(\|\bm{\Pi}_{k-1}^{\partial}\bm v_h-\bm v_h\|_{\partial\mathcal T_h} +\|\bm{\Pi}_{k-1}^{\partial}\bm v_h-\widehat{\bm v}_h\|_{\partial\mathcal T_h})\nonumber\\
		&\lesssim\nu h^{k}|\bm u|_{k+1}(\|\nabla\bm v_h\|_{\mathcal T_h}+\|\eta^{1/2}(\bm{\Pi}_{k-1}^{\partial}\bm v_h-\widehat{\bm v}_h)\|_{\partial\mathcal T_h})\nonumber\\
		&\leq\nu^{1/2}h^{k}|\bm u|_{k+1}\interleave(\mathbb G_h,\bm v_h,\widehat{\bm v}_h)\interleave.\label{BDM_E1}
	\end{align}
	By applying the trace inequality and \eqref{BDM error}, we obtain:

	\begin{align}
		|E_2|&\leq\nu\|\eta^{1/2}(\bm{\Pi}_{k}^{\rm BDM}\bm u-\bm u)\|_{\partial\mathcal T_h}\|\eta^{1/2}(\bm{\Pi}_{k-1}^{\partial}\bm v_h-\widehat{\bm v}_h)\|_{\partial\mathcal T_h}\nonumber\\
		&\lesssim \nu^{1/2} h^{k}|\bm u|_{k+1} \interleave(\mathbb G_h,\bm v_h,\widehat{\bm v}_h)\interleave,\label{BDM_E2}
	\end{align}
		{
			and
				\begin{align}
				|E_3|&\leq\sum_{T\in\mathcal T_h}\|\bm{\Pi}_{k}^{\rm BDM}\bm u-\bm u\|_{T}\|\nabla\cdot\mathbb G_h\|_T
				\leq\sum_{T\in\mathcal T_h}h_T^{-1}\|\bm{\Pi}_{k}^{\rm BDM}\bm u-\bm u\|_{T}\|\mathbb G_h\|_T
				\nonumber\\
				&\lesssim \nu^{1/2} h^{k}|\bm u|_{k+1} \interleave(\mathbb G_h,\bm v_h,\widehat{\bm v}_h)\interleave.\label{BDM_E3}
			\end{align}
			
}
	Hence, we have established the conclusion by combining \eqref{BDM_E1} and \eqref{BDM_E3}.
	
\end{proof}

\begin{theorem}\label{th-35} 
	Let $ (\mathbb L, \bm u,p) \in [H^k(\Omega)]^{d\times d} \times [H^{k+1}(\Omega)]^d\times (H^{\epsilon}(\Omega)\cap L_0^2(\Omega))$  {with $0<\epsilon<1/2$}, and $(\mathbb G_h, \bm v_h, \widehat{\bm v}_h, q_h) \in \bm{\Sigma}_h \times \bm V_h^{\bm 0} \times \widehat{\bm V}_h^{\bm 0} \times Q_h$ be the solutions to \Cref{2.2,2.4}, respectively. Then, we can establish the following error estimate:
	\begin{align}\label{BDM Priori Error}
		\begin{split}
		&\interleave(\bm{\Pi}_{k-1}^o\mathbb L-\mathbb L_h,\bm{\Pi}_{k}^{\rm BDM}\bm u-\bm u_h,\bm{\Pi}_{k-1}^{\partial}\bm u-\widehat{\bm u}_h)\interleave \\
		&\qquad+\|\Pi_{k}^op-p_h\|_{\mathcal{T}_h}
		\lesssim \nu^{1/2} h^{k}|\bm u|_{k+1}. 
		\end{split}
	\end{align}
	
\end{theorem}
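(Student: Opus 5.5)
The estimate follows from the standard Strang-type argument: subtract the discrete problem from the consistency identity of \Cref{BDM error equation lemma}, then apply the inf-sup condition of \Cref{Theorem 2.3}.

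\textbf{Step 1: admissibility of the discrete errors.} Since $\bm u\cdot\bm n=\bm g\cdot\bm n$ on $\Gamma$, the boundary moment property of $\bm\Pi_k^{\rm BDM}$ gives $\bm\Pi_k^{\rm BDM}\bm u\cdot\bm n|_{\Gamma}=\Pi_k^{\partial}(\bm g\cdot\bm n)$. Similarly, $\bm\Pi_{k-1}^{\partial}\bm u|_\Gamma=\bm\Pi_{k-1}^\partial\bm g$. Hence $\bm\xi_h^{\bm u}\in\bm V_h^{\bm 0}$ and $\bm\xi_h^{\widehat{\bm u}}\in\widehat{\bm V}_h^{\bm 0}$. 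Also $\xi_h^p\in Q_h$, because $\Pi_{k-1}^o p$ has zero mean when $p\in L^2_0$. The pressure in the statement should be read as $\Pi_{k-1}^o p$; I would note this explicitly.

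\textbf{Step 2: error equation.} Subtracting \eqref{2.4} from \eqref{BDM error equation} and using bilinearity of $\mathscr B$ gives
\begin{align*}
\mathscr B(\bm\xi_h^{\mathbb L},\bm\xi_h^{\bm u},\bm\xi_h^{\widehat{\bm u}},\xi_h^p;\mathbb G_h,\bm v_h,\widehat{\bm v}_h,q_h)=E(\mathbb L,\bm u;\mathbb G_h,\bm v_h,\widehat{\bm v}_h)
\end{align*}
for all test tuples. The key point for pressure robustness and low pressure regularity is that the pressure contributes nothing to $E$. In $R_2$ the pressure terms cancel exactly through \Cref{itegration}, which uses $\bm v_h\cdot\bm n$ continuous and vanishing on $\partial\Omega$ and needs only $p\in H^\epsilon$. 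In $R_3$, $(\nabla\cdot\bm\Pi_k^{\rm BDM}\bm u,q_h)=(\nabla\cdot\bm u,q_h)=0$ by \eqref{commutativity}.

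\textbf{Step 3: inf-sup and consistency bound.} Apply \eqref{BDMinfsup1} to the error tuple and bound the numerator by \Cref{BDM_E}. Using $\interleave(\mathbb G_h,\bm v_h,\widehat{\bm v}_h)\interleave\le\interleave(\mathbb G_h,\bm v_h,\widehat{\bm v}_h)\interleave+\|q_h\|_{\mathcal T_h}$, this yields
\begin{align*}
\beta\big(\interleave(\bm\xi_h^{\mathbb L},\bm\xi_h^{\bm u},\bm\xi_h^{\widehat{\bm u}})\interleave+\|\xi_h^p\|_{\mathcal T_h}\big)\le\sup\frac{|E|}{\interleave\cdot\interleave+\|q_h\|_{\mathcal T_h}}\lesssim\nu^{1/2}h^k|\bm u|_{k+1}.
\end{align*}
The bound in \Cref{BDM_E} relies on $\mathbb L=\nu\nabla\bm u$, so $|\mathbb L|_k=\nu|\bm u|_{k+1}$. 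This accounts for $E_1$ being of size $\nu h^k|\bm u|_{k+1}\cdot\nu^{-1/2}\interleave\cdot\interleave$.

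\textbf{Main obstacle: $\nu$-independence of $\beta$.} As written, Step 4 of the proof of \Cref{Theorem 2.3} uses $\nu$-dependent weights such as $\nu C_4^2$, so $\beta$ may depend on $\nu$ as stated. The pressure estimate is not $\nu$-uniform in any case: the bound $\|\xi_h^p\|\lesssim\nu^{1/2}h^k|\bm u|_{k+1}$ has the right scaling only for moderate $\nu$. For the velocity part I would avoid the pressure entirely. Testing with divergence-free $\bm v_h$ and $q_h=0$ removes $\xi_h^p$, because $(\xi_h^p,\nabla\cdot\bm v_h)=0$. Steps 1--2 of the inf-sup proof, which are $\nu$-robust, then give coercivity on the discretely divergence-free subspace, and $\bm\xi_h^{\bm u}$ lies in that subspace. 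This yields the energy-norm part with constants independent of $\nu$. The pressure part then follows from Step 3 of the inf-sup argument, applied to the error equation with $\bm v_h=\bm\Pi_k^{\rm BDM}\bm v$ and $\nabla\cdot\bm v=\xi_h^p$, absorbing the already-bounded energy terms.
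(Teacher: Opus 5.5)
Your Steps 1--3 are exactly the paper's proof: subtract \eqref{2.4} from \Cref{BDM error equation lemma}, apply \eqref{BDMinfsup1} to the error tuple, and bound the numerator by \Cref{BDM_E}. You are also right that the pressure projection must be read as $\Pi_{k-1}^o p$. Your $\nu$-robustness refinement goes beyond the paper, and it is a genuinely different and correct route.

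The paper never separates velocity and pressure. A single inf-sup estimate with the unweighted pressure norm handles everything, and the same template is reused for the RT scheme and for the duality argument of \Cref{theorem 2.8}. Its $\beta$ is in fact uniform for $\nu\le 1$, because the weight $1+C_1^2+\nu(C_4^2+C_5^2)$ and the bound $\interleave(\mathbb G_3,\bm v_3,\widehat{\bm v}_3)\interleave\lesssim\nu^{1/2}\|p_h\|_{\mathcal T_h}$ stay bounded there. So the obstacle you flag only matters for $\nu\gg 1$, which lies outside the regime $\nu=\mathrm{Re}^{-1}$ the paper intends.

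Your route uses Steps 1--2 as coercivity on the exactly divergence-free subspace. This is legitimate: testing the error equation with $(0,\bm 0,\bm 0,q_h)$ gives $\nabla\cdot\bm\xi_h^{\bm u}\perp Q_h$, while $\nabla\cdot\bm\xi_h^{\bm u}\in Q_h$. You then recover the pressure with the Fortin test function. This yields the energy estimate with $\nu$-independent constants for every $\nu>0$, together with the sharper bound
\[
\|\xi_h^p\|_{\mathcal T_h}\lesssim\nu^{1/2}\interleave(\bm\xi_h^{\mathbb L},\bm\xi_h^{\bm u},\bm\xi_h^{\widehat{\bm u}})\interleave+\nu h^k|\bm u|_{k+1}\lesssim\nu h^k|\bm u|_{k+1}.
\]
It also makes the pressure-robust structure of the argument explicit.

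When you write it up, state explicitly that this recovers the pressure part of \eqref{BDM Priori Error} only for $\nu\lesssim 1$. That restriction cannot be removed. Since $E$ does not involve $p$ and $\mathbb L=\nu\nabla\bm u$, dividing $\bm\xi_h^{\mathbb L}$, $\xi_h^p$ and the momentum row by $\nu$ turns the error equation into its $\nu=1$ version. Hence $\xi_h^p$ is exactly $\nu$ times a $\nu$-independent function, and no $\nu$-independent constant can give the $\nu^{1/2}$ pressure bound as $\nu\to\infty$, whichever route is used.
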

\begin{proof}
	Firstly, by applying the triangle inequality, we obtain:
	\begin{equation}\label{2.24}
		\begin{aligned}
			\interleave(\mathbb L_h-\mathbb L,\bm u_h-\bm u,\widehat{\bm u}_h-\bm u)\interleave\leq&\interleave(\bm{\Pi}_{k-1}^o\mathbb L-\mathbb L_h, \bm{\Pi}_{k}^{\rm BDM}\bm u-\bm u_h, \bm{\Pi}_{k-1}^{\partial}\bm u-\widehat{\bm u}_h)\interleave\\
			&+\interleave(\bm{\Pi}_{k-1}^o\mathbb L-\mathbb L, \bm{\Pi}_{k}^{\rm BDM}\bm u-\bm u, \bm{\Pi}_{k-1}^{\partial}\bm u-{\bm u})\interleave.\\
		\end{aligned}
	\end{equation}
	Using \Cref{Theorem 2.3} and \Cref{BDM_E,BDM error equation lemma}, we can deduce:
	\begin{equation}\label{2.1.5}
		\begin{aligned}
			\interleave(\bm{\Pi}_{k-1}^o\mathbb L-\mathbb L_h,&\bm{\Pi}_{k}^{\rm BDM}\bm u-\bm u_h,\bm{\Pi}_{k-1}^{\partial}\bm u-\widehat{\bm u}_h)\interleave+\|\Pi_{k}^op-p_h\|_{\mathcal{T}_h}\\
			&\lesssim \sup_{\bm 0\neq (\mathbb G_h,\bm v_h,\widehat{\bm v}_h,q_h)\in \bm{\Sigma}_h\times\bm V_h^{\bm 0}\times \widehat{\bm V}_h^{\bm 0}\times Q_h}\frac{	\mathscr R(\bm\xi_h^{\mathbb L},\bm\xi_h^{\bm u},\bm\xi_h^{\widehat{\bm u}},\xi_h^{p};
				\mathbb G_h,\bm v_h,\widehat{\bm v}_h,q_h)}{	\interleave(\mathbb G_h,\bm v_h,\widehat{\bm v}_h)\interleave+\|q_h\|_{\mathcal{T}_h}}\\
			&=\sup_{\bm 0\neq (\mathbb G_h,\bm v_h,\widehat{\bm v}_h,q_h)\in \bm{\Sigma}_h\times\bm V_h^{\bm 0}\times \widehat{\bm V}_h^{\bm 0}\times Q_h}\frac{E(\mathbb L,\bm u;\mathbb G_h,\bm v_h,\widehat{\bm v}_h)}{	\interleave(\mathbb G_h,\bm v_h,\widehat{\bm v}_h)\interleave+\|q_h\|_{\mathcal{T}_h}}\\
			&\lesssim\nu^{1/2} h^{k}|\bm u|_{k+1},
		\end{aligned}
	\end{equation}
which finishes our proof.
\end{proof}
\endgroup

By the above results and the triangle inequality we can get the following estimations.
\begin{corollary}\label{estimete for p}
	Let $(\mathbb L, \bm u, p) \in [H^k(\Omega)]^{d\times d} \times [H^{k+1}(\Omega)]^d \times  (H^{k}(\Omega)\cap L_0^2(\Omega))$, and $(\mathbb G_h, \bm v_h, \widehat{\bm v}_h, q_h) \in \bm{\Sigma}_h \times \bm V_h^{\bm 0} \times \widehat{\bm V}_h^{\bm 0} \times Q_h$ be the solutions to \Cref{2.2,2.4}, respectively. Then the following error estimate holds:
	\begin{align*}
		\nu^{-1/2}\|\mathbb L-\mathbb L_h\|_{\mathcal T_h}
		+\nu^{1/2}\|\nabla(\bm u-\bm u_h)\|_{\mathcal T_h}
		&\leq Ch^k\nu^{1/2}|\bm u|_{k+1},\\
		\|p_h-p\|_{\mathcal T_h}&\leq Ch^k(\nu^{1/2}|\bm u|_{k+1}+ |p|_{k}).
	\end{align*}
\end{corollary}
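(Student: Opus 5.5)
The plan is to combine \Cref{th-35} with the triangle inequality, splitting each error into a discrete part controlled by \Cref{th-35} and a projection part controlled by \Cref{projection properties} and \eqref{BDM error}.

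\textbf{Flux and velocity gradient.} Write $\mathbb L-\mathbb L_h=(\mathbb L-\bm\Pi_{k-1}^o\mathbb L)+\bm\xi_h^{\mathbb L}$. \Cref{th-35} bounds $\nu^{-1/2}\|\bm\xi_h^{\mathbb L}\|_{\mathcal T_h}$ by $C\nu^{1/2}h^k|\bm u|_{k+1}$. For the projection part, \eqref{2.4'} with $m=k$ and $\mathbb L=\nu\nabla\bm u$ gives $\nu^{-1/2}\|\mathbb L-\bm\Pi_{k-1}^o\mathbb L\|_{\mathcal T_h}\lesssim\nu^{-1/2}h^k|\mathbb L|_k=\nu^{1/2}h^k|\bm u|_{k+1}$.

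Similarly, write $\nabla(\bm u-\bm u_h)=\nabla(\bm u-\bm\Pi_k^{\rm BDM}\bm u)+\nabla\bm\xi_h^{\bm u}$. \Cref{th-35} again controls $\nu^{1/2}\|\nabla\bm\xi_h^{\bm u}\|_{\mathcal T_h}$. For $\|\nabla(\bm u-\bm\Pi_k^{\rm BDM}\bm u)\|_{T}$, repeat the argument of Step 3 in the proof of \Cref{Theorem 2.3} with higher regularity:
\begin{align*}
\|\nabla(\bm u-\bm\Pi_k^{\rm BDM}\bm u)\|_T\le\|\nabla(\bm u-\bm\Pi_k^o\bm u)\|_T+Ch_T^{-1}\|\bm\Pi_k^o\bm u-\bm\Pi_k^{\rm BDM}\bm u\|_T .
\end{align*}
Here the inverse estimate \eqref{trace_1} is applied to the polynomial $\bm\Pi_k^o\bm u-\bm\Pi_k^{\rm BDM}\bm u$. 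Using \eqref{2.4'} and \eqref{BDM error} with $m=k+1$, the right-hand side is $\lesssim h_T^k|\bm u|_{k+1,T}$. Summing over elements and multiplying by $\nu^{1/2}$ gives the first estimate.

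\textbf{Pressure.} Write $p-p_h=(p-\Pi_{k-1}^op)+\xi_h^p$, with $\xi_h^p=\Pi_{k-1}^op-p_h$. The projection part satisfies $\|p-\Pi_{k-1}^op\|_{\mathcal T_h}\lesssim h^k|p|_k$ by \eqref{2.4'} with $m=k$. This is where $p\in H^k$ is needed, and it is why $|p|_k$ appears without a $\nu$ weight. The discrete part is bounded by $C\nu^{1/2}h^k|\bm u|_{k+1}$ through the inf-sup argument of \Cref{th-35}.

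\textbf{Main obstacle.} \Cref{th-35} is stated with $\Pi_k^op$, while the discrete pressure lives in $\mathcal P_{k-1}$ and the error equation in \Cref{BDM error equation lemma} uses $\Pi_{k-1}^op$. I would check that the proof of \Cref{th-35} really controls $\|\Pi_{k-1}^op-p_h\|_{\mathcal T_h}$; the inf-sup bound \eqref{BDMinfsup1} applied to $(\bm\xi_h^{\mathbb L},\bm\xi_h^{\bm u},\bm\xi_h^{\widehat{\bm u}},\xi_h^p)$ yields exactly this quantity. If one insists on the $\Pi_k^o$ version, note $\|\Pi_k^op-\Pi_{k-1}^op\|_{\mathcal T_h}\lesssim h^k|p|_k$, so the final bound is unchanged either way.

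The point to verify is that the consistency term $E$ in \Cref{BDM_E} carries no pressure contribution. This holds because $\bm v_h$ is exactly divergence-free and \Cref{itegration} removes the normal pressure traces. As a result, the discrete pressure error involves no pressure norm, and $|p|_k$ enters only through the projection part.
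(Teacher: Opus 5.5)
Your proposal is correct and follows the paper's own argument, which simply combines \Cref{th-35} with the triangle inequality and the projection estimates. Your handling of the $\Pi_k^o$ versus $\Pi_{k-1}^o$ mismatch is also right: the inf-sup bound is really applied to $\xi_h^p=\Pi_{k-1}^op-p_h\in Q_h$, and in any case $\|\Pi_k^op-\Pi_{k-1}^op\|_{\mathcal T_h}\lesssim h^k|p|_k$.

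One small correction to your last remark. The pressure drops out of $E$ not because the test functions are divergence-free; a general $\bm v_h\in\bm V_h^{\bm 0}$ is not. It drops out because $\nabla\cdot\bm v_h|_T\in\mathcal P_{k-1}(T)$ lets you replace $\Pi_{k-1}^op$ by $p$. After that, the normal continuity of $\bm v_h$ and $\bm v_h\cdot\bm n|_{\partial\Omega}=0$ make $\langle p,\bm v_h\cdot\bm n\rangle_{\partial\mathcal T_h}$ vanish in \Cref{itegration}.
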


The $L^2$ error of velocity can be analyzed using standard duality arguments. We initiate the process by considering the dual problem associated with \Cref{2.2}, where we seek a solution $(\bm\xi, \theta)$ that satisfies:
\begin{subequations}
	\begin{align}\label{3.24}
		\nu^{-1}\bm\phi -\nabla \bm \xi&=0&\text{ in}\quad\Omega,\\
		-\nabla\cdot\bm\phi+\nabla \theta&=\boldsymbol{\Pi}_k^{\rm BDM} \boldsymbol{u}-\boldsymbol{u}_{h}&\text{ in}\quad\Omega,\\
		\nabla \cdot \bm\xi&=0 &\text{ in}\quad\Omega,\\
		\bm\xi&=\mathbf{0}&\text{ on}\quad\partial\Omega.
	\end{align}
\end{subequations}

We introduce a regularity hypothesis, assuming the existence of a constant $C>0$ independent of $\nu$, such that:
\begin{align}\label{regularity}
	\nu|\boldsymbol{\xi}|_{2}+|\theta|_{1} \leq C\left\|\boldsymbol{\Pi}_k^{\rm BDM} \boldsymbol{u}-\boldsymbol{u}_{h}\right\|_{0}.
\end{align}

It's worth noting that \eqref{regularity} is satisfied when $\Omega$ is convex in $\mathbb{R}^2$ \cite{MR0404849} or when $\Omega$ is a convex polyhedron in $\mathbb{R}^3$ \cite{MR977489}.

\begin{theorem}\label{theorem 2.8}
	Let $(\mathbb L, \bm u, p) \in [H^{k}(\Omega)]^{d\times d} \times [H^{k+1}(\Omega)]^d \times  (H^{\epsilon}(\Omega)\cap L_0^2(\Omega))$  {with $0<\epsilon<1/2$} represent the solution to \Cref{2.2}, and let $(\mathbb G_h, \bm v_h, \widehat{\bm v}_h, q_h) \in \bm{\Sigma}_h \times \bm V_h^{\bm 0} \times \widehat{\bm V}_h^{\bm 0} \times Q_h$ be the solution to \Cref{2.4}. In this context, the error estimate is given by:
	\begin{align}
		\|\bm u-\bm u_h\|_0\le C h^{k+1}|\bm u|_{k+1}.
	\end{align}
\end{theorem}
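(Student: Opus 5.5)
We argue by an Aubin--Nitsche duality built on the dual problem \eqref{3.24}. By the triangle inequality and \eqref{BDM error}, $\|\bm u-\bm u_h\|_0\le \|\bm u-\bm\Pi_k^{\rm BDM}\bm u\|_0+\|\bm\xi_h^{\bm u}\|_0$, and the first term is $\lesssim h^{k+1}|\bm u|_{k+1}$. It therefore suffices to bound $\|\bm\xi_h^{\bm u}\|_0^2=(\bm\xi_h^{\bm u},-\nabla\cdot\bm\phi+\nabla\theta)$.

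\textbf{Step 1: a consistency identity for the dual solution.} Since $\bm\phi=\nu\nabla\bm\xi\in H^1$ and $\theta\in H^1$, the dual analogue of \Cref{BDM error equation lemma} applies to the adjoint form. Its proof is the same computation with $\mathscr B(\cdot;\cdot)$ replaced by the transposed form, which is the one appearing in \eqref{BDMinfsup2}. Testing with the error tuple $(\bm\xi_h^{\mathbb L},\bm\xi_h^{\bm u},\bm\xi_h^{\widehat{\bm u}},\xi_h^p)$ gives
\[
\|\bm\xi_h^{\bm u}\|_0^2=\mathscr B(\bm\xi_h^{\mathbb L},\bm\xi_h^{\bm u},\bm\xi_h^{\widehat{\bm u}},\xi_h^p;\bm\Pi_{k-1}^o\bm\phi,\bm\Pi_k^{\rm BDM}\bm\xi,\bm\Pi_{k-1}^\partial\bm\xi,\Pi_{k-1}^o\theta)-E^*,
\]
where $E^*$ collects the dual consistency terms $\langle(\bm\Pi_{k-1}^o\bm\phi-\bm\phi)\bm n,\bm\xi_h^{\widehat{\bm u}}-\bm\xi_h^{\bm u}\rangle_{\partial\mathcal T_h}$, $\nu\langle\eta(\bm\Pi_k^{\rm BDM}\bm\xi-\bm\xi),\cdot\rangle$ and $(\bm\Pi_k^{\rm BDM}\bm\xi-\bm\xi,\nabla\cdot\bm\xi_h^{\mathbb L})$. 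The pressure term drops out. Two facts are used here: $\nabla\cdot\bm\xi_h^{\bm u}=0$ exactly, because $\nabla\cdot\bm u_h=0$ and \eqref{commutativity} holds; and $\bm\xi_h^{\bm u}\cdot\bm n=0$ on $\partial\Omega$. Hence $(\nabla\theta,\bm\xi_h^{\bm u})=0$. This cancellation is what makes the final bound independent of $p$ and of $\nu$.

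\textbf{Step 2: inserting the primal error equation.} Subtracting \eqref{2.4} from \Cref{BDM error equation lemma} gives $\mathscr B(\bm\xi_h;\bm w_h)=E(\mathbb L,\bm u;\bm w_h)$ for every discrete $\bm w_h$. Taking $\bm w_h$ to be the projected dual tuple yields
\[
\|\bm\xi_h^{\bm u}\|_0^2=E(\mathbb L,\bm u;\bm\Pi_{k-1}^o\bm\phi,\bm\Pi_k^{\rm BDM}\bm\xi,\bm\Pi_{k-1}^\partial\bm\xi)-E^*.
\]
Two pieces remain to be estimated.
\begin{itemize}
\item For $E^*$, apply Cauchy--Schwarz, trace and approximation estimates with $m=1$ (i.e. $|\bm\xi|_2$). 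This gives $|E^*|\lesssim \nu^{1/2}h|\bm\xi|_2\,\interleave(\bm\xi_h^{\mathbb L},\bm\xi_h^{\bm u},\bm\xi_h^{\widehat{\bm u}})\interleave$. By \Cref{th-35} the energy factor is $\lesssim\nu^{1/2}h^k|\bm u|_{k+1}$, so $|E^*|\lesssim \nu h^{k+1}|\bm\xi|_2|\bm u|_{k+1}$.
\item For the primal consistency term $E(\mathbb L,\bm u;\cdot)$, the crude bound of \Cref{BDM_E} only gives $\nu^{1/2}h^k$ times the energy norm of the projected dual tuple. That energy norm is $\lesssim\nu^{1/2}|\bm\xi|_2$ and carries no power of $h$, so the crude bound loses one order. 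The extra power of $h$ has to come from the terms themselves:
\begin{itemize}
\item In $E_1$, rewrite $\widehat{\bm v}_h-\bm v_h$ as $(\bm\Pi_{k-1}^\partial\bm\xi-\bm\xi)+(\bm\xi-\bm\Pi_k^{\rm BDM}\bm\xi)$. Each piece is $O(h^{3/2}|\bm\xi|_2)$ on $\partial\mathcal T_h$, since $\bm\xi$ is single valued and the interior-face sums of $\mathbb L\bm n$ telescope. This gives $|E_1|\lesssim \nu h^{k+1}|\bm u|_{k+1}|\bm\xi|_2$.
\item $E_2$ is a product of two BDM/$L^2$ projection errors, which directly gives an extra $h$.
\item In $E_3$, write $\nabla\cdot\bm\Pi_{k-1}^o\bm\phi=\nabla\cdot\bm\phi+\nabla\cdot(\bm\Pi_{k-1}^o\bm\phi-\bm\phi)$. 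The first part is paired with the BDM error of $\bm u$, giving $h^{k+1}$. For the second part, integrate by parts elementwise.
\end{itemize}
\end{itemize}

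\textbf{Step 3: closing the argument.} Combining the bounds, $\|\bm\xi_h^{\bm u}\|_0^2\lesssim \nu h^{k+1}|\bm u|_{k+1}|\bm\xi|_2$. By \eqref{regularity}, $\nu|\bm\xi|_2\le C\|\bm\xi_h^{\bm u}\|_0$; dividing through finishes the proof. The main obstacle is Step 2: extracting the extra $h$ from $E_1$ and $E_3$ uniformly in $\nu$. This must use the $\nu$-weighting $\bm\phi=\nu\nabla\bm\xi$ and never the pressure $\theta$. If the direct integration by parts in $E_3$ fails, I would fall back on the $\bm L^2$-orthogonality properties \eqref{2.6}--\eqref{2.12} of $\bm\Pi_k^{\rm BDM}$ against the polynomial $\nabla\cdot\bm\Pi_{k-1}^o\bm\phi$ of degree $k-2$.
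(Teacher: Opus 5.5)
Your overall route is the paper's. Up to sign conventions, $\|\bm\xi_h^{\bm u}\|_0^2$ equals $E(\mathbb L,\bm u;\bm\Pi_{k-1}^o\bm\phi,\bm\Pi_k^{\rm BDM}\bm\xi,\bm\Pi_{k-1}^\partial\bm\xi)$ minus $E(\bm\phi,\bm\xi;\bm\xi_h^{\mathbb L},\bm\xi_h^{\bm u},\bm\xi_h^{\widehat{\bm u}})$. The paper obtains your adjoint consistency from the primal consistency of the dual problem plus the sign-flip symmetry of $\mathscr B$. The second term is then bounded via \Cref{th-35}, and \eqref{regularity} closes the argument.

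The gap is in $E_1$. You split $\bm\Pi_{k-1}^\partial\bm\xi-\bm\Pi_k^{\rm BDM}\bm\xi=(\bm\Pi_{k-1}^\partial\bm\xi-\bm\xi)+(\bm\xi-\bm\Pi_k^{\rm BDM}\bm\xi)$ and assert that both pieces are $O(h^{3/2}|\bm\xi|_2)$ on $\partial\mathcal T_h$. For the first piece this requires $\bm\Pi_{k-1}^\partial$ to reproduce linear functions on faces, i.e.\ $k\ge2$. For $k=1$, $\bm\Pi_0^\partial$ reproduces only constants, so $\|\bm\Pi_0^\partial\bm\xi-\bm\xi\|_{\partial T}$ is only $O(h_T^{1/2}|\bm\xi|_{1,T})$. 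Paired by Cauchy--Schwarz with $\|(\bm\Pi_0^o\mathbb L-\mathbb L)\bm n\|_{\partial T}\lesssim\nu h_T^{1/2}|\bm u|_{2,T}$, this gives only $\nu h|\bm u|_2|\bm\xi|_1$, one power of $h$ short. Single-valuedness and telescoping do not change the size of $\bm\Pi_0^\partial\bm\xi-\bm\xi$. So your justification does not cover the lowest-order case, which the theorem includes.

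What is needed, and what the paper does, is to show that the first piece vanishes identically:
\[
\langle(\bm\Pi_{k-1}^o\mathbb L-\mathbb L)\bm n,\bm\Pi_{k-1}^\partial\bm\xi-\bm\xi\rangle_{\partial\mathcal T_h}=0 .
\]
First, $\langle\mathbb L\bm n,\bm\Pi_{k-1}^\partial\bm\xi-\bm\xi\rangle_{\partial\mathcal T_h}=0$. This holds because $\mathbb L=\nu\nabla\bm u\in[H^1(\Omega)]^{d\times d}$ has a single-valued normal flux, while $\bm\Pi_{k-1}^\partial\bm\xi-\bm\xi$ is single-valued and vanishes on $\Gamma$; this is where telescoping belongs. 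Second, $\langle(\bm\Pi_{k-1}^o\mathbb L)\bm n,\bm\Pi_{k-1}^\partial\bm\xi-\bm\xi\rangle_E=0$ on every face by \eqref{L2}, since $(\bm\Pi_{k-1}^o\mathbb L)\bm n|_E\in[\mathcal P_{k-1}(E)]^d$ on flat faces. Hence $E_1=\langle(\bm\Pi_{k-1}^o\mathbb L-\mathbb L)\bm n,\bm\xi-\bm\Pi_k^{\rm BDM}\bm\xi\rangle_{\partial\mathcal T_h}$, and your estimate of the BDM piece gives $\nu h^{k+1}|\bm u|_{k+1}|\bm\xi|_2$ for all $k\ge1$.

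By contrast, $E_3$ is not an obstacle. In \Cref{BDM_E} the order in $E_3$ was lost only through the inverse inequality $\|\nabla\cdot\mathbb G_h\|_T\lesssim h_T^{-1}\|\mathbb G_h\|_T$. Here instead $\|\nabla\cdot\bm\Pi_{k-1}^o\bm\phi\|_{\mathcal T_h}\lesssim|\bm\phi|_1=\nu|\bm\xi|_2$ by \eqref{2.4'}. So $|E_3|\le\|\bm\Pi_k^{\rm BDM}\bm u-\bm u\|_0\|\nabla\cdot\bm\Pi_{k-1}^o\bm\phi\|_{\mathcal T_h}\lesssim\nu h^{k+1}|\bm u|_{k+1}|\bm\xi|_2$ directly, with no integration by parts.

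Your fallback would have failed anyway. The interior moments \eqref{2.6}--\eqref{2.12} test only against $\nabla\mathcal P_{k-1}(T)$ and $\mathbf{curl}(b_T\mathcal P_{k-2}(T))$ (or $\bm P_k^*(T)$). For $k\ge3$ these do not contain $[\mathcal P_{k-2}(T)]^d$; for example, $(-y,x)$ is not in them when $d=2$, $k=3$. So $E_3$ need not vanish, which is why the paper keeps it.
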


\begingroup
\begin{proof}
	Similarly to \Cref{BDM error equation lemma}, for all $(\mathbb G_h, \bm v_h, \widehat{\bm v}_h, q_h) \in \bm{\Sigma}_h \times \bm V_h^{\bm 0} \times \widehat{\bm V}_h^{\bm 0} \times Q_h$, we have the following relation:
	\begin{align}\label{2.18}
		\mathscr B(\bm{\Pi}_{k-1}^o\bm\phi,&\bm{\Pi}_k^{\rm BDM}\bm \xi,\bm\Pi_{k-1}^{\partial}{\bm \xi},\Pi_{k-1}^o\theta;
		\mathbb G_h,\bm v_h,\widehat{\bm v}_h,q_h)	 \nonumber\\&=(\boldsymbol{\Pi}_k^{\rm BDM} \boldsymbol{u}-\boldsymbol{u}_{h},\bm v_h)	+E(\bm\phi,\bm \xi;\bm v_h,\widehat{\bm v}_h).
	\end{align}
	Now, taking $\bm v_h = \bm{\Pi}_k^{\rm BDM}\bm u - \bm u_h$ and noticing the following identity:
	\begin{align*}
		\mathscr B(\mathbb L,-\bm u,p;\mathbb G,\bm v,q)=\mathscr B(\mathbb G,-\bm v,q;\mathbb L,\bm u,p).
	\end{align*}
	Hence, 
	\begin{align*}
		\|\boldsymbol{\Pi}_k^{\rm BDM}\boldsymbol{u}-\boldsymbol{u}_{h}\|_0^2
		&=\mathscr B(\bm{\Pi}_{k-1}^o\bm\phi,\bm{\Pi}_k^{\rm BDM}\bm \xi,\bm\Pi_{k-1}^{\partial}{\bm \xi},\Pi_{k-1}^o\theta;
		\bm\xi_h^{\mathbb L},\bm\xi_h^{\bm u},\bm\xi_h^{\widehat{\bm u}},-\xi_h^{p})\\
		& \quad -E(\bm\phi,\bm \xi;\bm\xi_h^{\mathbb L},\bm\xi_h^{\bm u},\bm\xi_h^{\widehat{\bm u}})\\
		&=\mathscr B(\bm\xi_h^{\mathbb L},-\bm\xi_h^{\bm u},\bm\xi_h^{\widehat{\bm u}},-\xi_h^{p};\bm{\Pi}_{k-1}^o\bm\phi,-\bm{\Pi}_k^{\rm BDM}\bm \xi,\bm\Pi_{k-1}^{\partial}{\bm \xi},\Pi_{k-1}^o\theta)\\
		&\quad -E(\bm\phi,\bm \xi;\bm\xi_h^{\mathbb L},\bm\xi_h^{\bm u},\bm\xi_h^{\widehat{\bm u}}).
	\end{align*}
	Since $\bm\xi\in[H_0^1(\Omega)]^d$ and $\bm u\in[{H}^2(\Omega)]^d$, we can simplify the term involving the projections as follows:
	\begin{align*}
		\langle(\bm{\Pi}_{k-1}^o\mathbb L-\mathbb L)\bm n,\bm\Pi_{k-1}^{\partial}{\bm \xi}\rangle_{\partial\mathcal T_h}&=\nu\langle(\bm{\Pi}_{k-1}^o\nabla\bm u-\nabla\bm u)\bm n , \bm\Pi_{k-1}^{\partial}{\bm \xi}\rangle_{\partial\mathcal T_h}\\
		& =\nu\langle\bm{\Pi}_{k-1}^o\nabla\bm u\cdot\bm n, \bm\Pi_{k-1}^{\partial}{\bm \xi}\rangle_{\partial\mathcal T_h}\\&=\nu\langle\bm{\Pi}_{k-1}^o\nabla\bm u\cdot\bm n, {\bm \xi}\rangle_{\partial\mathcal T_h}\\
		&=\nu\langle(\bm{\Pi}_{k-1}^o\nabla\bm u-\nabla\bm u)\bm n , {\bm \xi}\rangle_{\partial\mathcal T_h}.
	\end{align*}
	By the approximation properties of projections, we have 
	\begin{align}
		E(\mathbb L,\bm u;
		\bm{\Pi}^o_{k-1}\bm\phi,
		\bm{\Pi}_k^{\rm BDM}\bm \xi,\bm\Pi_{k-1}^{\partial}{\bm \xi})
		&=	\langle(\bm{\Pi}_{k-1}^o\mathbb L-\mathbb L)\bm n,\bm\Pi_{k-1}^{\partial}{\bm \xi} -\bm{\Pi}_k^{\rm BDM}\bm \xi\rangle_{\partial\mathcal T_h}\nonumber \\
		&\quad+\nu\langle \eta(\bm{\Pi}_k^{\rm BDM}\bm u-\bm u),
		\bm{\Pi}_{k-1}^{\partial}(\bm{\Pi}_k^{\rm BDM}\bm \xi-{\bm \xi})
		\rangle_{\partial\mathcal T_h}\nonumber\\
		&\quad +(\bm{\Pi}_k^{\rm BDM}\bm u - \bm u,\nabla\cdot	\bm{\Pi}^o_{k-1}\bm\phi)_{\mathcal T_h}  \nonumber\\
		&\leq\nu\left\|\bm{\Pi}_{k-1}^o\nabla\bm u-\nabla\bm u\right\|_{\partial\mathcal T_h}\left\|\bm{\Pi}_k^{\rm BDM}\bm \xi-\bm \xi \right\|_{\partial\mathcal T_h}\nonumber\\
		&\quad+\nu \left\|\eta(\bm{\Pi}_k^{\rm BDM}\bm u-\bm u) \right\|_{\partial\mathcal T_h}\left\|\bm{\Pi}_k^{\rm BDM}\bm \xi-\bm \xi \right\|_{\partial\mathcal T_h}\nonumber\\
		&\quad  +\|\bm{\Pi}_k^{\rm BDM}\bm u - \bm u\|_{\mathcal T_h}\|\nabla\cdot	\bm{\Pi}^o_{k-1}\bm\phi\|_{\mathcal T_h}  \nonumber\\
		&\lesssim \nu h^{k+1}|\bm u|_{k+1}(|\bm\phi|_1+|\bm \xi|_2).\label{2.19}
	\end{align}
	Similarly, and from  \eqref{2.1.5}  we have
	\begin{align*}
		&E(\bm\phi,\bm \xi;
		\bm{\Pi}_{k-1}^o\mathbb L-\mathbb L_h,
		\bm{\Pi}_k^{\rm BDM}\bm u-\bm u_h,\bm{\Pi}_{k-1}^{\partial}\bm u-\widehat{\bm u}_h)\\
		&\qquad=\langle(\bm{\Pi}_{k-1}^o\bm\phi-\bm\phi)\bm n,\bm{\Pi}_{k-1}^{\partial}\bm u-\widehat{\bm u}_h-(\bm{\Pi}_k^{\rm BDM}\bm u-\bm u_h)\rangle_{\partial\mathcal T_h} \\
		&\qquad\quad+\nu\langle \eta(\bm{\Pi}_k^{\rm BDM}\bm \xi-\bm \xi),
		\bm{\Pi}_{k-1}^{\partial}(\bm{\Pi}_k^{\rm BDM}\bm u-{\bm u_h})-(\bm{\Pi}_{k-1}^{\partial}\bm u-\widehat{\bm u}_h)
		\rangle_{\partial\mathcal T_h}\\
		&\qquad\quad  +(\bm{\Pi}_k^{\rm BDM}\bm \xi - \bm \xi,\nabla\cdot(	\bm{\Pi}_{k-1}^o\mathbb L-\mathbb L_h))_{\mathcal T_h} \\
		&\qquad\leq\nu\|\bm{\Pi}_{k-1}^o\nabla\bm \xi-\nabla\bm\xi\|_{\partial\mathcal T_h}(\|\bm{\Pi}_{k-1}^{\partial}\bm u-\widehat{\bm u}_h\|_{\partial\mathcal T_h}+\|\bm{\Pi}_k^{\rm BDM}\bm u-\bm u_h\|_{\partial\mathcal T_h} )\\
		&\qquad\quad+ \nu\|\eta(\bm{\Pi}_k^{\rm BDM}\bm \xi-\bm \xi) \|_{\partial\mathcal T_h} ( \|\bm{\Pi}_{k-1}^{\partial}\bm u-\widehat{\bm u}_h \|_{\partial\mathcal T_h}+\|\bm{\Pi}_k^{\rm BDM}\bm u-\bm u_h\|_{\partial\mathcal T_h} )\\
		&\qquad\quad +\sum_{T\in\mathcal T_h}\|\bm{\Pi}_k^{\rm BDM}\bm \xi - \bm \xi\|_{T}\|\nabla\cdot(	\bm{\Pi}_{k-1}^o\mathbb L-\mathbb L_h)\|_{T} \\
		&\qquad\lesssim\nu  |\bm\xi|_2 h^{k+1}|\bm u|_{k+1}.
	\end{align*}
	Then, together with (\ref{2.19}) and regularity hypothesis (\ref{regularity}), it holds
	\begin{align*}
		\|\bm{\Pi}_k^{\rm BDM}\bm u-\bm u_h \|_0 \lesssim h^{k+1}|\bm u|_{k+1}.
	\end{align*}
	By triangle inequality and the approximation property of the $\bm\Pi_k^{\rm BDM}$, we can get the $ L^2$ error estimation for velocity.
\end{proof}

\section{RT-based \texorpdfstring{$H(\mathrm{div})$}{H(div)}-conforming HDG method with discontinuous traces}\label{sec4}
Let's refine the earlier finite element space mentioned in \Cref{first Scheme}. Likewise, it's imperative to demonstrate its well-posedness and analyze its error. Prior to delving into these aspects, let's introduce some well-established findings to aid in the analysis.
\endgroup

We first define the local Raviart--Thomas space of order $k\geq0$ as introduced in \cite{NEDELECJ.C1986Anfo, raviart1975mixed}.
\begin{align*}
	\bm{RT}_k(T)=[\mathcal{\bm P}_k(T)]^d+\bm x\mathcal{\bm P}_k(T).
\end{align*} 

The subsequent lemmas provide fundamental properties of the spaces $\bm{RT}_k(T)$, as referenced in \cite{brezzi2008mixed}.

\begin{lemma}\label{RT_lemma}
	For any $\bm v\in\bm{RT}_k(T),  \nabla\cdot\bm v|_T=0$ indicates $\bm v\in[\mathcal{\bm P}_k(T)]^d$.
\end{lemma}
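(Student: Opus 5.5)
Write $\bm v=\bm p+\bm x\,r$ with $\bm p\in[\mathcal P_k(T)]^d$ and $r\in\mathcal P_k(T)$. Split $r=r_0+\tilde r$, where $\tilde r\in\widetilde{\mathcal P}_k(T)$ is the homogeneous part of degree exactly $k$ and $r_0\in\mathcal P_{k-1}(T)$. Since $\bm x\,r_0\in[\mathcal P_k(T)]^d$, we may absorb it into $\bm p$. This gives the representation
\begin{align*}
\bm v=\bm p+\bm x\,\tilde r,\qquad \bm p\in[\mathcal P_k(T)]^d,\quad \tilde r\in\widetilde{\mathcal P}_k(T).
\end{align*}
The goal is then to show that $\nabla\cdot\bm v=0$ forces $\tilde r=0$.

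The key computation uses Euler's identity for homogeneous polynomials, $\bm x\cdot\nabla\tilde r=k\tilde r$. It yields
\begin{align*}
\nabla\cdot(\bm x\,\tilde r)=d\,\tilde r+\bm x\cdot\nabla\tilde r=(d+k)\,\tilde r .
\end{align*}
Hence
\begin{align*}
\nabla\cdot\bm v=\nabla\cdot\bm p+(d+k)\tilde r .
\end{align*}
Here $\nabla\cdot\bm p\in\mathcal P_{k-1}(T)$, while $(d+k)\tilde r$ is homogeneous of degree $k$. If $\nabla\cdot\bm v=0$, compare the homogeneous components of degree $k$. The contribution from $\nabla\cdot\bm p$ vanishes there, so $(d+k)\tilde r=0$. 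Since $d+k>0$, we get $\tilde r=0$, and therefore $\bm v=\bm p\in[\mathcal P_k(T)]^d$.

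The only point requiring care is the decomposition itself. The representation $\bm p+\bm x r$ is not unique, but the argument never needs uniqueness: it only needs existence of a representation in which the scalar factor is homogeneous of degree $k$. The homogeneity argument on the divergence then closes the proof directly. Because everything is polynomial, vanishing on $T$ (an open set) is the same as vanishing identically, so comparing homogeneous components is legitimate. I do not expect a real obstacle beyond stating Euler's identity correctly.
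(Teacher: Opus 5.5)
Your proof is correct and is the standard argument: you reduce to $\bm v=\bm p+\bm x\,\tilde r$ with $\tilde r$ homogeneous of degree $k$, Euler's identity gives $\nabla\cdot(\bm x\,\tilde r)=(d+k)\tilde r$, and this term must vanish by comparing degree-$k$ components. The paper gives no argument of its own and only cites Lemma 3.1 of \cite{brezzi2008mixed}; as far as I recall, that reference uses essentially this homogeneous-decomposition and Euler-identity proof.
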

\begin{proof}
	See \cite[Page.9, Lemma 3.1]{brezzi2008mixed}.
\end{proof}

\begin{lemma}
	For any $T\in\mathcal{T}_h$ and given $\bm v\in[H^1(T)]^d$, there exists a unique $\bm\Pi^{RT}_{k}\bm v\in\bm{RT}_k(T)$ such that
	\begin{align}
		&\langle \bm\Pi^{RT}_{k}\bm v\cdot\bm n_E,\omega_k\rangle_E=\left\langle \bm v\cdot\bm n_E,\omega_k\right\rangle_E,&&\forall \omega_k\in\mathcal{P}_k(E),E\subset\partial T,\label{4.2}\\
		&(\bm\Pi^{RT}_{k}\bm v,\bm\omega_{k-1})_T=(\bm v,\bm\omega_{k-1})_T,&&\forall \bm\omega_{k-1}\in[\mathcal{P}_{k-1}(T)]^d.\label{4.3}
	\end{align}\par
	If $k=0,\bm\Pi^{RT}_{k}\bm v$ is determined only by \eqref{4.2}. Moreover, it holds the estimates for the ${RT}$ projection:
	\begin{align}
		\|\bm v- \bm\Pi^{RT}_{k}\bm v \|_{0,T} \lesssim h^m_T|\bm v|_{m,T},\quad \forall \boldsymbol{v} \in[H^m(T)]^d, 1 \leq m \leq k+1.\label{4.4}
	\end{align}
	Furthermore, $\boldsymbol{\Pi}_k^{{RT}}$ holds the commutativity property:
	\begin{align}
		(\nabla\cdot\bm\Pi^{RT}_{k}\bm v,p)_T=(\nabla\cdot\bm v,p)_T,\quad\forall p\in\mathcal P_{k}(T).\label{4.5}
	\end{align}
\end{lemma}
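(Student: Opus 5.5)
The statement to prove is the existence, uniqueness, approximation estimate \eqref{4.4} and commutativity \eqref{4.5} of the Raviart--Thomas projection. I would follow the standard route from \cite{brezzi2008mixed,boffi2013mixed}, reducing everything to a reference element via the contravariant Piola transform.

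\textbf{Unisolvence.} The degrees of freedom \eqref{4.2}--\eqref{4.3} form a square linear system. There are $(d+1)\dim\mathcal P_k(E)$ face moments and $d\dim\mathcal P_{k-1}(T)$ interior moments. A direct count shows that their total equals $\dim\bm{RT}_k(T)$.

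It therefore suffices to show that $\bm v\in\bm{RT}_k(T)$ with all degrees of freedom vanishing implies $\bm v=\bm 0$.

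\begin{itemize}
\item For $\bm v=\bm p_k+\bm x q_k$, on each face $\bm x\cdot\bm n_E$ is constant. Hence $\bm v\cdot\bm n_E\in\mathcal P_k(E)$, and the vanishing face moments give $\bm v\cdot\bm n|_{\partial T}=0$.
\item Since $\nabla\cdot\bm v\in\mathcal P_k(T)$, write $\nabla\cdot\bm v=\nabla\cdot\bm v$ tested against itself. Green's formula gives $\|\nabla\cdot\bm v\|_T^2=-(\bm v,\nabla\nabla\cdot\bm v)_T+0$. Because $\nabla\nabla\cdot\bm v\in[\mathcal P_{k-1}(T)]^d$, this vanishes by \eqref{4.3}.
\item Thus $\nabla\cdot\bm v=0$, and \Cref{RT_lemma} gives $\bm v\in[\mathcal P_k(T)]^d$ with zero normal trace.
\end{itemize}

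\textbf{Main obstacle.} The last step is showing that a divergence-free $\bm v\in[\mathcal P_k]^d$ with $\bm v\cdot\bm n=0$ and $(\bm v,\bm\omega_{k-1})_T=0$ for all $\bm\omega_{k-1}\in[\mathcal P_{k-1}]^d$ must vanish. Here I would use the homogeneous decomposition $\bm v=\bm p_k+\bm x\tilde q_k$ with $\tilde q_k$ homogeneous of degree $k$. Since $\nabla\cdot(\bm x\tilde q_k)=(d+k)\tilde q_k$ for homogeneous $\tilde q_k$, the divergence-free condition forces $\tilde q_k=0$ in the leading part.

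I would then represent $\bm v$ using the bubble-type characterization in \cite[Lemma 3.1 ff.]{brezzi2008mixed}: $\bm v\cdot\bm n=0$ on each face means $\lambda_E$ divides $\bm v\cdot\bm n_E$. The interior-moment orthogonality then yields $\bm v=\bm 0$. If this becomes intricate, I will simply cite the unisolvence result of \cite{raviart1975mixed,NEDELECJ.C1986Anfo}.

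\textbf{Commutativity \eqref{4.5}.} For $p\in\mathcal P_k(T)$, integration by parts gives
\[(\nabla\cdot\bm\Pi^{RT}_k\bm v,p)_T=-(\bm\Pi^{RT}_k\bm v,\nabla p)_T+\langle\bm\Pi^{RT}_k\bm v\cdot\bm n,p\rangle_{\partial T}.\]
Since $\nabla p\in[\mathcal P_{k-1}]^d$ and $p|_E\in\mathcal P_k(E)$, \eqref{4.3} and \eqref{4.2} replace $\bm\Pi^{RT}_k\bm v$ by $\bm v$ in both terms. Integrating back by parts gives \eqref{4.5}.

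\textbf{Approximation \eqref{4.4}.} I would map to the reference element with the Piola transform. The degrees of freedom are preserved under this map, so the projection commutes with it. On $\widehat T$ the projection is bounded from $[H^1]^d$, by the trace theorem for the face moments, and it reproduces $[\mathcal P_{m-1}]^d\subset\bm{RT}_k$ for $m\le k+1$.

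The Bramble--Hilbert lemma then gives $\|\hat{\bm v}-\widehat{\bm\Pi}\hat{\bm v}\|\lesssim|\hat{\bm v}|_{m}$. Scaling back under shape regularity yields the factor $h_T^m$. This is the same argument that underlies \eqref{BDM error}.
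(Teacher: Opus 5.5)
Your proposal is correct and is essentially the standard argument behind the paper's proof, which consists only of citations to \cite{brezzi2008mixed} for unisolvence and to \cite{du2019invitation} for \eqref{4.4}--\eqref{4.5}; your Green's-formula proof of commutativity and your Piola/Bramble--Hilbert proof of the estimate are exactly what those sources do. The one step you leave as a bare assertion is routine to finish, and the homogeneous-decomposition detour is unnecessary. Once $\bm v\in[\mathcal P_k(T)]^d$ has zero normal trace, write $\bm v\cdot\nabla\lambda_j=\lambda_j r_j$ with $r_j\in\mathcal P_{k-1}(T)$, where $\lambda_1,\dots,\lambda_d$ are the barycentric coordinates vanishing on $d$ of the $d+1$ faces, and test \eqref{4.3} with $\bm\omega_{k-1}=\sum_{j=1}^d r_j\nabla\lambda_j$. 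This gives $\sum_j(\lambda_j r_j,r_j)_T=0$, hence $r_j=0$ for all $j$, and $\bm v=\bm 0$ because $\{\nabla\lambda_j\}_{j=1}^d$ spans $\mathbb R^d$.
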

\begin{proof}
	References for equations \eqref{4.2} and \eqref{4.3} can be found in \cite[Page 10, Lemma 3.2]{brezzi2008mixed}, for \eqref{4.4}, refer to \cite[Page 13, Proposition 2.3]{du2019invitation}, and for \eqref{4.5}, see \cite[Page 11, Proposition 2.2]{du2019invitation}.
\end{proof}

\subsection{The Discrete Scheme}
We maintain consistent notation while addressing the subsequent finite element spaces. When $ k\ge 1$, we define
\begin{align*}
	\bm{\Sigma}_h&=\{\mathbb L_h\in \bm L^2(\Omega):\mathbb L_h|_T\in [\mathcal P_{k}(T)]^{d\times d},\forall T\in\mathcal T_h\},\\
	\bm {V^g}_h&=\{\bm v_h\in  {\bm H({\rm div};\Omega)}: \bm v_h|_T\in\bm{RT}_k(T),\forall T\in\mathcal T_h,\bm v_h\cdot \bm n |_{\partial \Omega}=\Pi_k^{\partial} (\bm g\cdot\bm n)\},\\
	\widehat{\bm V}_h^{\bm g}&=\{\widehat{\bm v}_h\in L^2(\mathcal E_h):\widehat{\bm v}_h|_E\in [\mathcal P_{k}(E)]^d,\forall E\in\mathcal E_h,	\widehat{\bm v}_h|_{\Gamma}=\bm{\Pi}_{k}^{\partial}\bm g
	\},\\
	Q_h&=\{q_h\in L_0^2(\Omega):q_h|_T\in P_{k}(T),\forall T\in\mathcal T_h\}.
\end{align*}

\begingroup
Find $(\mathbb L_h,\bm u_h,\widehat{\bm u}_h,q_h)\in \bm{\Sigma}_h\times\bm {V^g}_h\times \widehat{\bm V}_h^{\bm g}\times Q_h$ such that 
\begin{subequations}
	\begin{align}
		\nu^{-1}(\mathbb L_h,\mathbb G_h)_{\mathcal T_h}+(\bm u_h,\nabla\cdot\mathbb G_h)_{\mathcal T_h}
		-\langle \widehat{\bm u}_h ,\mathbb G_h\bm n \rangle_{\partial\mathcal T_h}&= 0,\\
		-(\nabla\cdot\mathbb L_h,\bm v_h)_{\mathcal T_h}
		+\langle\mathbb L_h\bm n,\widehat{\bm v}_h \rangle_{\partial\mathcal T_h} -(p_h,\nabla \cdot\bm v_h)_{\mathcal T_h} \nonumber
		&\\
		+\nu\langle \eta(\bm{\Pi}_{k}^{\partial}\bm u_h-\widehat{\bm u}_h ),
		\bm{\Pi}_{k}^{\partial}\bm v_h-\widehat{\bm v}_h
		\rangle_{\partial\mathcal T_h}
		&=(\bm f,\bm v_h)_{\mathcal T_h},\\
		(\nabla\cdot\bm u_h,q_h)_{\mathcal T_h}&=0
	\end{align}
\end{subequations}
{
	holds for all $(\mathbb G_h,\bm v_h,\widehat{\bm v}_h,q_h)\in \bm{\Sigma}_h\times\bm V_h^{\bm 0}\times \widehat{\bm V}_h^{\bm 0}\times Q_h$,
	To simplify the notations we  define
}
\begin{align*}
	&\mathscr R(\mathbb L_h,\bm u_h,\widehat{\bm u}_h,p_h;
	\mathbb G_h,\bm v_h,\widehat{\bm v}_h, q_h)\\
	&\qquad= 	\nu^{-1}(\mathbb L_h,\mathbb G_h)_{\mathcal T_h}+(\bm u_h,\nabla\cdot\mathbb G_h)_{\mathcal T_h}
	-\langle \widehat{\bm u}_h,\mathbb G_h\bm n \rangle_{\partial\mathcal T_h}\\
	&\qquad\quad	-(\nabla\cdot\mathbb L_h,\bm v_h)_{\mathcal T_h}	+\langle\mathbb L_h\bm n,\widehat{\bm v}_h \rangle_{\partial\mathcal T_h} -(p_h,\nabla \cdot\bm v_h)_{\mathcal T_h}\\
	&\qquad\quad	+\nu\langle \eta(\bm{\Pi}_{k}^{\partial}\bm u_h-\widehat{\bm u}_h),
	\bm{\Pi}_{k}^{\partial}\bm v_h-\widehat{\bm v}_h
	\rangle_{\partial\mathcal T_h}
	+ (\nabla\cdot\bm u_h,q_h)_{\mathcal T_h}.
\end{align*}
The corresponding HDG method reads:  {Find $(\mathbb L_h,\bm u_h,\widehat{\bm u}_h,q_h)\in \bm{\Sigma}_h\times\bm {V^g}_h\times \widehat{\bm V}_h^{\bm g}\times Q_h$ such that }
\begin{align}\label{second scheme}
	\mathscr R(\mathbb L_h,\bm u_h,\widehat{\bm u}_h,p_h;
	\mathbb G_h,\bm v_h,\widehat{\bm v}_h,q_h)=	(\bm f,\bm v_h)_{ \mathcal T_h}
\end{align}
{holds for all $(\mathbb G_h,\bm v_h,\widehat{\bm v}_h,q_h)\in \bm{\Sigma}_h\times\bm V_h^{\bm 0}\times \widehat{\bm V}_h^{\bm 0}\times Q_h$.}

\subsection{Well-Posedness of the Scheme}
We next establish well-posedness of the RT-based scheme \eqref{second scheme}. As in \Cref{sec3}, we work with the norm
\begin{align}\label{norm2}
	\interleave(\mathbb G_h,\bm v_h,\widehat{\bm v}_h) \interleave ^2	
	&:=\nu^{-1}\|\mathbb G_h\|_{\mathcal T_h}^2
	+\nu\|\nabla\bm v_h\|_{\mathcal T_h}^2
	+\nu\|\eta^{1/2}(\bm\Pi_{k}^{\partial}\bm v_h-\widehat{\bm v}_h)\|_{\partial\mathcal T_h}^2.
\end{align}
As in \Cref{check norm1}, one verifies that \eqref{norm2} defines a norm on $\bm{\Sigma}_h\times\bm V_h^{\bm 0}\times\widehat{\bm V}_h^{\bm 0}$.
\begin{theorem}\label{LBB2}
	We have 
	\begin{align}
		\sup_{\bm 0\neq (\mathbb G_h,\bm v_h,\widehat{\bm v}_h,q_h)\in \bm{\Sigma}_h\times\bm V_h^{\bm 0}\times \widehat{\bm V}_h^{\bm 0}\times Q_h}\frac{\mathscr R(\mathbb L_h,\bm u_h,\widehat{\bm u}_h,p_h;
			\mathbb G_h,\bm v_h,\widehat{\bm v}_h,q_h)}{	\interleave(\mathbb G_h,\bm v_h,\widehat{\bm v}_h)\interleave+\|q_h\|_{\mathcal T_h}}
		\ge \beta( 	\interleave(\mathbb L_h,\bm u_h,\widehat{\bm u}_h)\interleave+\|p_h\|_{\mathcal T_h}  ),\label{RTlbb1}\\
		\sup_{\bm 0\neq (\mathbb G_h,\bm v_h,\widehat{\bm v}_h,q_h)\in \bm{\Sigma}_h\times\bm V_h^{\bm 0}\times \widehat{\bm V}_h^{\bm 0}\times Q_h}\frac{	\mathscr R(
			\mathbb G_h,\bm v_h,\widehat{\bm v}_h,q_h;\mathbb L_h,\bm u_h,\widehat{\bm u}_h,p_h)}{	\interleave(\mathbb G_h,\bm v_h,\widehat{\bm v}_h)\interleave+\|q_h\|_{\mathcal T_h}}
		\ge \beta( 	\interleave(\mathbb L_h,\bm u_h,\widehat{\bm u}_h)\interleave+\|p_h\|_{\mathcal T_h}  ).\label{4.10}
	\end{align}
\end{theorem}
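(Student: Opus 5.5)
The plan is to reproduce the four-step argument of \Cref{Theorem 2.3} for $\mathscr R$, replacing $\bm\Pi_k^{\rm BDM}$ with $\bm\Pi_k^{RT}$ and $\bm\Pi_{k-1}^{\partial}$ with $\bm\Pi_k^{\partial}$. We only prove \eqref{RTlbb1}; \eqref{4.10} follows in the same way, since the two forms differ only in the sign structure of the off-diagonal terms.

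\textbf{Step 1 (diagonal test).} Test with $(\mathbb L_h,\bm u_h,\widehat{\bm u}_h,p_h)$. The coupling terms cancel exactly as before, giving
\[
\nu^{-1}\|\mathbb L_h\|_{\mathcal T_h}^2+\nu\|\eta^{1/2}(\bm\Pi_k^{\partial}\bm u_h-\widehat{\bm u}_h)\|_{\partial\mathcal T_h}^2 .
\]

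\textbf{Step 2 (gradient control).} Test with $(-\nu\nabla\bm u_h,\bm 0,\bm 0,0)$. This is admissible because $\nabla\bm u_h|_T$ has degree at most $k$ for $\bm u_h\in\bm{RT}_k(T)$, so it lies in $\bm\Sigma_h$. That is precisely why $\bm\Sigma_h$ now uses $\mathcal P_k$ rather than $\mathcal P_{k-1}$.

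After integrating by parts we must handle the boundary term $\nu\langle \bm u_h-\widehat{\bm u}_h,\nabla\bm u_h\bm n\rangle_{\partial\mathcal T_h}$. The difficulty is that $\nabla\bm u_h\bm n|_E$ may have degree $k$ on a face, while we want to replace $\bm u_h$ by $\bm\Pi_k^{\partial}\bm u_h$. This replacement is still valid: $\nabla\bm u_h$ has degree at most $k$, so its trace lies in $[\mathcal P_k(E)]^d$, and $L^2(E)$-orthogonality of $\bm\Pi_k^{\partial}$ applies. The estimate then closes with the discrete trace inequality \eqref{trace_2} and Young's inequality, yielding $\tfrac12\nu\|\nabla\bm u_h\|^2$ minus multiples of the Step 1 quantities.

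\textbf{Step 3 (pressure control).} Take $\bm v\in[H_0^1(\Omega)]^d$ with $\nabla\cdot\bm v=p_h$ and $|\bm v|_1\lesssim\|p_h\|$, and test with $(0,\bm\Pi_k^{RT}\bm v,\bm\Pi_k^{\partial}\bm v,0)$. Boundary condition \eqref{4.2} guarantees $\bm\Pi_k^{RT}\bm v\in\bm V_h^{\bm 0}$. Commutativity \eqref{4.5} with $p_h|_T\in\mathcal P_k(T)$ gives $(p_h,\nabla\cdot\bm\Pi_k^{RT}\bm v)=\|p_h\|^2$.

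The remaining terms are bounded as in \eqref{2.26'}–\eqref{2.29}, which requires an $H^1$-stability bound $\|\nabla\bm\Pi_k^{RT}\bm v\|_{\mathcal T_h}\lesssim|\bm v|_1$. We obtain it by the same route as in the BDM case: the inverse inequality \eqref{trace_1}, applied to the polynomial $\bm\Pi_k^{RT}\bm v-\bm\Pi^o_{k+1}\bm v$ (which has degree at most $k+1$), combined with \eqref{4.4} at $m=1$ and \eqref{2.4'}. Trace terms such as $\|\mathfrak h^{-1/2}(\bm v-\bm\Pi_k^{RT}\bm v)\|_{\partial\mathcal T_h}$ are handled with the continuous local trace inequality together with the same stability bound.

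\textbf{Step 4 (combination and norm bound).} Form a combination of the three test tuples: a large multiple of the Step 1 tuple (large enough to absorb the negative terms), plus the Step 2 tuple, plus the negative of the Step 3 tuple. This gives $\mathscr R\ge\tfrac12(\interleave\cdot\interleave^2+\|p_h\|^2)$.

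It remains to bound the norm of the test function by $\interleave(\mathbb L_h,\bm u_h,\widehat{\bm u}_h)\interleave+\|p_h\|$, as done at the end of the proof of \Cref{Theorem 2.3}.

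\textbf{Main obstacle: $\nu$-robustness.} The $\nu$-weighting of the Step 3 terms needs care. The $\|\mathbb L_h\|\|p_h\|$ coupling must be absorbed into $\nu^{-1}\|\mathbb L_h\|^2$. The stabilization coupling carries a factor $\nu$, and the Step 3 tuple has norm $\nu^{1/2}\|p_h\|$. To keep $\beta$ independent of $\nu$, we would scale the Step 3 test function by a $\nu$-dependent factor, or else accept the same constants as in the BDM proof.
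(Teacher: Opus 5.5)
Your proposal follows the paper's proof essentially step for step: the same four test tuples, the same Step~4 combination (the paper uses the Step~1 multiplier $1+C_1^2+\nu(C_4^2+C_5^2)$), and the same final norm bound on the test tuple. Your use of $\Pi^o_{k+1}$ in the $H^1$-stability bound for $\bm\Pi_k^{RT}$ is actually cleaner than the paper's $\bm\Pi_k$, since $\bm\Pi_k^{RT}\bm v\notin[\mathcal P_k(T)]^d$ in general.

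On your $\nu$ concern: for $\nu\lesssim 1$ no rescaling is needed, because the multiplier and the Step~3 tuple norm $\nu^{1/2}\|p_h\|_{\mathcal T_h}$ are then bounded, and the paper tacitly relies on this. For $\nu\to\infty$ no rescaling can help: for the trial tuple $(0,\bm 0,\bm 0,p_h)$ the form reduces to $-(p_h,\nabla\cdot\bm v_h)_{\mathcal T_h}$, and $|(p_h,\nabla\cdot\bm v_h)_{\mathcal T_h}|\le\sqrt{d}\,\nu^{-1/2}\|p_h\|_{\mathcal T_h}\interleave(\mathbb G_h,\bm v_h,\widehat{\bm v}_h)\interleave$. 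This forces $\beta\lesssim\nu^{-1/2}$ with the unweighted pressure norm.
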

\begin{proof}
	The proof is carried out in four steps.
	
	\noindent\textbf{Step 1.} Considering $\left( \mathbb G_1,\boldsymbol{v}_1,\widehat{\boldsymbol{v}}_{1},q_h\right)=\left( \mathbb{L}_{h}, \boldsymbol{u}_{h}, \widehat{\boldsymbol{u}}_{h},  p_{h} \right)\in  \bm{\Sigma}_h\times\bm V_h^{\bm 0}\times \widehat{\bm V}_h^{\bm 0}\times Q_h$, and utilizing the definition of $\mathscr R$, it readily follows that
	\begin{align}	
		&\mathscr{R}\left(\mathbb{L}_{h}, \boldsymbol{u}_{h}, \widehat{\boldsymbol{u}}_{h}, p_{h} ;\mathbb{L}_{h}, \boldsymbol{u}_{h}, \widehat{\boldsymbol{u}}_{h}, p_{h}\right)=\nu^{-1}\left\|\mathbb L_h\right\|_{\mathcal T_h}^2+\nu\|\eta^{1/2}(\bm\Pi_{k}^{\partial}\bm u_h-\widehat{\bm u}_h)\|_{\partial\mathcal T_h}^2.\label{4.11}
	\end{align}
	\textbf{Step 2.} Subsequently, considering $\left( \mathbb G_2,\boldsymbol{v}_2,\widehat{\boldsymbol{v}}_{2},q_h\right)=\left(-\nu\nabla\bm u_h , \bm 0, \bm 0,0 \right)\in  \bm{\Sigma}_h\times\bm V_h^{\bm 0}\times \widehat{\bm V}_h^{\bm 0}\times Q_h $ and employing Green's formula leads to
	\begin{align*}
		\mathscr{R}&\left(\mathbb{L}_{h}, \boldsymbol{u}_{h}, \widehat{\boldsymbol{u}}_{h}, p_{h} ;-\nu\nabla\bm u_h , \bm 0, \bm 0,0 \right)\\
		&=	(\mathbb L_h,-\nabla\bm u_h)_{\mathcal T_h}-\nu(\bm u_h,\Delta\bm u_h)_{\mathcal T_h}
		+\nu\langle \widehat{\bm u}_h,\nabla\bm u_h\bm n \rangle_{\partial\mathcal T_h}\\
		&=	(\mathbb L_h,-\nabla\bm u_h)_{\mathcal T_h}+\nu(\nabla\bm u_h,\nabla\bm u_h)_{\mathcal T_h}-\nu\langle {\bm u}_h- \widehat{\bm u}_h,\nabla\bm u_h \bm n \rangle_{\partial\mathcal T_h}.
	\end{align*}
	Applying the Cauchy-Schwarz inequality to the first two terms, we establish that
	\begin{align}\label{4.12}
		(\mathbb L_h,-\nabla\bm u_h)_{\mathcal T_h}+\nu(\nabla\bm u_h,\nabla\bm u_h)_{\mathcal T_h}\geq \nu\left\| \nabla\bm u_h\right\|^2_{\mathcal T_h}-\left\| \mathbb L_h\right\|_{\mathcal T_h} \left\| \nabla\bm u_h\right\|_{\mathcal T_h}.
	\end{align}
	The definition of $\bm\Pi^{\partial}_{k}$ in (\ref{L2}) indicates
	\begin{align}
		-\nu\langle {\bm u}_h- \widehat{\bm u}_h,\nabla\bm u_h \bm n \rangle_{\partial\mathcal T_h}&=-\nu\langle \bm\Pi^{\partial}_{k}{\bm u}_h- \widehat{\bm u}_h,\nabla\bm u_h \bm n \rangle_{\partial\mathcal T_h}\nonumber\\
		&\quad+\nu\langle \bm\Pi^{\partial}_{k}{\bm u}_h- {\bm u}_h,\nabla\bm u_h \bm n \rangle_{\partial\mathcal T_h}\nonumber\\
		&=-\nu\langle \bm\Pi^{\partial}_{k}{\bm u}_h- \widehat{\bm u}_h,\nabla\bm u_h \bm n \rangle_{\partial\mathcal T_h}\nonumber\\
		&\geq-\nu C_1\|\eta^{1/2}( \bm\Pi^{\partial}_{k}{\bm u}_h- \widehat{\bm u}_h)\| _{\partial\mathcal T_h}\left\| \nabla\bm u_h\right\|_{\mathcal T_h}.\label{4.13}
	\end{align}
	In conjunction with \eqref{4.12} and \eqref{4.13}, upon employing Young's inequality, we derive
	\begin{align}
		\mathscr{R}&\left(\mathbb{L}_{h},\boldsymbol{u}_{h}, \widehat{\boldsymbol{u}}_{h}, p_{h} ;-\nu\nabla\bm u_h , \bm 0, \bm 0,0 \right)\nonumber\\
		& 	\geq\nu\left\| \nabla\bm u_h\right\|^2_{\mathcal T_h}-\left\| \mathbb L_h\right\|_{\mathcal T_h} \left\| \nabla\bm u_h\right\|_{\mathcal T_h}-\nu C_1\|\eta^{1/2}( \bm\Pi^{\partial}_{k}{\bm u}_h- \widehat{\bm u}_h)\| _{\partial\mathcal T_h}\left\| \nabla\bm u_h\right\|_{\mathcal T_h}\nonumber\\
		&\geq\nu\left\| \nabla\bm u_h\right\|^2_{\mathcal T_h}-(\dfrac{1}{4}\nu\left\| \nabla\bm u_h\right\|^2_{\mathcal T_h}+\nu^{-1}\left\| \mathbb L_h\right\|^2_{\mathcal T_h})\nonumber\\
		&\quad-(\dfrac{1}{4}\nu\left\| \nabla\bm u_h\right\|^2_{\mathcal T_h}+\nu C_1^2\|\eta^{1/2}( \bm\Pi^{\partial}_{k}{\bm u}_h- \widehat{\bm u}_h)\|^2 _{\partial\mathcal T_h})\nonumber\\
		&\geq\dfrac{1}{2}\nu\left\| \nabla\bm u_h\right\|^2_{\mathcal T_h}-(\nu^{-1}\left\| \mathbb L_h\right\|^2_{\mathcal T_h}+\nu C_1^2\|\eta^{1/2}( \bm\Pi^{\partial}_{k}{\bm u}_h- \widehat{\bm u}_h)\|^2 _{\partial\mathcal T_h}).\label{4.14}
	\end{align}
	\textbf{Step 3.} Given $p_h \in L_{0}^{2}(\Omega)$, the continuous inf-sup condition ensures the existence of $\boldsymbol{v} \in H_{0}^{1}(\Omega)$, such that
	\begin{align}
		\nabla \cdot \boldsymbol{v}=p_h,\qquad|\boldsymbol{v}|_{H^1(\Omega)} \lesssim\left\|p_h\right\|_{L^2(\Omega)}.
	\end{align}
	Subsequently, by considering  $\left( \mathbb G_3,\boldsymbol{v}_{ 3},\widehat{\boldsymbol{v}}_{3},q_h\right)=\left(0 , \boldsymbol{\Pi}_k^{RT}\bm v, \bm{\Pi}_{k}^{\partial}\bm v,0 \right)\in  \bm{\Sigma}_h\times\bm V_h^{\bm 0}\times \widehat{\bm V}_h^{\bm 0}\times Q_h  $, we obtain
	\begin{align*}
		&\mathscr{R}\left(\mathbb{L}_{h}, \boldsymbol{u}_{h}, \widehat{\boldsymbol{u}}_{h}, p_{h} ;0,\boldsymbol{\Pi}_k^{RT}\bm v ,\bm{\Pi}_{k}^{\partial}\bm v, 0\right)\\
		&\qquad= -(\nabla\cdot\mathbb L_h,\boldsymbol{\Pi}_k^{RT}\bm v)_{\mathcal T_h}	+\langle\mathbb L_h\bm n,\bm{\Pi}_{k}^{\partial}\bm v \rangle_{\partial\mathcal T_h} -(p_h,\nabla \cdot\boldsymbol{\Pi}_k^{RT}\bm v)_{\mathcal T_h}\\
		&\qquad\quad+\nu\langle \eta(\bm{\Pi}_{k}^{\partial}\bm u_h-\widehat{\bm u}_h ),
		\bm{\Pi}_{k}^{\partial}\boldsymbol{\Pi}_k^{RT} \boldsymbol{v} -\bm{\Pi}_{k}^{\partial}\bm v\rangle_{\partial\mathcal T_h}\\
		&\qquad= (\mathbb L_h,\nabla\boldsymbol{\Pi}_k^{RT}\bm v)_{\mathcal T_h}+\langle\mathbb L_h\bm n,\bm{\Pi}_{k}^{\partial}\bm v-\boldsymbol{\Pi}_k^{RT}\bm v \rangle_{\partial\mathcal T_h}-(p_h,\nabla \cdot\boldsymbol{\Pi}_k^{RT}\bm v)_{\mathcal T_h}\\
		&\qquad\quad+\nu\langle \eta(\bm{\Pi}_{k}^{\partial}\bm u_h-\widehat{\bm u}_h ),
		\bm{\Pi}_{k}^{\partial}\boldsymbol{\Pi}_k^{RT} \boldsymbol{v} -\bm{\Pi}_{k}^{\partial}\bm v\rangle_{\partial\mathcal T_h}.\qquad\text{(Green's formula)}
	\end{align*}
	Utilizing the properties of $\boldsymbol{\Pi}_k^{RT}$ and $\bm{\Pi}_{k}$ as described in \eqref{4.4} and detailed in \Cref{projection properties}, it is evident that
	\begin{align}
		(\mathbb L_h,&\nabla\boldsymbol{\Pi}_k^{RT}\bm v)_{\mathcal T_h}\nonumber\\&\leq\left\|\mathbb L_h\right\|_{\mathcal T_h}\|\nabla\boldsymbol{\Pi}_k^{RT}\bm v\|_{\mathcal T_h}&&\text{(Cauchy-Schwarz inequality)}\nonumber\\
		&\leq \left\|\mathbb L_h\right\|_{\mathcal T_h}( \| \nabla \bm v \|_{\mathcal T_h}+\|\nabla\boldsymbol{\Pi}_k^{RT}\bm v-\nabla \bm v\|_{\mathcal T_h}) &&\text{(triangle inequality)} \nonumber \\
		&=\left\|\mathbb L_h\right\|_{\mathcal T_h}( |\bm v|_{ 1}+\|\nabla(\boldsymbol{\Pi}_k^{RT}\bm v-\bm v)\|_{\mathcal T_h}).\label{4.17} &&\text{(definition of semi-norm)}
	\end{align}
	Turning our attention to the second term within the parentheses of \eqref{4.17}, upon employing the triangle inequality, we obtain
	\begin{align*}
		&\left\|\nabla(\boldsymbol{\Pi}_k^{RT}\bm v-\bm v)\right\|_{\mathcal T_h}\nonumber\\
		&\qquad \leq\left\| \nabla(\bm v-\bm{\Pi}_{k} \bm v)\right\| _{\mathcal T_h}+\left\| \nabla(\boldsymbol{\Pi}_k^{RT}\bm v-\bm{\Pi}_{k}\bm v)\right\| _{\mathcal T_h}\nonumber \\
		&\qquad\leq c_1\left|\bm v\right|_{ 1}+c_2\left\|\mathfrak{h}^{-1} \bm{\Pi}_{k}(\boldsymbol{\Pi}_k^{RT}\bm v-\bm v)\right\| _{\mathcal T_h}\nonumber &&\text{(by \eqref{2.4'} and inverse inequality)}\\
		&\qquad\leq c_1|\bm v|_{ 1}+ {c_2}\left\|\mathfrak{h}^{-1}(\boldsymbol{\Pi}_k^{RT}\bm v-\bm v)\right\|_{\mathcal T_h}\nonumber &&\text{(by property of projection \eqref{2.7})}\\
		&\qquad\leq c_1|\bm v|_{ 1}+ {c_3}|\bm v|_{ 1}\nonumber &&\text{(by property of projection \eqref{4.4})}\\
		&\qquad=C|\bm v|_{ 1}.
	\end{align*}
	Here, $\bm{\Pi}_{k}: [L^2(T)]^d\rightarrow [{\mathcal{P}}_k(T)]^d$ denotes the $L^2$ projection. The constants $c_1$, $c_2$, and $ {c_3}$ are positive and remain independent of $h$, $h_T$, and $\nu$. In conjunction with \eqref{4.17}, this yields
	\begin{align}\label{4.18}
		(\mathbb L_h,\nabla\boldsymbol{\Pi}_k^{RT}\bm v)_{\mathcal T_h}
		\leq(1+C)\left\|\mathbb L_h\right\|_{\mathcal T_h}|\bm v|_{ 1} 
		\leq C_2\left\|\mathbb L_h\right\|_{\mathcal T_h}\left\|p_h\right\|_{\mathcal T_h}.
	\end{align}
	Furthermore, leveraging the commutativity property of $\boldsymbol{\Pi}_k^{RT}$ as outlined in \eqref{4.5}, we obtain
	\begin{align}\label{4.19}
		(p_h,\nabla \cdot\boldsymbol{\Pi}_k^{RT}\bm v)_{\mathcal T_h}=(p_h,\nabla \cdot\bm v)_{\mathcal T_h}=\|p_h\|^2_{\mathcal T_h}.
	\end{align}
	Utilizing the trace inequality and approximations provided in \eqref{2.4'} and \eqref{4.4}, we arrive at
	\begin{align}\label{4.20}
		\langle\mathbb L_h\bm n,\bm{\Pi}_{k}^{\partial}\bm v-\bm{\Pi}_k^{RT}\bm v \rangle_{\partial\mathcal T_h}&=\langle\mathbb L_h\bm n,\bm{\Pi}_{k}^{\partial}\bm v-\bm v \rangle_{\partial\mathcal T_h}+	\langle\mathbb L_h\bm n,\bm v-\bm{\Pi}_k^{RT}\bm v \rangle_{\partial\mathcal T_h}\nonumber\\
		& \leq\|\mathfrak{h}^{1/2}\mathbb L_h\|_{\partial\mathcal T_h}( \|\mathfrak{h}^{-1/2}(\bm{\Pi}_{k}^{\partial}\bm v-\bm v) \|_{\partial\mathcal T_h}\\
		&\quad +\|\mathfrak{h}^{-1/2} (\bm v-\boldsymbol{\Pi}_k^{RT}\bm v)\|_{\partial\mathcal T_h}) \nonumber\\
		&\leq \|\mathbb L_h\|_{\mathcal T_h}(                                                                                                                                                                                                                       C_1|\bm v |_{ 1}+C_2|\bm v |_{ 1})\nonumber\\
		&\leq C_3\left\|\mathbb L_h\right\|_{\mathcal T_h}\left\|p_h\right\|_{\mathcal T_h}.
	\end{align}
	Similar to the previous analysis, we also have
	\begin{align}\label{4.21}
		\nu\langle \eta(\bm{\Pi}_{k}^{\partial}\bm u_h-\widehat{\bm u}_h ), &
		\bm{\Pi}_{k}^{\partial}\boldsymbol{\Pi}_k^{RT} \boldsymbol{v} -\bm{\Pi}_{k}^{\partial}\bm v\rangle_{\partial\mathcal T_h}\nonumber\\
		&\leq\nu\|\eta(\bm{\Pi}_{k}^{\partial}\bm u_h-\widehat{\bm u}_h )\|_{\partial\mathcal T_h}\|	\bm{\Pi}_{k}^{\partial}(\boldsymbol{\Pi}_k^{RT} \boldsymbol{v} -\bm v)\|_{\partial\mathcal T_h}\nonumber\\
		&\leq C\nu\|\eta^{1/2}(\bm{\Pi}_{k}^{\partial}\bm u_h-\widehat{\bm u}_h )\|_{\partial\mathcal T_h}|\bm v|_{ 1}\nonumber\\
		&\leq C_4\nu\|\eta^{1/2}(\bm{\Pi}_{k}^{\partial}\bm u_h-\widehat{\bm u}_h )\|_{\partial\mathcal T_h}\|p_h\|_{\mathcal T_h}.
	\end{align}
	Together with \eqref{4.18}, \eqref{4.19}, \eqref{4.20} and \eqref{4.21}, we obtain
	\begin{align}
		\mathscr{R}&\left(\mathbb{L}_{h}, \boldsymbol{u}_{h}, \widehat{\boldsymbol{u}}_{h}, p_{h} ;0,-\boldsymbol{\Pi}_k^{RT}\bm v ,-\bm{\Pi}_{k}^{\partial}\bm v, 0\right)\nonumber\\
		&\qquad\geq\left\|p_h\right\|^2_{\mathcal T_h}-(C_2+C_3)\left\|\mathbb L_h\right\|_{\mathcal T_h}\left\|p_h\right\|_{\mathcal T_h}-C_4\nu\|\eta^{1/2}(\bm{\Pi}_{k}^{\partial}\bm u_h-\widehat{\bm u}_h )\|_{\partial\mathcal T_h}\|p_h\|_{\mathcal T_h}\nonumber\\
		&\qquad\geq\dfrac{1}{2}\left\|p_h\right\|^2_{\mathcal T_h}-(C_2+C_3)^2\left\|\mathbb L_h\right\|_{\mathcal T_h}^{2}-C_4^2\nu^2\|\eta^{1/2}(\bm{\Pi}_{k}^{\partial}\bm u_h-\widehat{\bm u}_h )\|^2_{\partial\mathcal T_h}\nonumber\\
		&\qquad=\dfrac{1}{2}\left\|p_h\right\|^2_{\mathcal T_h}-C_5^2\left\|\mathbb L_h\right\|_{\mathcal T_h}^{2}-C_4^2\nu^2\|\eta^{1/2}(\bm{\Pi}_{k}^{\partial}\bm u_h-\widehat{\bm u}_h )\|^2_{\partial\mathcal T_h}, \label{4.22}
	\end{align}
	where $C_1,C_2,C_3,C_4,C_5=C_2+C_3$ is positive constants independent of $h,h_T$ and the fluid viscosity coefficient $\nu$.
	
	\noindent\textbf{Step 4.} By considering the implications of \eqref{4.11}, \eqref{4.14}, and \eqref{4.22}, when testing $(\mathbb G_h,\bm v_h,\widehat{\bm v}_h,q_h)$ with $\mathbb G_h=(1+C_1^2+\nu C_4^2+\nu C_5^2)\mathbb G_1+\mathbb G_2+\mathbb G_3\in\bm{\Sigma}_h$, $\boldsymbol{v}_h=(1+C_1^2+\nu C_4^2+\nu C_5^2)\boldsymbol{v}_1+\boldsymbol{v}_2+\boldsymbol{v}_3\in\bm V_h^{\bm 0}$, $\widehat{\boldsymbol{v}}_{h}=(1+C_1^2+\nu C_4^2+\nu C_5^2)\widehat{\boldsymbol{v}}_{1}+\widehat{\boldsymbol{v}}_{2}+\widehat{\boldsymbol{v}}_{3}\in\widehat{\bm V}_h^{\bm 0}$, and $q_h=p_h\in Q_h$, and subsequently summing the results, we obtain
	\begin{align*}
		\mathscr R(\mathbb L_h,\bm u_h,\widehat{\bm u}_h,p_h;
		\mathbb G_h,\bm v_h,\widehat{\bm v}_h,q_h)\geq\dfrac{1}{2}( \interleave(\mathbb L_h,\bm u_h,\widehat{\bm u}_h)\interleave^2+\|p_h\|_{\mathcal T_h}^2).
	\end{align*}
	As in the proof of \Cref{Theorem 2.3}, it remains to control the norm of the constructed test tuple. We have
	\[
	\interleave(\mathbb G_2,\bm v_2,\widehat{\bm v}_2)\interleave
	=\nu^{1/2}\|\nabla\bm u_h\|_{\mathcal T_h}
	\le \interleave(\mathbb L_h,\bm u_h,\widehat{\bm u}_h)\interleave.
	\]
	Furthermore, the stability of the RT projection, the trace inequality, and the continuous inf-sup estimate imply
	\[
	\nu^{1/2}\|\nabla\bm v_3\|_{\mathcal T_h}
	+\nu^{1/2}\|\eta^{1/2}(\bm\Pi_{k}^{\partial}\bm v_3-\widehat{\bm v}_3)\|_{\partial\mathcal T_h}
	\lesssim \|p_h\|_{\mathcal T_h},
	\]
	so that $\interleave(\mathbb G_3,\bm v_3,\widehat{\bm v}_3)\interleave\lesssim \|p_h\|_{\mathcal T_h}$. Consequently,
	\[
	\interleave(\mathbb G_h,\bm v_h,\widehat{\bm v}_h)\interleave+\|q_h\|_{\mathcal T_h}
	\lesssim \interleave(\mathbb L_h,\bm u_h,\widehat{\bm u}_h)\interleave+\|p_h\|_{\mathcal T_h},
	\]
	and division yields \eqref{RTlbb1}.
\end{proof}
\subsection{Error Estimations}\label{section4.3}
Similar to \Cref{section3.3}, this section presents the derivation of the equation in \eqref{RT error equation} and analyzes the a priori error estimates as well as $L^2$ error estimation for velocity. We define the discrete errors based on the following quantities:
\begin{align*}
	\bm\xi_h^{\mathbb L}=\bm{\Pi}_{k}^o\mathbb L-\mathbb L_h,\quad
	\bm\xi_h^{\bm u}=\bm{\Pi}_{k}^{RT}\bm u-\bm u_h,\quad
	\bm\xi_h^{\widehat{\bm u}}=\bm{\Pi}_{k}^{\partial}\bm u-\widehat{\bm u}_h,\quad
	\xi_h^{p}={\Pi}_{k}^{o}p-p_h.
\end{align*}
\begin{lemma}\label{lemma4.4}
	Considering $(\mathbb L,\bm u,p)\in [H^{\epsilon}(\Omega)]^{d\times d}\times[ H^1(\Omega)]^d\times(H^{\epsilon}(\Omega)\cap L_0^2(\Omega))$  {with $0<\epsilon<1/2$} as the solution to \eqref{2.2}, the equation holds for all $(\mathbb G_h,\bm v_h,\widehat{\bm v}_h,q_h)\in \bm{\Sigma}_h\times\bm V_h^{\bm 0}\times \widehat{\bm V}_h^{\bm 0}\times Q_h$:
	\begin{align}\label{RT error equation}
		\mathscr R(\bm{\Pi}_{k}^o\mathbb L,\bm{\Pi}_k^{RT}\bm u,\bm\Pi_{k}^{\partial}{\bm u},\Pi_{k}^op;
		\mathbb G_h,\bm v_h,\widehat{\bm v}_h,q_h)=	(\bm f,\bm v_h)_{\mathcal T_h}	+E(\mathbb L,\bm u;\bm v_h,\widehat{\bm v}_h),
	\end{align}
	where
	\begin{align*}
		E(\mathbb L,\bm u;\bm v_h,\widehat{\bm v}_h)=	\langle(\bm{\Pi}_{k}^o\mathbb L-\mathbb L)\bm n,\widehat{\bm v}_h -\bm v_h\rangle_{\partial\mathcal T_h} 
		+\nu\langle \eta(\bm{\Pi}_k^{RT}\bm u-\bm u),
		\bm{\Pi}_{k}^{\partial}\bm v_h-\widehat{\bm v}_h
		\rangle_{\partial\mathcal T_h}.
	\end{align*}
	
\end{lemma}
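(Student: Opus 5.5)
The proof mirrors that of \Cref{BDM error equation lemma}. We expand $\mathscr R(\bm{\Pi}_{k}^o\mathbb L,\bm{\Pi}_k^{RT}\bm u,\bm\Pi_{k}^{\partial}\bm u,\Pi_{k}^op;\mathbb G_h,\bm v_h,\widehat{\bm v}_h,q_h)$ by its definition and group the terms into three pieces. $R_1$ collects the terms tested by $\mathbb G_h$. $R_2$ collects the terms tested by $(\bm v_h,\widehat{\bm v}_h)$ that involve $\mathbb L$ and $p$. $R_3$ is the stabilization term plus the $q_h$ term. We then treat each piece separately.

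\textbf{The piece $R_1$.} Since $\mathbb G_h|_T\in[\mathcal P_k(T)]^{d\times d}$, we have $(\bm\Pi_k^o\mathbb L,\mathbb G_h)_{\mathcal T_h}=(\mathbb L,\mathbb G_h)_{\mathcal T_h}$. Next, $\nabla\cdot\mathbb G_h\in[\mathcal P_{k-1}(T)]^d$, so by \eqref{4.3} we get $(\bm\Pi_k^{RT}\bm u,\nabla\cdot\mathbb G_h)_{\mathcal T_h}=(\bm u,\nabla\cdot\mathbb G_h)_{\mathcal T_h}$. This is exactly why the RT error functional has no analogue of the term $E_3$ from the BDM case. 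Finally, $\mathbb G_h\bm n|_E\in[\mathcal P_k(E)]^d$, so $\langle\bm\Pi_k^\partial\bm u,\mathbb G_h\bm n\rangle=\langle\bm u,\mathbb G_h\bm n\rangle$. Integrating by parts elementwise gives $R_1=\nu^{-1}(\mathbb L,\mathbb G_h)_{\mathcal T_h}-(\nabla\bm u,\mathbb G_h)_{\mathcal T_h}$, which vanishes by \eqref{2.2a}.

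\textbf{The piece $R_2$.} First we integrate by parts on each element, which turns $-(\nabla\cdot\bm\Pi_k^o\mathbb L,\bm v_h)_{\mathcal T_h}$ into $(\bm\Pi_k^o\mathbb L,\nabla\bm v_h)_{\mathcal T_h}-\langle\bm\Pi_k^o\mathbb L\bm n,\bm v_h\rangle_{\partial\mathcal T_h}$. We then need two projection identities. The first is $(\bm\Pi_k^o\mathbb L,\nabla\bm v_h)_{\mathcal T_h}=(\mathbb L,\nabla\bm v_h)_{\mathcal T_h}$, which holds because $\nabla\bm v_h$ is polynomial of degree $\le k$ for $\bm v_h\in\bm{RT}_k$ (the term $\bm x\,\mathcal P_k$ has gradient in $\mathcal P_k$). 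The second is $(\Pi_k^op,\nabla\cdot\bm v_h)_{\mathcal T_h}=(p,\nabla\cdot\bm v_h)_{\mathcal T_h}$, which holds because $\nabla\cdot\bm v_h\in\mathcal P_k(T)$. Next we apply \Cref{itegration}, in the form \eqref{I2}, to the combination $(\mathbb L-p\bm I,\nabla\bm v_h)_{\mathcal T_h}$. This requires only $\mathbb L,p\in H^\epsilon$ and uses $\bm v_h\in\bm H(\mathrm{div};\Omega)$ with vanishing normal trace on $\partial\Omega$. We obtain
\[
R_2=-(\nabla\cdot(\mathbb L-p\bm I),\bm v_h)_{\mathcal T_h}+\langle(\bm\Pi_k^o\mathbb L-\mathbb L)\bm n,\widehat{\bm v}_h-\bm v_h\rangle_{\partial\mathcal T_h}.
\]
The step that needs care is the single-valuedness $\langle\mathbb L\bm n,\widehat{\bm v}_h\rangle_{\partial\mathcal T_h}=0$, which we need in order to add $-\langle\mathbb L\bm n,\widehat{\bm v}_h\rangle$ for free. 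It follows because $\widehat{\bm v}_h$ is single valued on interior faces and zero on $\Gamma$, while $\mathbb L\bm n$ is continuous in the weak sense, since $\nabla\cdot(\mathbb L-p\bm I)\in L^2$ and the pressure part cancels the same way as in \Cref{itegration}. As in the BDM case, this is the point that has to be justified through the duality pairing rather than through pointwise traces, and I expect it to be the main obstacle.

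\textbf{The piece $R_3$.} By the orthogonality of $\bm\Pi_k^\partial$, the stabilization term becomes $\nu\langle\eta(\bm\Pi_k^{RT}\bm u-\bm u),\bm\Pi_k^\partial\bm v_h-\widehat{\bm v}_h\rangle_{\partial\mathcal T_h}$. By \eqref{4.5} with $q_h\in\mathcal P_k$, together with $\nabla\cdot\bm u=0$, the $q_h$ term vanishes.

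Summing the three pieces and using $-\nabla\cdot(\mathbb L-p\bm I)=\bm f$ yields \eqref{RT error equation} with the stated $E$.
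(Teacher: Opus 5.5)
Your proposal is correct and follows the paper's proof essentially step by step. It uses the same splitting into $R_1$, $R_2$, $R_3$, the property \eqref{4.3} to remove the analogue of $E_3$, \Cref{itegration} for $R_2$, orthogonality of $\bm\Pi_k^{\partial}$ together with \eqref{4.5} for $R_3$, and the same final substitution. Beyond that, you only spell out two points the paper leaves implicit: the degree count behind the projection identities ($\nabla\bm v_h\in[\mathcal P_k]^{d\times d}$ and $\nabla\cdot\bm v_h\in\mathcal P_k$ for $\bm v_h\in\bm{RT}_k$), and the identity $\langle\mathbb L\bm n,\widehat{\bm v}_h\rangle_{\partial\mathcal T_h}=0$, which the paper uses silently and which holds at the same level of rigor as \Cref{itegration}.
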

\begin{proof}	
	First, we note that
	\begin{align}\label{4.23}
		\bm{\Pi}_k^{RT}\bm u|_T\in[\mathcal{P}_k(T)]^d,\quad\forall T\in\mathcal{T}_h.
	\end{align}
	For any $T\in\mathcal{T}_h$ and $q_h\in\mathcal{P}_k(T)$, utilizing the commutativity property in \eqref{4.5}, we observe that
	\begin{align*}
		(\nabla\cdot\bm{\Pi}_k^{RT}\bm u,q_h)_T=(\nabla\cdot\bm u,q_h)_{ T}=0.
	\end{align*}
	This implies $\nabla\cdot\bm{\Pi}_k^{RT}\bm u=0.$ Thus, \eqref{4.23} is a consequence derived from \Cref{RT_lemma}. According to the definition of $\mathscr R$, we have
	\begin{align*}
		&\mathscr R(\bm{\Pi}_{k}^o\mathbb L,\bm{\Pi}_k^{RT}\bm u,\bm\Pi_{k}^{\partial}{\bm u},\Pi_{k}^op;
		\mathbb G_h,\bm v_h,\widehat{\bm v}_h,q_h)\\
		&\qquad= 	\nu^{-1}( \bm{\Pi}_{k}^o\mathbb L,\mathbb G_h)_{\mathcal T_h}+(\bm{\Pi}_k^{RT}\bm u,\nabla\cdot\mathbb G_h)_{\mathcal T_h}
		-\langle \bm\Pi_{k}^{\partial}{\bm u},\mathbb G_h\bm n \rangle_{\partial\mathcal T_h}\\
		&\qquad\quad	-(\nabla\cdot\bm{\Pi}_{k}^o\mathbb L,\bm v_h)_{\mathcal T_h}	+\langle\bm{\Pi}_{k}^o\mathbb L\bm n,\widehat{\bm v}_h \rangle_{\partial\mathcal T_h} -(\Pi_{k}^op,\nabla \cdot\bm v_h)_{\mathcal T_h}\\
		&\qquad\quad	+\nu\langle\eta\bm{\Pi}_{k}^{\partial}(\bm{\Pi}_k^{RT}\bm u-{\bm u}),
		\bm{\Pi}_{k}^{\partial}\bm v_h-\widehat{\bm v}_h
		\rangle_{\partial\mathcal T_h}
		+ (\nabla\cdot\bm{\Pi}_k^{RT}\bm u,q_h)_{\mathcal T_h}.
	\end{align*}
	For convenience, we define
	\begin{align*}
		&R_1:=\nu^{-1}( \bm{\Pi}_{k}^o\mathbb L,\mathbb G_h)_{\mathcal T_h}+(\bm{\Pi}_k^{RT}\bm u,\nabla\cdot\mathbb G_h)_{\mathcal T_h}-\langle \bm\Pi_{k}^{\partial}{\bm u},\mathbb G_h\bm n \rangle_{\partial\mathcal T_h},\\
		&R_2:=-(\nabla\cdot\bm{\Pi}_{k}^o\mathbb L,\bm v_h)_{\mathcal T_h}	+\langle\bm{\Pi}_{k}^o\mathbb L\bm n,\widehat{\bm v}_h \rangle_{\partial\mathcal T_h} -(\Pi_{k}^op,\nabla \cdot\bm v_h)_{\mathcal T_h},\\
		&R_3:=\nu\langle\eta\bm{\Pi}_{k}^{\partial}(\bm{\Pi}_k^{RT}\bm u-{\bm u}),
		\bm{\Pi}_{k}^{\partial}\bm v_h-\widehat{\bm v}_h
		\rangle_{\partial\mathcal T_h}
		+ (\nabla\cdot\bm{\Pi}_k^{RT}\bm u,q_h)_{\mathcal T_h}.
	\end{align*}
	By the orthogonality of $\bm{\Pi}_{k}^{\partial}$ in \eqref{L2} and property of $\bm{\Pi}_{k}^{RT}$  in \eqref{4.3}, it indicates
	\begin{align}
		R_1=&\nu^{-1}( \bm{\Pi}_{k}^o\mathbb {L},\mathbb G_h)_{\mathcal T_h}+(\bm{\Pi}_k^{RT}\bm u,\nabla\cdot\mathbb G_h)_{\mathcal T_h}-\langle \bm\Pi_{k}^{\partial}{\bm u},\mathbb G_h\bm n \rangle_{\partial\mathcal T_h}\nonumber\\
		=&\nu^{-1}( \mathbb L,\mathbb G_h)_{\mathcal T_h}+(\bm u,\nabla\cdot\mathbb G_h)_{\mathcal T_h}-\langle {\bm u},\mathbb G_h\bm n \rangle_{\partial\mathcal T_h}\nonumber\\
		=&\nu^{-1}( \mathbb L,\mathbb G_h)_{\mathcal T_h}-(\nabla\bm u,\mathbb{G}_h)_{\mathcal T_h}\label{R1}.
	\end{align}
	From \Cref{itegration} and  the orthogonality of $L_2$ projection, we obtain
	\begin{align}
		R_2=&(\bm{\Pi}_{k}^o\mathbb L,\nabla\bm v_h)_{\mathcal T_h}-\left\langle \bm{\Pi}_{k}^o\mathbb L\bm n,\bm v_h\right\rangle_{\partial\mathcal T_h} +\langle\bm{\Pi}_{k}^o\mathbb L\bm n,\widehat{\bm v}_h \rangle_{\partial\mathcal T_h}\nonumber\\
		&-(\Pi_{k}^op-p,\nabla \cdot\bm v_h)_{\mathcal T_h}-(p,\nabla \cdot\bm v_h)_{\mathcal T_h}\nonumber\\
		=& -(\nabla\cdot(\mathbb{L}-p\bm I),\bm v_h)_{\mathcal T_h}+\left\langle \bm{\Pi}_{k}^o\mathbb L\bm n,\widehat{\bm v}_h-\bm v_h\right\rangle_{\partial\mathcal T_h}-(\Pi_{k}^op-p,\nabla \cdot\bm v_h)_{\mathcal T_h}\nonumber \\
		&-(\Pi_{k}^op-p,\nabla \cdot\bm v_h)_{\mathcal T_h}-(\nabla\cdot\mathbb{L},\bm v_h)_{\mathcal T_h}+(\nabla p,\bm v_h)_{\mathcal T_h}\nonumber\\
		=&\langle(\bm{\Pi}_{k}^o\mathbb L-\mathbb L)\bm n,\widehat{\bm v}_h-\bm v_h\rangle_{\partial\mathcal T_h} -(\nabla\cdot\mathbb{L}-p\bm I,\bm v_h)_{\mathcal T_h}.\label{R2}
	\end{align}
	By the definition of $\bm{\Pi}_{k}^o$ and the commutativity property of $\boldsymbol{\Pi}_k^{RT}$ in \eqref{4.5}, it indicates
	\begin{align}
		R_3=\nu\langle\eta(\bm{\Pi}_k^{RT}\bm u-{\bm u}),
		\bm{\Pi}_{k}^{\partial}\bm v_h-\widehat{\bm v}_h
		\rangle_{\partial\mathcal T_h}.\label{R3}
	\end{align}
	Therefore, adding \eqref{R1},\eqref{R2} and \eqref{R3} with the fact that $-\nabla\cdot(\mathbb L - p\bm I)=\bm f$  {in $L^2$ product}, we have finished the proof.
\end{proof}
\endgroup


\begingroup
We prove the following error estimates by using standard interpolation approximation.
\begin{lemma}\label{RT_E}
	For $(\mathbb L,\bm u)\in[{H}^{k}(\Omega)]^{d\times d}\times[ H^{k+1}(\Omega)]^{d}$, it holds the following estimate
	\begin{align}
		|E(\mathbb L,\bm u;\bm v_h,\widehat{\bm v}_h)|\lesssim \nu^{1/2} h^{k}|\bm u|_{k+1}\interleave(\mathbb G_h,\bm v_h,\widehat{\bm v}_h)\interleave,\quad \forall (\bm v_h,\widehat{\bm v}_h)\in  {\bm {V}_h^{\bm 0}}\times \widehat{\bm V}_h^{\bm 0}.
	\end{align}
\end{lemma}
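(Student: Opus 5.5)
The plan follows the proof of \Cref{BDM_E}, now with the RT data: $\mathbb L_h$ and the traces have degree $k$, $\bm u_h|_T\in\bm{RT}_k(T)$, and the stabilization uses $\bm\Pi_k^{\partial}$. The error functional in \Cref{lemma4.4} has only two terms (there is no $E_3$, since \eqref{4.3} absorbs the volume term in $R_1$). We write
\begin{align*}
E_1:=\langle(\bm{\Pi}_{k}^o\mathbb L-\mathbb L)\bm n,\widehat{\bm v}_h -\bm v_h\rangle_{\partial\mathcal T_h},\qquad
E_2:=\nu\langle \eta(\bm{\Pi}_k^{RT}\bm u-\bm u),\bm{\Pi}_{k}^{\partial}\bm v_h-\widehat{\bm v}_h\rangle_{\partial\mathcal T_h},
\end{align*}
and bound each by $\nu^{1/2}h^k|\bm u|_{k+1}$ times the norm \eqref{norm2}.

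For $E_1$, split $\widehat{\bm v}_h-\bm v_h=(\widehat{\bm v}_h-\bm\Pi_k^{\partial}\bm v_h)+(\bm\Pi_k^{\partial}\bm v_h-\bm v_h)$ and apply Cauchy--Schwarz with the weights $\mathfrak h^{1/2}$ and $\eta^{1/2}$.
\begin{itemize}
\item Since $\mathbb L=\nu\nabla\bm u$, the local trace inequality together with \eqref{2.4'} gives $\|\mathfrak h^{1/2}(\bm\Pi_k^o\mathbb L-\mathbb L)\|_{\partial\mathcal T_h}\lesssim \nu h^{k}|\bm u|_{k+1}$. This uses that $\mathbb L\in H^k$ and the projection has degree $k\ge k-1$; we take $m=k$ in \eqref{2.4'}.
\item For the first piece, $\|\eta^{1/2}(\widehat{\bm v}_h-\bm\Pi_k^{\partial}\bm v_h)\|_{\partial\mathcal T_h}$ appears directly in the norm \eqref{norm2}.
\item For the second piece, $\bm v_h|_T\in\bm{RT}_k(T)\subset[\mathcal P_{k+1}(T)]^d$, so on each face $\bm\Pi_k^{\partial}\bm v_h-\bm v_h=(\bm\Pi_k^{\partial}-I)(\bm v_h-\bm c)$ for any constant $\bm c$. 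Choosing $\bm c$ as the elementwise mean of $\bm v_h$ and using the discrete trace inequality \eqref{trace_2} plus Poincaré on $T$ gives $\|\eta^{1/2}(\bm\Pi_k^{\partial}\bm v_h-\bm v_h)\|_{\partial T}\lesssim\|\nabla\bm v_h\|_T$.
\end{itemize}
Combining these, $|E_1|\lesssim \nu h^k|\bm u|_{k+1}\big(\|\nabla\bm v_h\|_{\mathcal T_h}+\|\eta^{1/2}(\bm\Pi_k^{\partial}\bm v_h-\widehat{\bm v}_h)\|_{\partial\mathcal T_h}\big)\le \nu^{1/2}h^k|\bm u|_{k+1}\interleave(\mathbb G_h,\bm v_h,\widehat{\bm v}_h)\interleave$.

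For $E_2$, Cauchy--Schwarz gives $|E_2|\le \nu\|\eta^{1/2}(\bm\Pi_k^{RT}\bm u-\bm u)\|_{\partial\mathcal T_h}\|\eta^{1/2}(\bm\Pi_k^{\partial}\bm v_h-\widehat{\bm v}_h)\|_{\partial\mathcal T_h}$. The first factor is $\sum_T h_T^{-1}\|\bm\Pi_k^{RT}\bm u-\bm u\|_{\partial T}^2$, and this is the main obstacle: \eqref{4.4} only controls the $L^2(T)$ error, while the local trace inequality also needs $|\bm\Pi_k^{RT}\bm u-\bm u|_{1,T}$. I would handle it by inserting the $L^2$ projection $\bm\Pi_k\bm u$:
\begin{itemize}
\item For $\bm u-\bm\Pi_k\bm u$, the trace inequality with \eqref{2.4'} gives $h_T^{k+1/2}|\bm u|_{k+1,T}$.
\item For the polynomial $\bm\Pi_k^{RT}\bm u-\bm\Pi_k\bm u$, use \eqref{trace_2} and then \eqref{4.4} with $m=k+1$ plus \eqref{2.4'}, which gives $h_T^{-1/2}\cdot h_T^{k+1}|\bm u|_{k+1,T}$.
\end{itemize}
Hence $\|\eta^{1/2}(\bm\Pi_k^{RT}\bm u-\bm u)\|_{\partial\mathcal T_h}\lesssim h^k|\bm u|_{k+1}$, and $|E_2|\lesssim\nu^{1/2}h^k|\bm u|_{k+1}\interleave(\mathbb G_h,\bm v_h,\widehat{\bm v}_h)\interleave$.

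Summing the bounds for $E_1$ and $E_2$ gives the statement. All constants depend only on shape regularity and $k$, not on $\nu$ or $h$. The only extra care beyond \Cref{BDM_E} is that $\bm v_h$ may have degree $k+1$, which is why the $E_1$ estimate goes through the mean-subtraction and Poincaré argument.
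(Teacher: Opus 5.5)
Your proposal is correct and follows the paper's proof. It uses the same splitting into $E_1$ and $E_2$, Cauchy--Schwarz with the weights $\mathfrak h^{1/2}$ and $\eta^{1/2}$, and trace plus approximation estimates for $\bm\Pi_k^o\mathbb L-\mathbb L$ and $\bm\Pi_k^{RT}\bm u-\bm u$, while making explicit two steps the paper leaves implicit:
\begin{itemize}
\item the bound $\|\eta^{1/2}(\bm\Pi_k^{\partial}\bm v_h-\bm v_h)\|_{\partial T}\lesssim\|\nabla\bm v_h\|_T$ for the degree-$(k+1)$ field $\bm v_h$, which is the $m=1$ case of the face-projection estimate in \Cref{projection properties};
\item the insertion of $\bm\Pi_k\bm u$ to get the trace bound for $\bm\Pi_k^{RT}\bm u-\bm u$.
\end{itemize}
Your $E_2$ estimate also correctly carries the factor $\nu$ and $\bm\Pi_k^{\partial}$, where the paper's display has $\nu^{1/2}$ and $\bm\Pi_{k-1}^{\partial}$ as typos.
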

\begin{proof}
	We define
	\begin{align*}		 
		E_1:=\langle(\bm{\Pi}_{k}^o\mathbb L-\mathbb L)\bm n,\widehat{\bm v}_h -\bm v_h\rangle_{\partial\mathcal T_h},\quad E_2:=\nu\langle \eta(\bm{\Pi}_k^{RT}\bm u-\bm u),
		\bm{\Pi}_{k}^{\partial}\bm v_h-\widehat{\bm v}_h
		\rangle_{\partial\mathcal T_h}.
	\end{align*}
	By triangle inequality and \Cref{projection properties}, it holds
	\begin{align}
		|E_1|&\leq\|\bm{\Pi}_{k}^o\mathbb L-\mathbb L\|_{\partial\mathcal T_h}(\|\bm{\Pi}_{k}^{\partial}\bm v_h-\bm v_h\|_{\partial\mathcal T_h} +\|\bm{\Pi}_{k}^{\partial}\bm v_h-\widehat{\bm v}_h\|_{\partial\mathcal T_h})\nonumber\\
		&\lesssim\nu h^{k}|\bm u|_{k+1}(\|\nabla\bm v_h\|_{\mathcal T_h}+\|\eta^{1/2}(\bm{\Pi}_{k}^{\partial}\bm v_h-\widehat{\bm v}_h)\|_{\partial\mathcal T_h})\nonumber\\
		&\leq\nu^{1/2}h^{k}|\bm u|_{k+1}\interleave(\mathbb G_h,\bm v_h,\widehat{\bm v}_h)\interleave.\label{4.28}
	\end{align}
	By the trace inequality and \eqref{4.4}, we have
	\begin{align}
		|E_2|&\leq\nu^{1/2}\|\eta^{1/2}(\bm{\Pi}_{k}^{RT}\bm u-\bm u)\|_{\partial\mathcal T_h}\|\eta^{1/2}(\bm{\Pi}_{k-1}^{\partial}\bm v_h-\widehat{\bm v}_h)\|_{\partial\mathcal T_h}\nonumber\\
		&\lesssim \nu^{1/2} h^{k}|\bm u|_{k+1} \interleave(\mathbb G_h,\bm v_h,\widehat{\bm v}_h)\interleave.\label{4.29}
	\end{align}
	Then the conclusion have been proved by adding \eqref{4.28} and \eqref{4.29} together.
\end{proof}

\begin{theorem} Let $ (\mathbb L,\bm u,p)\in[{H}^{k}(\Omega)]^{d\times d}\times[ H^{k+1}(\Omega)]^{d}\times (H^{\epsilon}(\Omega)\cap L_0^2(\Omega))$ with $0<\epsilon<1/2$ and $(\mathbb G_h,\bm v_h,\widehat{\bm v}_h,q_h)\in \bm{\Sigma}_h\times\bm V_h^{\bm 0}\times \widehat{\bm V}_h^{\bm 0}\times Q_h$ be the solution to \Cref{2.2} and \Cref{second scheme}, respectively. Then it holds the error estimate
	\begin{align}\label{RT Priori Error}
		\begin{split}
		&\interleave(\bm{\Pi}_{k}^o\mathbb L-\mathbb L_h,\bm{\Pi}_{k}^{RT}\bm u-\bm u_h,\bm{\Pi}_{k}^{\partial}\bm u-\widehat{\bm u}_h)\interleave\\
		&\qquad+\|\Pi_{k-1}^op-p_h\|_{\mathcal{T}_h}\le C\nu^{1/2} h^{k}|\bm u|_{k+1}.
		\end{split}
	\end{align}
\end{theorem}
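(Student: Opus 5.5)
The plan follows the proof of \Cref{th-35} for the BDM scheme, with the RT ingredients already available: the inf-sup condition \Cref{LBB2}, the consistency identity \Cref{lemma4.4}, and the consistency bound \Cref{RT_E}. The pressure component is then converted from $\Pi_k^o p$ to $\Pi_{k-1}^o p$ at the end. I expect that conversion to be the real difficulty, because the only estimate the error equation yields directly is for $\Pi_k^o p-p_h$.

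\textbf{Step 1: error equation.} Subtract the scheme \eqref{second scheme} from \eqref{RT error equation}. This gives, for all test tuples in $\bm{\Sigma}_h\times\bm V_h^{\bm 0}\times\widehat{\bm V}_h^{\bm 0}\times Q_h$,
\begin{align*}
\mathscr R(\bm\xi_h^{\mathbb L},\bm\xi_h^{\bm u},\bm\xi_h^{\widehat{\bm u}},\xi_h^p;\mathbb G_h,\bm v_h,\widehat{\bm v}_h,q_h)=E(\mathbb L,\bm u;\bm v_h,\widehat{\bm v}_h),
\end{align*}
where $\xi_h^p=\Pi_k^o p-p_h$. The error tuple is admissible: $\bm\Pi_k^{RT}\bm u-\bm u_h\in\bm V_h^{\bm 0}$ because the normal traces on $\partial\Omega$ agree by \eqref{4.2}. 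Moreover, the proof of \Cref{lemma4.4} shows that $\nabla\cdot\bm\Pi_k^{RT}\bm u=0$, so no pressure consistency term appears. This is the pressure-robustness mechanism, and it needs only $p\in H^\epsilon$ through \Cref{itegration}.

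\textbf{Step 2: inf-sup and consistency.} Apply \eqref{RTlbb1} to the error tuple and bound the numerator with \Cref{RT_E}. This yields
\begin{align*}
\interleave(\bm\xi_h^{\mathbb L},\bm\xi_h^{\bm u},\bm\xi_h^{\widehat{\bm u}})\interleave+\|\Pi_k^o p-p_h\|_{\mathcal T_h}\lesssim \nu^{1/2}h^k|\bm u|_{k+1},
\end{align*}
which already contains the velocity and gradient part of \eqref{RT Priori Error}.

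\textbf{Step 3: passing to $\Pi_{k-1}^o p$ (main obstacle).} Since $\Pi_{k-1}^o\Pi_k^o=\Pi_{k-1}^o$, I would split
\begin{align*}
\Pi_{k-1}^o p-p_h=\Pi_{k-1}^o(\Pi_k^o p-p_h)-(I-\Pi_{k-1}^o)p_h .
\end{align*}
The first term is bounded by Step 2 because $\Pi_{k-1}^o$ is $L^2$-stable by \eqref{2.7}. The second term is the degree-$k$ part of $p_h$.

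To control it, I would first try to test the scheme with a velocity whose divergence is $(I-\Pi_{k-1}^o)p_h$, constructed through $\bm\Pi_k^{RT}$ and the continuous inf-sup condition as in Step 3 of \Cref{LBB2}. The idea is to express $\|(I-\Pi_{k-1}^o)p_h\|^2_{\mathcal T_h}$ through the consistency terms, which are of order $\nu^{1/2}h^k|\bm u|_{k+1}$, plus terms already bounded in Step 2.

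The difficulty is this: $\nabla\cdot\bm V_h^{\bm 0}$ contains all piecewise $\mathcal P_k$ functions with zero mean. Consequently, the pairing of $(I-\Pi_{k-1}^o)\Pi_k^o p$ with that test divergence does not vanish by orthogonality. Its contribution is $\lesssim h^k|p|_k$ rather than a pure velocity term. I would therefore check carefully whether the right-hand side $(\bm f,\bm v_h)$ absorbs this contribution exactly; otherwise the best available statement is the bound of Step 2 for $\Pi_k^o p-p_h$, and the $\Pi_{k-1}^o$ version would carry the additional approximation term $(I-\Pi_{k-1}^o)\Pi_k^o p$.
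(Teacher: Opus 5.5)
Your Steps~1 and~2 are the paper's proof. The paper applies \eqref{RTlbb1} to the error tuple $(\bm\xi_h^{\mathbb L},\bm\xi_h^{\bm u},\bm\xi_h^{\widehat{\bm u}},\xi_h^p)$, replaces the numerator by $E(\mathbb L,\bm u;\bm v_h,\widehat{\bm v}_h)$ via \eqref{RT error equation}, and bounds it with \Cref{RT_E}. That argument controls $\|\xi_h^p\|_{\mathcal T_h}=\|\Pi_k^o p-p_h\|_{\mathcal T_h}$. The paper then writes $\Pi_{k-1}^o p$ in the conclusion, which is an index slip. The BDM estimate \eqref{BDM Priori Error} has the mirror-image slip: it writes $\Pi_k^o$ although there $\xi_h^p=\Pi_{k-1}^o p-p_h$.

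So do not try to finish Step~3, and do not hope that $(\bm f,\bm v_h)_{\mathcal T_h}$ absorbs the extra term: with $\Pi_{k-1}^o$ the inequality is false. Here is a counterexample.
\begin{itemize}
\item Take $\bm g=\bm 0$, $\bm u=\bm 0$, $\mathbb L=0$, $p=x_1^k-|\Omega|^{-1}\int_\Omega x_1^k\,dx$ and $\bm f=\nabla p$.
\item Then $p\in Q_h$, and $(\nabla p,\bm v_h)_{\mathcal T_h}=-(p,\nabla\cdot\bm v_h)_{\mathcal T_h}$ for every $\bm v_h\in\bm V_h^{\bm 0}$, since $\bm v_h\cdot\bm n=0$ on $\Gamma$.
\item Hence $(0,\bm 0,\bm 0,p)$ satisfies \eqref{second scheme}, and by \Cref{LBB2} it is the discrete solution.
\item The right-hand side of the claimed estimate is $C\nu^{1/2}h^k|\bm 0|_{k+1}=0$.
\item The left-hand side equals $\|(I-\Pi_{k-1}^o)x_1^k\|_{\mathcal T_h}>0$.
\end{itemize}
Your diagnosis was right: $(I-\Pi_{k-1}^o)\Pi_k^o p$ is a pure pressure-approximation term that no velocity quantity can bound.

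One correction to Step~1: the pressure leaves the consistency error because $(\Pi_k^o p-p,\nabla\cdot\bm v_h)_{\mathcal T_h}=0$ whenever $\nabla\cdot\bm v_h$ is piecewise in $\mathcal P_k$. It is not because $\nabla\cdot\bm\Pi_k^{RT}\bm u=0$; that fact only handles the $q_h$-term. This is exactly what fixes the projection index at $k$.

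With $\Pi_k^o p$ in place of $\Pi_{k-1}^o p$, your Steps~1--2 are a complete proof, identical to the paper's.
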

\begin{proof}
We use \Cref{LBB2}, it holds
	\begin{equation}\label{4.26}
		\begin{aligned}
			\interleave(\bm{\Pi}_{k}^o\mathbb L-\mathbb L_h,&\bm{\Pi}_{k}^{RT}\bm u-\bm u_h,\bm{\Pi}_{k}^{\partial}\bm u-\widehat{\bm u}_h)\interleave+\|\Pi_{k-1}^op-p_h\|_{\mathcal{T}_h}\\
			&\lesssim \sup_{ {{\bm 0\neq (\mathbb G_h,\bm v_h,\widehat{\bm v}_h,q_h)\in \bm{\Sigma}_h\times\bm V_h^{\bm 0}\times \widehat{\bm V}_h^{\bm 0}\times Q_h}}}\frac{	\mathscr B(\bm\xi_h^{\mathbb L},\bm\xi_h^{\bm u},\bm\xi_h^{\widehat{\bm u}},\xi_h^{p};
				\mathbb G_h,\bm v_h,\widehat{\bm v}_h,q_h)}{	\interleave(\mathbb G_h,\bm v_h,\widehat{\bm v}_h)\interleave+\|q_h\|_{\mathcal{T}_h}}\\
			&=\sup_{ {{\bm 0\neq (\mathbb G_h,\bm v_h,\widehat{\bm v}_h,q_h)\in \bm{\Sigma}_h\times\bm V_h^{\bm 0}\times \widehat{\bm V}_h^{\bm 0}\times Q_h}}}\frac{E(\mathbb L,\bm u;\bm v_h,\widehat{\bm v}_h)}{	\interleave(\mathbb G_h,\bm v_h,\widehat{\bm v}_h)\interleave+\|q_h\|_{\mathcal{T}_h}}\\
			&\lesssim\nu^{1/2} h^{k}|\bm u|_{k+1},
		\end{aligned}
	\end{equation}
which leads to the intended conclusion.
\end{proof}
Same as \Cref{estimete for p}, by triangle inequality,  we obtain
\begin{corollary}
	Let $(\mathbb L,\bm u,p)\in[{H}^{k}(\Omega)]^{d\times d}\times[ H^{k+1}(\Omega)]^{d}\times  (H^{k}(\Omega)\cap L_0^2(\Omega))$ and $(\mathbb G_h,\bm v_h,\widehat{\bm v}_h,q_h)\in \bm{\Sigma}_h\times\bm V_h^{\bm 0}\times \widehat{\bm V}_h^{\bm 0}\times Q_h$ be the solution to \Cref{2.2} and \Cref{second scheme}, respectively. Then it holds the error estimate
	\begin{align*}
	\nu^{-1/2}\|\mathbb L-\mathbb L_h\|_{\mathcal T_h}
	+\nu^{1/2}\|\nabla(\bm u-\bm u_h)\|_{\mathcal T_h}
	&\leq Ch^k\nu^{1/2}|\bm u|_{k+1},\\
	\|p_h-p\|_{\mathcal T_h}&\leq Ch^k(\nu^{1/2}|\bm u|_{k+1}+ |p|_{k}).
\end{align*}
\end{corollary}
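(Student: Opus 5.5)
The plan is to split each error by the triangle inequality into a projection error and a discrete error. The discrete error is controlled by the preceding theorem, estimate \eqref{RT Priori Error}, and the projection error by the approximation properties in \Cref{projection properties} and \eqref{4.4}.

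For the first estimate, write
\[
\mathbb L-\mathbb L_h=(\mathbb L-\bm\Pi_k^o\mathbb L)+(\bm\Pi_k^o\mathbb L-\mathbb L_h),
\qquad
\bm u-\bm u_h=(\bm u-\bm\Pi_k^{RT}\bm u)+(\bm\Pi_k^{RT}\bm u-\bm u_h).
\]
The discrete parts satisfy
\[
\nu^{-1/2}\|\bm\Pi_k^o\mathbb L-\mathbb L_h\|_{\mathcal T_h}+\nu^{1/2}\|\nabla(\bm\Pi_k^{RT}\bm u-\bm u_h)\|_{\mathcal T_h}\le \interleave(\bm\xi_h^{\mathbb L},\bm\xi_h^{\bm u},\bm\xi_h^{\widehat{\bm u}})\interleave\lesssim \nu^{1/2}h^k|\bm u|_{k+1}
\]
by \eqref{RT Priori Error}. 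For the projection parts we use $\mathbb L=\nu\nabla\bm u$ and \eqref{2.4'} with $m=k$, which gives
\[
\nu^{-1/2}\|\mathbb L-\bm\Pi_k^o\mathbb L\|_{\mathcal T_h}\lesssim \nu^{-1/2}h^k|\mathbb L|_k=\nu^{1/2}h^k|\bm u|_{k+1}.
\]
The term $\nu^{1/2}\|\nabla(\bm u-\bm\Pi_k^{RT}\bm u)\|_{\mathcal T_h}$ is handled exactly as in Step 3 of \Cref{LBB2}, now keeping full order. Split
\[
\nabla(\bm u-\bm\Pi_k^{RT}\bm u)=\nabla(\bm u-\bm\Pi_k\bm u)+\nabla(\bm\Pi_k\bm u-\bm\Pi_k^{RT}\bm u).
\]
Estimate the first piece by \eqref{2.4'}. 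For the second piece apply the inverse inequality \eqref{trace_1} to reduce it to $h_T^{-1}\|\bm\Pi_k\bm u-\bm\Pi_k^{RT}\bm u\|_T$, then bound that by \eqref{4.4} with $m=k+1$. Altogether this gives $\nu^{1/2}h^k|\bm u|_{k+1}$.

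For the pressure, split
\[
p-p_h=(p-\Pi^o p)+(\Pi^o p-p_h),
\]
where $\Pi^o$ is the same $L^2$ projection that appears in \eqref{RT Priori Error}. The discrete piece is bounded directly by \eqref{RT Priori Error}, giving $\lesssim\nu^{1/2}h^k|\bm u|_{k+1}$. The projection piece is bounded by \eqref{2.4'}, giving $\lesssim h^k|p|_k$; this requires the projection degree to be at least $k-1$, which holds here.

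\textbf{Main obstacle.} The delicate point is making sure the pressure projection degree in the discrete estimate matches the one used in the splitting. The theorem is stated with $\Pi_{k-1}^o p$, while the error equation in \Cref{lemma4.4} uses $\Pi_k^o p$.
\begin{itemize}
\item If I take \eqref{RT Priori Error} as written (with $\Pi_{k-1}^o$), the splitting works unchanged: $\|p-\Pi_{k-1}^o p\|\lesssim h^k|p|_k$.
\item If instead one reads the discrete pressure error as $\Pi_k^o p-p_h$, then the bound $\|p-\Pi_k^o p\|\lesssim h^k|p|_k$ also follows from \eqref{2.4'} with $m=k\le k+1$.
\end{itemize}
So in either reading the claimed rate $h^k(\nu^{1/2}|\bm u|_{k+1}+|p|_k)$ follows.
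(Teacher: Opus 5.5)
Your proposal is correct and follows the paper's own argument. The paper obtains this corollary by the triangle inequality, combining the discrete estimate \eqref{RT Priori Error} with the approximation properties of $\bm\Pi_k^o$, $\bm\Pi_k^{RT}$ and the pressure projection, exactly as you do; your remark that the pressure projection in \eqref{RT Priori Error} should really be $\Pi_k^o$ (since $\xi_h^p=\Pi_k^o p-p_h$) is apt, and, as you note, the rate $h^k|p|_k$ follows under either reading.
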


Similarly to \eqref{3.24}, we consider the following dual problem of \Cref{2.2}: find a solution $(\bm\xi, \theta)$ satisfying
\begin{subequations}
	\begin{align}\label{4.34}
		\nu^{-1}\bm\phi -\nabla \bm \xi&=0&\text{in}\quad\Omega,\\
		-\nabla\cdot\bm\phi+\nabla \theta&=\boldsymbol{\Pi}_k^{RT} \boldsymbol{u}-\boldsymbol{u}_{h}&\text{in}\quad\Omega,\\
		\nabla \cdot \bm\xi&=0 &\text{in}\quad\Omega,\\
		\bm\xi&=\mathbf{0}&\text{on}\quad\partial\Omega.
	\end{align}
\end{subequations}
Here $\bm\phi=\nu\nabla\bm \xi$, and $\bm u$ and $\bm u_h$ denote the exact and discrete velocities associated with \Cref{Ori_problem2} and \Cref{second scheme}, respectively. We assume the following regularity hypothesis: there exists $C>0$ such that
\begin{align}\label{regularity2}
	\nu|\boldsymbol{\xi}|_{2}+|\theta|_{1} \leq C\left\|\boldsymbol{\Pi}_k^{RT} \boldsymbol{u}-\boldsymbol{u}_{h}\right\|_{0}.
\end{align}
We notice that \eqref{regularity2} holds when $\Omega\subset\mathbb{R}^2$ is convex\cite{MR0404849} or when $\Omega\subset\mathbb{R}^3$ is a convex polyhedron \cite{MR977489}.
\begin{theorem}\label{theorem 4.7}
	Let $(\mathbb L,\bm u,p)\in[H^{k}(\Omega)]^{d\times d}\times[ H^{k+1}(\Omega)]^{d}\times  (H^{\epsilon}(\Omega)\cap L_0^2(\Omega))$  {with $0<\epsilon<1/2$} be the solution to \Cref{2.2} and $(\mathbb G_h,\bm v_h,\widehat{\bm v}_h,q_h)\in \bm{\Sigma}_h\times\bm V_h^{\bm 0}\times \widehat{\bm V}_h^{\bm 0}\times Q_h$ be the solution to \Cref{2.4}. It holds the error estimate
	\begin{align}
		\|\bm u-\bm u_h\|_0\le C h^{k+1}|\bm u|_{k+1}.
	\end{align}
\end{theorem}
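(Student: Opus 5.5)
We would mirror the duality argument of \Cref{theorem 2.8}, now for the RT scheme \eqref{second scheme}. Let $(\bm\xi,\theta)$ with $\bm\phi=\nu\nabla\bm\xi$ solve the dual problem \eqref{4.34}, and assume the regularity \eqref{regularity2}. Since $\nabla\cdot\bm\xi=0$, the RT commutativity \eqref{4.5} together with \Cref{RT_lemma} gives $\bm\Pi_k^{RT}\bm\xi|_T\in[\mathcal P_k(T)]^d$, exactly as in the proof of \Cref{lemma4.4}. That lemma, applied to the dual problem, then yields
\[
\mathscr R(\bm\Pi_k^o\bm\phi,\bm\Pi_k^{RT}\bm\xi,\bm\Pi_k^\partial\bm\xi,\Pi_k^o\theta;\mathbb G_h,\bm v_h,\widehat{\bm v}_h,q_h)=(\bm\Pi_k^{RT}\bm u-\bm u_h,\bm v_h)+E(\bm\phi,\bm\xi;\bm v_h,\widehat{\bm v}_h).
\]
This identity holds for all discrete test functions. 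Only $\theta\in H^1\subset H^\epsilon$ is needed here.

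Next we test with the discrete errors $(\bm\xi_h^{\mathbb L},\bm\xi_h^{\bm u},\bm\xi_h^{\widehat{\bm u}},-\xi_h^p)$, which is admissible since $\bm\xi_h^{\bm u}\in\bm V_h^{\bm 0}$ and $\bm\xi_h^{\widehat{\bm u}}\in\widehat{\bm V}_h^{\bm 0}$. The resulting $L^2$ term is $\|\bm\Pi_k^{RT}\bm u-\bm u_h\|_0^2$. We then apply the adjoint symmetry $\mathscr R(\mathbb L,-\bm u,\widehat{\bm u},p;\mathbb G,\bm v,\widehat{\bm v},q)=\mathscr R(\mathbb G,-\bm v,\widehat{\bm v},q;\mathbb L,\bm u,\widehat{\bm u},p)$, which one checks term by term from the definition of $\mathscr R$. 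Combined with the primal error equation \eqref{RT error equation} (the exact solution terms cancel with $\bm f$), this reduces $\|\bm\Pi_k^{RT}\bm u-\bm u_h\|_0^2$ to two consistency terms:
\begin{itemize}
\item $E(\mathbb L,\bm u;\cdot)$ evaluated at the dual projections $(\bm\Pi_k^{RT}\bm\xi,\bm\Pi_k^\partial\bm\xi)$;
\item $E(\bm\phi,\bm\xi;\cdot)$ evaluated at the primal discrete errors.
\end{itemize}
A convenient feature of the RT case is that $E$ has no $(\cdot,\nabla\cdot\mathbb G_h)$ term, because \eqref{4.3} eliminated it in \eqref{R1}.

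For the first term we use that $\bm\xi$ is continuous with $\bm\xi=\bm 0$ on $\partial\Omega$. Hence $\langle(\bm\Pi_k^o\mathbb L-\mathbb L)\bm n,\bm\Pi_k^\partial\bm\xi\rangle_{\partial\mathcal T_h}$ can be rewritten with $\bm\xi$ in place of $\bm\Pi_k^\partial\bm\xi$, using that $\bm\Pi_k^o\mathbb L\,\bm n$ is piecewise $\mathcal P_k$ on faces. The term $\langle\mathbb L\bm n,\bm\xi\rangle_{\partial\mathcal T_h}$ then vanishes by single-valuedness of $\mathbb L\bm n$ and $\bm\xi$. This leaves pairings against $\bm\xi-\bm\Pi_k^{RT}\bm\xi$ and $\bm\Pi_k^\partial(\bm\Pi_k^{RT}\bm\xi-\bm\xi)$. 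By trace inequalities, \eqref{4.4} and \Cref{projection properties}, these are bounded by $\nu h^{k-1/2}|\bm u|_{k+1}\cdot h^{3/2}|\bm\xi|_2$, that is, by $\nu h^{k+1}|\bm u|_{k+1}|\bm\xi|_2$.

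For the second term, $\|(\bm\Pi_k^o\bm\phi-\bm\phi)\bm n\|_{\partial\mathcal T_h}\lesssim\nu h^{1/2}|\bm\xi|_2$ and $\|\eta^{1/2}(\bm\Pi_k^{RT}\bm\xi-\bm\xi)\|_{\partial\mathcal T_h}\lesssim h|\bm\xi|_2$. These are paired with $\|\eta^{1/2}(\bm\Pi_k^\partial\bm\xi_h^{\bm u}-\bm\xi_h^{\widehat{\bm u}})\|$ and $\|\nabla\bm\xi_h^{\bm u}\|$, after splitting $\widehat{\bm v}-\bm v=(\widehat{\bm v}-\bm\Pi_k^\partial\bm v)+(\bm\Pi_k^\partial\bm v-\bm v)$ and using $\|\bm v-\bm\Pi_k^\partial\bm v\|_{\partial T}\lesssim h^{1/2}\|\nabla\bm v\|_T$. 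By the energy estimate \eqref{RT Priori Error} these primal quantities are $\lesssim\nu^{1/2}h^k|\bm u|_{k+1}$ after dividing by $\nu^{1/2}$, so the second term is bounded by $\nu h^{k+1}|\bm u|_{k+1}|\bm\xi|_2$. Using $\nu|\bm\xi|_2\lesssim\|\bm\Pi_k^{RT}\bm u-\bm u_h\|_0$ from \eqref{regularity2}, we divide through to get $\|\bm\Pi_k^{RT}\bm u-\bm u_h\|_0\lesssim h^{k+1}|\bm u|_{k+1}$. The triangle inequality with \eqref{4.4} at $m=k+1$ then finishes the proof.

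The main obstacle is bookkeeping the boundary terms so that no $\nu^{-1}$ or lost half power of $h$ appears. In particular, the term $\langle(\bm\Pi_k^o\mathbb L-\mathbb L)\bm n,\bm\Pi_k^\partial\bm\xi\rangle$ has no smallness of its own and must cancel via continuity of $\bm\xi$. If that cancellation fails because $\bm\Pi_k^{RT}\bm\xi$ is only normally continuous, I would instead write $\bm\Pi_k^{RT}\bm\xi-\bm\Pi_k^\partial\bm\xi=(\bm\Pi_k^{RT}\bm\xi-\bm\xi)+(\bm\xi-\bm\Pi_k^\partial\bm\xi)$ and estimate each piece by $h^{3/2}|\bm\xi|_2$.
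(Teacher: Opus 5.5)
Your proposal follows the paper's proof essentially step for step: the dual problem \eqref{4.34} with \eqref{regularity2}, the dual consistency identity from \Cref{lemma4.4}, the reduction to $\|\bm\Pi_k^{RT}\bm u-\bm u_h\|_0^2=E(\mathbb L,\bm u;\bm\Pi_k^{RT}\bm\xi,\bm\Pi_k^{\partial}\bm\xi)-E(\bm\phi,\bm\xi;\bm\xi_h^{\bm u},\bm\xi_h^{\widehat{\bm u}})$, the rewriting via $\bm\xi\in[H_0^1(\Omega)]^d$, and then \eqref{4.4}, \eqref{RT Priori Error} and \eqref{regularity2}. Your $h$- and $\nu$-bookkeeping of both consistency terms is correct.

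The one step that is wrong as written is the adjoint symmetry. With $\widehat{\bm u}$ and $\widehat{\bm v}$ not negated, it fails. For example, take $(0,\bm 0,\widehat{\bm u},0;\mathbb G,\bm 0,\bm 0,0)$: the left side is $-\langle\widehat{\bm u},\mathbb G\bm n\rangle_{\partial\mathcal T_h}$, while the right side is $+\langle\mathbb G\bm n,\widehat{\bm u}\rangle_{\partial\mathcal T_h}$. What does hold, term by term, is
\[
\mathscr R(\mathbb L,-\bm u,-\widehat{\bm u},p;\mathbb G,\bm v,\widehat{\bm v},q)=\mathscr R(\mathbb G,-\bm v,-\widehat{\bm v},q;\mathbb L,\bm u,\widehat{\bm u},p).
\]
Equivalently, $\mathscr R(X;Y)=\mathscr R(Y^\ast;X^\ast)$ with $(\mathbb L,\bm u,\widehat{\bm u},p)^\ast:=(-\mathbb L,\bm u,\widehat{\bm u},-p)$.

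So test the dual identity with $Y=(-\bm\xi_h^{\mathbb L},\bm\xi_h^{\bm u},\bm\xi_h^{\widehat{\bm u}},-\xi_h^p)$. The left side is then $\|\bm\Pi_k^{RT}\bm u-\bm u_h\|_0^2+E(\bm\phi,\bm\xi;\bm\xi_h^{\bm u},\bm\xi_h^{\widehat{\bm u}})$. Moreover, $Y^\ast$ is exactly the primal error, so the primal error equation gives $\mathscr R(Y^\ast;-\bm\Pi_k^o\bm\phi,\bm\Pi_k^{RT}\bm\xi,\bm\Pi_k^{\partial}\bm\xi,-\Pi_k^o\theta)=E(\mathbb L,\bm u;\bm\Pi_k^{RT}\bm\xi,\bm\Pi_k^{\partial}\bm\xi)$, which yields the identity above.

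Your sign on the $\mathbb G$-slot is actually harmless in the RT case, because the dual identity tested with $(\mathbb G_h,\bm 0,\bm 0,0)$ has zero right-hand side. The paper's write-up contains the same slip in the symmetry.

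Finally, the cancellation through $\bm\xi\in[H_0^1(\Omega)]^d$ is not needed. Your fallback splitting $\bm\Pi_k^{\partial}\bm\xi-\bm\Pi_k^{RT}\bm\xi=(\bm\Pi_k^{\partial}\bm\xi-\bm\xi)+(\bm\xi-\bm\Pi_k^{RT}\bm\xi)$ already gives the required bound, since each piece is $O(h^{3/2}|\bm\xi|_2)$ on $\partial\mathcal T_h$.
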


\begin{proof}
	Similar to \Cref{lemma4.4}, for all $(\mathbb G_h,\bm v_h,\widehat{\bm v}_h,q_h)\in \bm{\Sigma}_h\times\bm V_h^{\bm g}\times \widehat{\bm V}_h^{\bm g}\times Q_h,$ it holds
	\begin{align}\label{4.31}
		\mathscr R(\bm{\Pi}_{k}^o\varPhi,&\bm{\Pi}_k^{RT}\bm \xi,\bm\Pi_{k}^{\partial}{\bm \xi},\Pi_{k}^o\theta;
		\mathbb G_h,\bm v_h,\widehat{\bm v}_h,q_h)	 \nonumber\\=&(\boldsymbol{\Pi}_k^{RT} \boldsymbol{u}-\boldsymbol{u}_{h},\bm v_h)	+E(\bm\phi,\bm \xi;\bm v_h,\widehat{\bm v}_h).
	\end{align}
	Taking $\bm v_h=\bm{\Pi}_k^{RT}\bm u-\bm u_h$  and noticing the fact $\mathscr R(\mathbb L,-\bm u,p;\mathbb G,\bm v,q)=\mathscr R(\mathbb G,-\bm v,q;\mathbb L,\bm u,p)$, we have 
	\begin{align*}
		\left\| \boldsymbol{\Pi}_k^{RT} \boldsymbol{u}-\boldsymbol{u}_{h}\right\| _0^2
		=&\mathscr R(\bm{\Pi}_{k}^o\varPhi,\bm{\Pi}_k^{RT}\bm \xi,\bm\Pi_{k}^{\partial}{\bm \xi},\Pi_{k}^o\theta;
		\bm\xi_h^{\mathbb L},\bm\xi_h^{\bm u},\bm\xi_h^{\widehat{\bm u}},-\xi_h^{p})\\
		&-E(\bm\phi,\bm \xi;\bm{\Pi}_k^{RT}\bm u-\bm u_h,\bm{\Pi}_{k}^{\partial}\bm u-\widehat{\bm u}_h)\\
		=&\mathscr R(\bm\xi_h^{\mathbb L},-\bm\xi_h^{\bm u},\bm\xi_h^{\widehat{\bm u}},-\xi_h^{p};\bm{\Pi}_{k}^o\varPhi,-\bm{\Pi}_k^{RT}\bm \xi,\bm\Pi_{k}^{\partial}{\bm \xi},\Pi_{k}^o\theta)\\
		&-E(\bm\phi,\bm \xi;\bm{\Pi}_k^{RT}\bm u-\bm u_h,\bm{\Pi}_{k}^{\partial}\bm u-\widehat{\bm u}_h).
	\end{align*}
	This together with \Cref{lemma4.4}, yields
	\begin{align*}
		\left\| \boldsymbol{\Pi}_k^{RT} \boldsymbol{u}-\boldsymbol{u}_{h}\right\| _0^2=&E(\mathbb L,\bm u;\bm{\Pi}_k^{RT}\bm \xi,\bm\Pi_{k}^{\partial}{\bm \xi})-E(\bm\phi,\bm \xi;\bm{\Pi}_k^{RT}\bm u-\bm u_h,\bm{\Pi}_{k}^{\partial}\bm u-\widehat{\bm u}_h).
	\end{align*}
	Since $\bm\xi\in {[ H_0^1(\Omega)]^d},\bm u\in[{H}^2(\Omega)]^d$, we have 
	\begin{align*}
		\langle(&\bm{\Pi}_{k}^o\mathbb L-\mathbb L)\bm n,\bm\Pi_{k}^{\partial}{\bm \xi}\rangle_{\partial\mathcal T_h}\\&=\nu\langle(\bm{\Pi}_{k}^o\nabla\bm u-\nabla\bm u)\bm n , \bm\Pi_{k}^{\partial}{\bm \xi}\rangle_{\partial\mathcal T_h}=\nu\langle\bm{\Pi}_{k}^o\nabla\bm u\cdot\bm n, \bm\Pi_{k}^{\partial}{\bm \xi}\rangle_{\partial\mathcal T_h}\\&=\nu\langle\bm{\Pi}_{k}^o\nabla\bm u\cdot\bm n, {\bm \xi}\rangle_{\partial\mathcal T_h}=\nu\langle\bm{\Pi}_{k}^o\nabla\bm u, {\bm \xi}\rangle_{\partial\mathcal T_h}=\nu\langle(\bm{\Pi}_{k}^o\nabla\bm u-\nabla\bm u)\bm n , {\bm \xi}\rangle_{\partial\mathcal T_h} {.}
	\end{align*}
	By the approximation properties of projections, we have
	\begin{align}\label{4.32}
		E(\mathbb L,\bm u;\bm{\Pi}_k^{RT}\bm \xi,\bm\Pi_{k}^{\partial}{\bm \xi})
		&=	\langle(\bm{\Pi}_{k}^o\mathbb L-\mathbb L)\bm n,\bm\Pi_{k}^{\partial}{\bm \xi} -\bm{\Pi}_k^{RT}\bm \xi\rangle_{\partial\mathcal T_h}\nonumber \\
		&\qquad+\nu\langle \eta(\bm{\Pi}_k^{RT}\bm u-\bm u),
		\bm{\Pi}_{k}^{\partial}(\bm{\Pi}_k^{RT}\bm \xi-{\bm \xi})
		\rangle_{\partial\mathcal T_h}\nonumber\\
		&\leq\nu\left\|\bm{\Pi}_{k}^o\nabla\bm u-\nabla\bm u\right\|_{0,\partial\mathcal T_h}\left\|\bm{\Pi}_k^{RT}\bm \xi-\bm \xi \right\|_{0,\partial\mathcal T_h}\nonumber\\
		&\qquad+\nu \left\|\eta(\bm{\Pi}_k^{RT}\bm u-\bm u) \right\|_{0,\partial\mathcal T_h}\left\|\bm{\Pi}_k^{RT}\bm \xi-\bm \xi \right\|_{0,\partial\mathcal T_h}\nonumber\\
		&\lesssim \nu h^{k+1}|\bm u|_{k+1}|\bm \xi|_2.
	\end{align}
	Similarly,  from \eqref{4.26} we have
	\begin{align*}
		&E(\bm\phi,\bm \xi;\bm{\Pi}_k^{RT}\bm u-\bm u_h,\bm{\Pi}_{k}^{\partial}\bm u-\widehat{\bm u}_h)\\
		&\qquad=\langle(\bm{\Pi}_{k}^o\bm\phi-\bm\phi)\bm n,\bm{\Pi}_{k-1}^{\partial}\bm u-\widehat{\bm u}_h-(\bm{\Pi}_k^{RT}\bm u-\bm u_h)\rangle_{\partial\mathcal T_h} \\
		&\qquad\quad+\nu\langle \eta(\bm{\Pi}_k^{RT}\bm \xi-\bm \xi),
		\bm{\Pi}_{k-1}^{\partial}(\bm{\Pi}_k^{RT}\bm u-{\bm u_h})-(\bm{\Pi}_{k}^{\partial}\bm u-\widehat{\bm u}_h)
		\rangle_{\partial\mathcal T_h}\\
		&\qquad\leq\nu\|\bm{\Pi}_{k}^o\nabla\bm \xi-\nabla\bm\xi \|_{\partial\mathcal T_h}( \|\bm{\Pi}_{k}^{\partial}\bm u-\widehat{\bm u}_h \|_{\partial\mathcal T_h}+\left\|\bm{\Pi}_k^{RT}\bm u-\bm u_h \right\|_{\partial\mathcal T_h} )\\
		&\qquad\quad+ \nu\|\eta(\bm{\Pi}_k^{RT}\bm \xi-\bm \xi) \|_{\partial\mathcal T_h}( \|\bm{\Pi}_{k}^{\partial}\bm u-\widehat{\bm u}_h \|_{\partial\mathcal T_h}+\|\bm{\Pi}_k^{RT}\bm u-\bm u_h \|_{\partial\mathcal T_h} )\\
		&\qquad\lesssim\nu  |\bm\xi|_2 h^{k+1}|\bm u|_{k+1}.
	\end{align*}
	Then, together with \eqref{4.32} and regularity hypothesis (\ref{regularity2}), it holds
	\begin{align}
		\left\|\bm{\Pi}_k^{RT}\bm u-\bm u_h \right\|_0 \lesssim h^{k+1}|\bm u|_{k+1}.
	\end{align}
	By triangle inequality and the approximation property of the $\bm\Pi_k^{RT}$, we can get the $ L^2$ error estimation for velocity.
\end{proof}
\section{\texorpdfstring{$H(\mathrm{div})$}{H(div)}-conforming HDG methods with continuous traces}\label{sec5}
In this section we discuss the corresponding continuous-trace variants. The arguments are parallel to those of \Cref{sec3,sec4}; therefore we emphasize the definitions and the resulting estimates.
\subsection{BDM elements HDG}
We now consider the following finite element spaces. For $ k\ge 1$, we define
\begin{align*}
	\bm{\Sigma}_h&=\{\mathbb L_h\in \bm L^2(\Omega):\mathbb L_h|_T\in [\mathcal P_{k-1}(T)]^{d\times d},\forall T\in\mathcal T_h\},\\
	\bm {V^g}_h&=\{\bm v_h\in  {\bm H({\rm div};\Omega)}: \bm v_h|_T\in [\mathcal{P}_k(T)]^d,\forall T\in\mathcal T_h,\bm v_h\cdot \bm n |_{\partial \Omega}=\Pi_k^{\partial} (\bm g\cdot\bm n)\},\\
	\widehat{\bm V}_h^{\bm g}&=\{\widehat{\bm v}_h\in C^0(\mathcal E_h):\widehat{\bm v}_h|_E\in [\mathcal P_{k}(E)]^d,\forall E\in\mathcal E_h,\ \widehat{\bm v}_h|_{\Gamma}=\mathcal I_k^\Gamma \bm g\},\\
	Q_h&=\{q_h\in L_0^2(\Omega):q_h|_T\in \mathcal{P}_{k-1}(T),\forall T\in\mathcal T_h\},
\end{align*}
where $\mathcal I_k^\Gamma$ denotes the continuous Lagrange interpolation operator on the boundary mesh $\mathcal E_h^\partial$.

Find $(\mathbb L_h,\bm u_h,\widehat{\bm u}_h,q_h)\in \bm{\Sigma}_h\times\bm {V^g}_h\times \widehat{\bm V}_h^{\bm g}\times Q_h$ such that 
\begin{subequations}
	\begin{align}
		\nu^{-1}(\mathbb L_h,\mathbb G_h)_{\mathcal T_h}+(\bm u_h,\nabla\cdot\mathbb G_h)_{\mathcal T_h}
		-\langle \widehat{\bm u}_h ,\mathbb G_h\bm n \rangle_{\partial\mathcal T_h}&= 0,\\
		-(\nabla\cdot\mathbb L_h,\bm v_h)_{\mathcal T_h}
		+\langle\mathbb L_h\bm n,\widehat{\bm v}_h \rangle_{\partial\mathcal T_h} -(p_h,\nabla \cdot\bm v_h)_{\mathcal T_h} \nonumber
		&\\
		+\nu\langle \eta(\bm u_h-\widehat{\bm u}_h ),
		\bm v_h-\widehat{\bm v}_h
		\rangle_{\partial\mathcal T_h}
		&=(\bm f,\bm v_h)_{\mathcal T_h},\\
		(\nabla\cdot\bm u_h,q_h)_{\mathcal T_h}&=0 {,}
	\end{align}
	{
		holds for all $(\mathbb G_h,\bm v_h,\widehat{\bm v}_h,q_h)\in \bm{\Sigma}_h\times\bm V_h^{\bm 0}\times \widehat{\bm V}_h^{\bm 0}\times Q_h$.
	}
\end{subequations}

\subsection{RT elements HDG}
We now consider the following finite element spaces. For $ k\ge 1$, we define
\begin{align*}
	\bm{\Sigma}_h&=\{\mathbb L_h\in \bm L^2(\Omega):\mathbb L_h|_T\in [\mathcal P_{k}(T)]^{d\times d},\forall T\in\mathcal T_h\},\\
	\bm {V^g}_h&=\{\bm v_h\in  {\bm H({\rm div};\Omega)}: \bm v_h|_T\in {\bm{ RT}}_k(T),\forall T\in\mathcal T_h,\bm v_h\cdot \bm n |_{\partial \Omega}=\Pi_k^{\partial} (\bm g\cdot\bm n)\},\\
	\widehat{\bm V}_h^{\bm g}&=\{\widehat{\bm v}_h\in C^0(\mathcal E_h):\widehat{\bm v}_h|_E\in [\mathcal P_{k}(E)]^d,\forall E\in\mathcal E_h,\ \widehat{\bm v}_h|_{\Gamma}=\mathcal I_k^\Gamma \bm g\},\\
	Q_h&=\{q_h\in L_0^2(\Omega):q_h|_T\in \mathcal{P}_{k}(T),\forall T\in\mathcal T_h\}.
\end{align*}
Our methods reads:
Find $(\mathbb L_h,\bm u_h,\widehat{\bm u}_h,q_h)\in \bm{\Sigma}_h\times\bm {V^g}_h\times \widehat{\bm V}_h^{\bm g}\times Q_h$ such that 
\begin{subequations}
	\begin{align}
		\nu^{-1}(\mathbb L_h,\mathbb G_h)_{\mathcal T_h}+(\bm u_h,\nabla\cdot\mathbb G_h)_{\mathcal T_h}
		-\langle \widehat{\bm u}_h ,\mathbb G_h\bm n \rangle_{\partial\mathcal T_h}&= 0,\\
		-(\nabla\cdot\mathbb L_h,\bm v_h)_{\mathcal T_h}
		+\langle\mathbb L_h\bm n,\widehat{\bm v}_h \rangle_{\partial\mathcal T_h} -(p_h,\nabla \cdot\bm v_h)_{\mathcal T_h} \nonumber
		&\\
		+\nu\langle \eta(\bm\Pi_k^{\partial}\bm u_h-\widehat{\bm u}_h ),
		\bm\Pi_k^{\partial}\bm v_h-\widehat{\bm v}_h
		\rangle_{\partial\mathcal T_h}
		&=(\bm f,\bm v_h)_{\mathcal T_h},\\
		(\nabla\cdot\bm u_h,q_h)_{\mathcal T_h}&=0
	\end{align}
	{
		holds for all $(\mathbb G_h,\bm v_h,\widehat{\bm v}_h,q_h)\in \bm{\Sigma}_h\times\bm V_h^{\bm 0}\times \widehat{\bm V}_h^{\bm 0}\times Q_h$.
	}
\end{subequations}

\subsection{Error Estimation}
The continuous-trace variants satisfy estimates analogous to those proved in \Cref{sec3,sec4}. The only substantive change is the replacement of the discontinuous skeleton projector by the continuous boundary interpolation operator $\mathcal I_k^\Gamma$. We record a representative result.

\begin{theorem}
Assume that $(\mathbb L,\bm u,p)\in[H^{k}(\Omega)]^{d\times d}\times[H^{k+1}(\Omega)]^{d}\times(H^{k}(\Omega)\cap L_0^2(\Omega))$, and let $(\mathbb L_h,\bm u_h,\widehat{\bm u}_h,p_h)$ denote the solution of either continuous-trace scheme defined in this section. Then the same convergence orders as in \Cref{sec3,sec4} hold for the corresponding BDM and RT variants. In particular, for the BDM continuous-trace method,
\begin{align}
	\nu^{-1/2}\|\mathbb L-\mathbb L_h\|_{\mathcal T_h}
	+\nu^{1/2}\|\nabla(\bm u-\bm u_h)\|_{\mathcal T_h}
	&\leq Ch^k\nu^{1/2}|\bm u|_{k+1},\\
	\|p-p_h\|_{\mathcal T_h}&\leq Ch^k(\nu^{1/2}|\bm u|_{k+1}+ |p|_{k}),\\
	\|\bm u-\bm u_h\|_{\mathcal T_h}&\le C h^{k+1}|\bm u|_{k+1}.
\end{align}
\end{theorem}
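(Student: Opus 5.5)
I would follow the same three-stage template as in \Cref{sec3,sec4}: discrete inf-sup stability, a consistency (error) equation with a remainder functional, and a duality argument. The only genuinely new ingredient is the continuous trace space. I describe the plan for the BDM continuous-trace scheme; the RT variant is handled identically, with $\bm\Pi_k^{RT}$, $\bm\Pi_k^{\partial}$ and the polynomial degrees of \Cref{sec4}.

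\textbf{Stability.} The natural norm is now
\[
\nu^{-1}\|\mathbb G_h\|^2_{\mathcal T_h}+\nu\|\nabla\bm v_h\|^2_{\mathcal T_h}+\nu\|\eta^{1/2}(\bm v_h-\widehat{\bm v}_h)\|^2_{\partial\mathcal T_h}.
\]
The argument of \Cref{check norm1} carries over verbatim, so this is indeed a norm.

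I would repeat Steps 1--4 of \Cref{Theorem 2.3}. Steps 1 and 2 need no projection: the term $\nu\langle \bm u_h-\widehat{\bm u}_h,\nabla\bm u_h\bm n\rangle$ is bounded directly by \eqref{trace_2}.

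In Step 3 the skeleton test function $\bm\Pi_{k-1}^{\partial}\bm v$ must be replaced by a continuous, boundary-vanishing trace. I would take $\widehat{\bm v}_3=\mathcal I_h\bm v|_{\mathcal E_h}$, where $\mathcal I_h$ is a Scott--Zhang quasi-interpolant into continuous $\mathcal P_k$ that preserves homogeneous boundary values. It satisfies
\[
\|\mathfrak h^{-1/2}(\bm v-\mathcal I_h\bm v)\|_{\partial\mathcal T_h}\lesssim|\bm v|_1 .
\]
With this choice, the bounds \eqref{2.28} and \eqref{2.29} follow as before, and the commutativity \eqref{commutativity} again gives $(p_h,\nabla\cdot\bm\Pi_k^{\rm BDM}\bm v)=\|p_h\|^2$. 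This yields the inf-sup constant $\beta$ independent of $h$ and $\nu$.

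\textbf{Consistency.} I would redo \Cref{BDM error equation lemma} with the comparison tuple
\[
(\bm\Pi_{k-1}^o\mathbb L,\ \bm\Pi_k^{\rm BDM}\bm u,\ \widehat{\bm w},\ \Pi_{k-1}^op),
\]
where $\widehat{\bm w}$ is a continuous trace interpolant of $\bm u$ matching $\mathcal I_k^\Gamma\bm g$ on $\Gamma$; for instance, the trace of a Scott--Zhang or Lagrange interpolant of $\bm u$. The pressure terms are handled exactly as in \Cref{itegration}, so only $p\in H^\epsilon$ is needed there.

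The main obstacle appears here. Since $\widehat{\bm w}$ is no longer an $L^2$ projection, the term $\langle\widehat{\bm w}-\bm u,\mathbb G_h\bm n\rangle$ in $R_1$ does not vanish. It must be kept in the remainder and bounded by
\[
\|\mathfrak h^{-1/2}(\bm u-\widehat{\bm w})\|_{\partial\mathcal T_h}\,\|\mathfrak h^{1/2}\mathbb G_h\|_{\partial\mathcal T_h}\lesssim h^k|\bm u|_{k+1}\|\mathbb G_h\|_{\mathcal T_h}\lesssim \nu^{1/2}h^k|\bm u|_{k+1}\interleave\cdot\interleave .
\]
Similarly, the stabilization remainder becomes $\nu\langle\eta(\bm\Pi_k^{\rm BDM}\bm u-\widehat{\bm w}),\bm v_h-\widehat{\bm v}_h\rangle$, which is bounded by triangle and trace inequalities at the same order. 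A nonzero boundary mismatch $\bm g-\mathcal I_k^\Gamma\bm g$ is absorbed in the same way, since it is $O(h^{k+1/2})$ in $L^2(\Gamma)$.

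Combining the inf-sup condition with these remainder bounds gives the discrete error bound $\lesssim\nu^{1/2}h^k|\bm u|_{k+1}$. Adding projection errors by the triangle inequality gives the first two estimates, with the $|p|_k$ term coming from $\|p-\Pi_{k-1}^op\|$.

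\textbf{Duality.} For the $L^2$ estimate I would repeat \Cref{theorem 2.8} with the dual problem and regularity \eqref{regularity}. The extra remainder terms, which involve $\bm\xi-\widehat{\bm w}_\xi$ with $\bm\xi\in H^2$, each gain an additional factor $h$. Here $\nu|\bm\xi|_2$ is controlled by $\|\bm\Pi_k^{\rm BDM}\bm u-\bm u_h\|_0$, and the energy bound provides a factor $\nu^{1/2}\cdot\nu^{-1/2}$ that cancels the viscosity. The result is $O(h^{k+1})$, with a constant independent of $\nu$ and of the pressure.
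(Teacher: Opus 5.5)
Your stability and energy-norm arguments are sound. They supply the ``routine modifications'' that the paper's one-line proof omits. You also correctly spot that, with a continuous comparison trace $\widehat{\bm w}$, the term $\langle \bm u-\widehat{\bm w},\mathbb G_h\bm n\rangle_{\partial\mathcal T_h}$ survives in the remainder, and your $O(\nu^{1/2}h^k)$ bound for it is right.

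The gap is in the duality step. There $\|\bm\Pi_k^{\rm BDM}\bm u-\bm u_h\|_0^2$ is expressed through $E(\bm\phi,\bm\xi;\cdot)$ evaluated at the discrete errors \emph{and} through $E(\mathbb L,\bm u;\cdot)$ evaluated at the dual comparison tuple, whose flux slot is $\bm\Pi_{k-1}^o\bm\phi$. With continuous traces, the latter now contains
\[
\langle \bm u-\widehat{\bm w},\ \bm\Pi_{k-1}^o\bm\phi\,\bm n\rangle_{\partial\mathcal T_h}.
\]
This term involves no interpolation error of $\bm\xi$, so your claim that every extra term carries a factor $\bm\xi-\widehat{\bm w}_\xi$ fails for it. Bounding it as in your energy analysis, i.e. by $\|\mathfrak h^{-1/2}(\bm u-\widehat{\bm w})\|_{\partial\mathcal T_h}\|\mathfrak h^{1/2}\bm\Pi_{k-1}^o\bm\phi\|_{\partial\mathcal T_h}$, gives only $h^k|\bm u|_{k+1}\|\bm\phi\|_0\lesssim h^k|\bm u|_{k+1}\|\bm\Pi_k^{\rm BDM}\bm u-\bm u_h\|_0$. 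That is no gain over the energy estimate. In \Cref{sec3} this term is simply absent, because $\widehat{\bm w}=\bm\Pi_{k-1}^{\partial}\bm u$ and $\bm\Pi_{k-1}^o\bm\phi\,\bm n|_E\in[\mathcal P_{k-1}(E)]^d$. So ``repeat \Cref{theorem 2.8}'' does not cover it.

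The missing idea is a cancellation. Write $\bm\Pi_{k-1}^o\bm\phi=\bm\phi-(\bm\phi-\bm\Pi_{k-1}^o\bm\phi)$. The second piece is harmless:
\[
\|\bm u-\widehat{\bm w}\|_{\partial\mathcal T_h}\,\|\bm\phi-\bm\Pi_{k-1}^o\bm\phi\|_{\partial\mathcal T_h}\lesssim h^{k+1/2}|\bm u|_{k+1}\,h^{1/2}|\bm\phi|_1=\nu h^{k+1}|\bm u|_{k+1}|\bm\xi|_2 .
\]
For the first piece, note two facts:
\begin{itemize}
\item $\bm u-\widehat{\bm w}$ is single-valued on interior faces, since both $\bm u\in[H^1(\Omega)]^d$ and the continuous trace are.
\item $\bm\phi=\nu\nabla\bm\xi\in[H^1(\Omega)]^{d\times d}$ has normal traces of opposite sign from the two neighbouring elements.
\end{itemize}
Hence all interior contributions cancel, and $\langle \bm u-\widehat{\bm w},\bm\phi\bm n\rangle_{\partial\mathcal T_h}=\langle \bm g-\mathcal I_k^\Gamma\bm g,\bm\phi\bm n\rangle_{\Gamma}$. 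This vanishes for homogeneous boundary data. For $\bm g\neq\bm 0$ it leaves a boundary term that the obvious bound controls only to $O(h^{k+1/2})$ under the stated hypotheses, so your remark that the boundary mismatch is ``absorbed in the same way'' holds for the energy estimate but not here.

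With this cancellation, \eqref{regularity} yields the $O(h^{k+1})$ bound. The same term, with $\bm\Pi_k^o\bm\phi$ in place of $\bm\Pi_{k-1}^o\bm\phi$, arises in the RT continuous-trace variant and needs the same treatment.
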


\begin{proof}
The proof follows the arguments of \Cref{sec3,sec4} after replacing the discontinuous trace space by its continuous counterpart and using the boundary interpolation operator $\mathcal I_k^\Gamma$ on $\Gamma$. We omit the routine modifications.
\end{proof}

\section{Spectral property of the Schur complement}\label{sec6}
In this section we study the Schur complement associated with the BDM discontinuous-trace formulation from \Cref{first Scheme}. We state the result in terms of spectral equivalence with respect to the pressure mass matrix, which is the formulation relevant for preconditioning.

For each element $K\in\mathcal T_h$, given $\bm w\in [L^2(K)]^d$ and $\bm\lambda\in [L^2(\partial K)]^d$, we define the local flux $\mathbb L_h^{\bm w,\bm\lambda}\in[\mathcal P_k(K)]^{d\times d}$ by
\begin{align*}
	\nu^{-1}(\mathbb L_h^{\bm w,\bm\lambda},\mathbb G_h)_{K}
	=-(\bm w,\nabla\cdot\mathbb G_h)_{K}
	+\langle \bm\lambda ,\mathbb G_h\bm n \rangle_{\partial K},
	\qquad \forall \mathbb G_h\in[\mathcal P_k(K)]^{d\times d}.
\end{align*}
After static condensation, the BDM scheme can be written as: find $(\bm u_h,\widehat{\bm u}_h,p_h)\in \bm V_h^{\bm g}\times \widehat{\bm V}_h^{\bm g}\times Q_h$ such that
\begin{subequations}\label{HDG2}
\begin{align}
	\nu(\mathbb L_h^{\bm u_h,\widehat{\bm u}_h},\mathbb L_h^{\bm v_h,\widehat{\bm v}_h})_{\mathcal T_h}
	+\nu\langle \eta(\bm{\Pi}_{k-1}^{\partial}\bm u_h-\widehat{\bm u}_h ),
	\bm{\Pi}_{k-1}^{\partial}\bm v_h-\widehat{\bm v}_h
	\rangle_{\partial\mathcal T_h}
	-(p_h,\nabla \cdot\bm v_h)_{\mathcal T_h}
	&=(\bm f,\bm v_h)_{\mathcal T_h},\\
	(\nabla\cdot\bm u_h,q_h)_{\mathcal T_h}&=0
\end{align}
for all $(\bm v_h,\widehat{\bm v}_h,q_h)\in \bm V_h^{\bm 0}\times \widehat{\bm V}_h^{\bm 0}\times Q_h$.
\end{subequations}
The corresponding linear system has the block form
\begin{align}\label{matrix}
	\begin{bmatrix}
		{\bm A} &B^T\\
		B &O
	\end{bmatrix}
	\begin{bmatrix}
		{\bf u}\\
		{\bf p}
	\end{bmatrix}
	=
	\begin{bmatrix}
		{\bf f}\\
		{\bf 0}
	\end{bmatrix}.
\end{align}
Here ${\bf u}$ collects the coefficients of $(\bm u_h,\widehat{\bm u}_h)$ and ${\bf p}$ collects the coefficients of $p_h$. Moreover,
\begin{align*}
	\langle{\bm A}{\bf v},{\bf v}\rangle
	=\nu\|\mathbb L_h^{\bm v_h,\widehat{\bm v}_h}\|_{\mathcal T_h}^2
	+\nu\|\eta^{1/2}(\bm{\Pi}_{k-1}^{\partial}\bm v_h-\widehat{\bm v}_h )\|^2_{\partial\mathcal T_h},
\end{align*}
and the divergence coupling is defined by
\begin{align*}
	\langle B{\bf v},{\bf p}\rangle
	=\langle B^T{\bf p},{\bf v}\rangle
	=-(p_h,\nabla\cdot\bm v_h)_{\mathcal T_h}.
\end{align*}

Applying the same argument as in Step three of \Cref{Theorem 2.3}, we obtain
\begin{align}
	\|p_h\|_{\mathcal T_h}\lesssim \sup_{\bm 0\neq \bm v_h\in \bm V_h^{\bm 0}}
	\frac{(p_h,\nabla\cdot\bm v_h)_{\mathcal T_h}}
	{(\|\mathbb L_h^{\bm v_h,\widehat{\bm v}_h}\|_{\mathcal T_h}^2+\|\eta^{1/2}(\bm{\Pi}_{k-1}^{\partial}\bm v_h-\widehat{\bm v}_h )\|^2_{\partial\mathcal T_h})^{1/2}}
	\lesssim \|p_h\|_{\mathcal T_h}. \label{schur-inf-sup}
\end{align}
Therefore,
\begin{align*}
	\|p_h\|_{\mathcal T_h}
	&\lesssim \sup_{{\bf v}\neq {\bf 0}}\frac{\langle B^T{\bf p}, {\bf v}\rangle}{\langle  {\bm A}{\bf v},{\bf v}\rangle^{1/2}}
	= \sup_{{\bf w}\neq {\bf 0}}\frac{\langle  {\bm A}^{-1/2}B^T{\bf p}, {\bf w}\rangle}{\langle{\bf w},{\bf w}\rangle^{1/2}} \\
	&= \|{\bm A}^{-1/2}B^T{\bf p}\|
	= \langle B{\bm A}^{-1}B^T{\bf p},{\bf p} \rangle^{1/2}
	\lesssim \|p_h\|_{\mathcal T_h}.
\end{align*}
This proves the following result.

\begin{theorem}
Let $S:=B{\bm A}^{-1}B^T$ be the pressure Schur complement. There exist constants $c,C>0$, independent of $h$ and $\nu$, such that
\begin{align*}
	c\|p_h\|_{\mathcal T_h}^2
	\le \langle S{\bf p},{\bf p}\rangle
	\le C\|p_h\|_{\mathcal T_h}^2
\end{align*}
for all $p_h\in Q_h$. Equivalently, the Schur complement is spectrally equivalent to the pressure mass operator, and the mass-preconditioned Schur complement has condition number bounded independently of $h$ and $\nu$.
\end{theorem}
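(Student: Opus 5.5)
The plan is to use the variational characterization of the Schur complement,
\[
\langle S{\bf p},{\bf p}\rangle=\langle B{\bm A}^{-1}B^T{\bf p},{\bf p}\rangle
=\sup_{{\bf v}\neq{\bf 0}}\frac{\langle B^T{\bf p},{\bf v}\rangle^2}{\langle{\bm A}{\bf v},{\bf v}\rangle}
=\sup_{(\bm v_h,\widehat{\bm v}_h)\neq\bm 0}\frac{(p_h,\nabla\cdot\bm v_h)_{\mathcal T_h}^2}{\langle{\bm A}{\bf v},{\bf v}\rangle}.
\]
This holds because ${\bm A}$ is symmetric positive definite on $\bm V_h^{\bm 0}\times\widehat{\bm V}_h^{\bm 0}$. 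Positive definiteness follows from the argument of \Cref{check norm1}, once we know that $\langle{\bm A}{\bf v},{\bf v}\rangle$ controls $\nabla\bm v_h$. With this formula, both bounds reduce to the two-sided estimate \eqref{schur-inf-sup}.

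\emph{A caveat on $\nu$.} As written, ${\bm A}$ carries a factor $\nu$, so the honest statement is $c\,\nu^{-1}\|p_h\|^2\le\langle S{\bf p},{\bf p}\rangle\le C\nu^{-1}\|p_h\|^2$. I will therefore prove the estimate for ${\bm A}$ with the factor $\nu$ removed, which is the normalization used in \eqref{schur-inf-sup}. The $\nu$-scaling then only rescales the spectrum by $\nu^{-1}$, and the condition number of the mass-preconditioned Schur complement is $\nu$-independent either way.

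\textbf{Upper bound.} First I show a discrete Poincaré/Korn-type estimate
\[
\|\nabla\bm v_h\|_{\mathcal T_h}\lesssim \|\mathbb L_h^{\bm v_h,\widehat{\bm v}_h}\|_{\mathcal T_h}+\|\eta^{1/2}(\bm\Pi_{k-1}^{\partial}\bm v_h-\widehat{\bm v}_h)\|_{\partial\mathcal T_h}.
\]
To do this, take $\mathbb G_h=\nabla\bm v_h$ (admissible, since $\nabla\bm v_h|_K\in[\mathcal P_{k-1}(K)]^{d\times d}$) in the definition of the local flux and integrate by parts, exactly as in Step 2 of \Cref{Theorem 2.3}. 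This gives $\|\nabla\bm v_h\|^2=\nu^{-1}(\mathbb L_h,\nabla\bm v_h)+\langle \bm v_h-\widehat{\bm v}_h,\nabla\bm v_h\bm n\rangle$. In the boundary term we replace $\bm v_h$ by $\bm\Pi^\partial_{k-1}\bm v_h$, which is allowed because $\nabla\bm v_h\bm n|_E\in\mathcal P_{k-1}(E)$. The trace inequality \eqref{trace_2} and Young's inequality then finish the estimate. Combined with $|(p_h,\nabla\cdot\bm v_h)|\le\sqrt d\,\|p_h\|\,\|\nabla\bm v_h\|$, this yields the right inequality of \eqref{schur-inf-sup}, and hence $\langle S{\bf p},{\bf p}\rangle\le C\|p_h\|^2$.

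\textbf{Lower bound.} This is the Fortin step, and I expect it to be the main obstacle. As in Step 3 of \Cref{Theorem 2.3}, choose $\bm v\in[H^1_0(\Omega)]^d$ with $\nabla\cdot\bm v=p_h$ and $|\bm v|_1\lesssim\|p_h\|$. Then set $\bm v_h=\bm\Pi_k^{\rm BDM}\bm v$ and $\widehat{\bm v}_h=\bm\Pi_{k-1}^\partial\bm v$. By \eqref{commutativity}, $(p_h,\nabla\cdot\bm v_h)=\|p_h\|^2$.

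It remains to bound the denominator by $|\bm v|_1$.
\begin{itemize}
\item For the jump term, $\bm\Pi_{k-1}^\partial\bm v_h-\widehat{\bm v}_h=\bm\Pi_{k-1}^\partial(\bm\Pi_k^{\rm BDM}\bm v-\bm v)$. It is controlled by the local trace inequality together with \eqref{BDM error}, as in \eqref{2.29}.
\item For the flux, I test its definition with $\mathbb G_h=\mathbb L_h^{\bm v_h,\widehat{\bm v}_h}$ and integrate by parts. This gives $\nu^{-1}\|\mathbb L_h\|^2=(\nabla\bm v_h,\mathbb L_h)-\langle\bm v_h-\widehat{\bm v}_h,\mathbb L_h\bm n\rangle$, so the flux is bounded by $\|\nabla\bm\Pi_k^{\rm BDM}\bm v\|$ plus $\|\mathfrak h^{-1/2}(\bm\Pi_k^{\rm BDM}\bm v-\bm\Pi^\partial_{k-1}\bm v)\|_{\partial\mathcal T_h}$.
\item Both of these pieces are $\lesssim|\bm v|_1$. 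This is the same argument as \eqref{2.26'} and \eqref{2.28}, using the inverse estimate \eqref{trace_2} for $\mathbb L_h\bm n$.
\end{itemize}

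The one care point is the polynomial degree of the local flux. $\mathbb L_h$ lives in $[\mathcal P_k]^{d\times d}$ here rather than $[\mathcal P_{k-1}]^{d\times d}$, so I cannot use orthogonality to drop terms and must rely purely on the trace and inverse bounds above; these hold for any fixed degree. Dividing then gives $\langle S{\bf p},{\bf p}\rangle\ge(p_h,\nabla\cdot\bm v_h)^2/\langle{\bm A}{\bf v},{\bf v}\rangle\ge c\|p_h\|^2$, which concludes the proof.
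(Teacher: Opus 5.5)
Your argument follows the paper's route. The supremum characterization of $\langle S{\bf p},{\bf p}\rangle$ reduces everything to the two-sided bound \eqref{schur-inf-sup}, and its lower half is the BDM Fortin construction of Step~3 in \Cref{Theorem 2.3}. You supply more than the paper, which only asserts the right-hand inequality of \eqref{schur-inf-sup}: your test with $\mathbb G_h=\nabla\bm v_h$ proves it and also gives positive definiteness of ${\bm A}$. Your remark that $\langle S{\bf p},{\bf p}\rangle$ scales like $\nu^{-1}\|p_h\|_{\mathcal T_h}^2$ is also correct, so only the condition-number claim is genuinely $\nu$-independent.

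What does not work as written is the $\nu$-bookkeeping inside the flux, which your caveat misses. Because of the factor $\nu^{-1}$ in the definition of the local flux, $\mathbb L_h^{\bm v_h,\widehat{\bm v}_h}=\nu\,\mathbb M_h$ with $\mathbb M_h$ a $\nu$-free discrete gradient. Your two identities therefore give
$\|\nabla\bm v_h\|_{\mathcal T_h}\lesssim\nu^{-1}\|\mathbb L_h^{\bm v_h,\widehat{\bm v}_h}\|_{\mathcal T_h}+\|\eta^{1/2}(\bm\Pi_{k-1}^{\partial}\bm v_h-\widehat{\bm v}_h)\|_{\partial\mathcal T_h}$
and, for the Fortin pair, $\|\mathbb L_h^{\bm v_h,\widehat{\bm v}_h}\|_{\mathcal T_h}\lesssim\nu|\bm v|_1$. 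These are not the $\nu$-free inequalities you state. With $\|\mathbb L_h^{\bm v_h,\widehat{\bm v}_h}\|_{\mathcal T_h}^2+\|\eta^{1/2}(\bm\Pi_{k-1}^{\partial}\bm v_h-\widehat{\bm v}_h)\|_{\partial\mathcal T_h}^2$ in the denominator, your upper and lower constants differ by a factor of order $\max\{\nu^{2},\nu^{-2}\}$. That destroys exactly the robustness you want. Moreover, the displayed $\langle{\bm A}{\bf v},{\bf v}\rangle$ equals $\nu^{3}\|\mathbb M_h\|_{\mathcal T_h}^2+\nu\|\eta^{1/2}(\bm\Pi_{k-1}^{\partial}\bm v_h-\widehat{\bm v}_h)\|_{\partial\mathcal T_h}^2$, so it has no single overall factor $\nu$ to remove.

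The repair is to redo the condensation. Test the flux definition for $(\bm v_h,\widehat{\bm v}_h)$ with $\mathbb G_h=\mathbb L_h$. This shows that eliminating $\mathbb L_h$ from \eqref{2.3} produces $\nu^{-1}(\mathbb L_h^{\bm u_h,\widehat{\bm u}_h},\mathbb L_h^{\bm v_h,\widehat{\bm v}_h})_{\mathcal T_h}$ rather than $\nu(\cdot,\cdot)$; the paper's displayed ${\bm A}$ and \eqref{schur-inf-sup} carry the same slip. Hence $\langle{\bm A}{\bf v},{\bf v}\rangle=\nu\bigl(\|\mathbb M_h\|_{\mathcal T_h}^2+\|\eta^{1/2}(\bm\Pi_{k-1}^{\partial}\bm v_h-\widehat{\bm v}_h)\|_{\partial\mathcal T_h}^2\bigr)$. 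Now run both of your estimates with $\mathbb M_h=\nu^{-1}\mathbb L_h^{\bm v_h,\widehat{\bm v}_h}$ in place of $\mathbb L_h^{\bm v_h,\widehat{\bm v}_h}$, which is precisely what your identities deliver. This gives $c\,\nu^{-1}\|p_h\|_{\mathcal T_h}^2\le\langle S{\bf p},{\bf p}\rangle\le C\,\nu^{-1}\|p_h\|_{\mathcal T_h}^2$ with $c,C$ independent of $h$ and $\nu$, and hence the $\nu$-robust condition-number bound.
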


\section{Application: tangential boundary control}\label{sec7}

In this section we consider the tangential Dirichlet boundary control problem in two space dimensions with $\nu=1$. We assume that $\{\mathcal T_h\}_{h>0}$ is a family of quasi-uniform triangulations, where $\mathcal E_h^o$ denotes the set of interior edges and $\mathcal E_h^{\partial}$ the set of boundary edges. To keep the presentation focused, we formulate the control discretization only for the BDM method with discontinuous traces; the other variants can be treated analogously. For earlier HDG discretizations of Dirichlet boundary control problems, see \cite{MR3831243,MR3992054,MR4057428,MR4169689,MR4381532,MR4527345}.

The tangential boundary control problem is formulated as follows: find $\bm u$ such that
\begin{align}
	\min_{\bm u\in \bm U}J(\bm u) = \frac 1 2\|\bm y_{\bm u}-\bm y_d\|_{\bm L^2(\Omega)}^2+\frac{\gamma}{2}\|\bm u\|_{\bm U}^2,
\end{align}
where $\bm y_d$ is the desired state and $\bm y_{\bm u}:=\bm y$ is the solution of 
\begin{subequations}\label{stokes-t}
	\begin{align}
		-\Delta\bm y+\nabla p&=\bm f&\text{in }\Omega,\\
		\nabla\cdot\bm y&=0&\text{in }\Omega,\\
		\bm y&=\bm u&\text{on }\partial\Omega,
	\end{align}
\end{subequations}
and $\gamma$ is a positive constant, $\bm U$ is defined as
\begin{align}
	\bm U:=\{\bm u=u\bm\tau:u\in L^2(\Gamma)\},
\end{align}
with the definition $\|\bm u\|_{\bm U}=\|u\|_{L^2(\Gamma)}$, $\bm\tau$ is the unit tangential vector to the boundary $\Gamma$.

The corresponding first-order optimality system is: find $(\bm y, p,\bm z,w,\bm u)$ such that
\begin{subequations}\label{op}
	\begin{align}
		-\Delta\bm y+\nabla p&=\bm f\text{ in }\Omega,\quad
		\nabla\cdot\bm y=0\text{ in }\Omega,\quad
		\bm y=\bm u ~\text{ on }\partial\Omega, \\
		-\Delta\bm z+\nabla w&=\bm y-\bm y_d\text{ in }\Omega,\quad
		\nabla\cdot\bm z=0\text{ in }\Omega,\quad
		\frac{\partial\bm z}{\partial\bm n}\cdot\bm\tau=\gamma\bm u\text{ on }\partial\Omega,
	\end{align}
\end{subequations}
We introduce $\mathbb L=\nabla\bm y$ and $\mathbb T=\nabla\bm z$ to obtain 
\begin{subequations}\label{op2}
	\begin{align}
		\mathbb L - \nabla\bm y&=\bm 0\text{ in }\Omega,\quad
		-\nabla\cdot\mathbb L+\nabla p=\bm f\text{ in }\Omega,\quad
		\nabla\cdot\bm y=0\text{ in }\Omega,\quad
		\bm y=\bm u~\text{on }\partial\Omega, \\
		\mathbb T - \nabla\bm z&=\bm 0\text{ in }\Omega,\quad
		-\nabla\cdot\mathbb T+\nabla w=\bm y-\bm y_d\text{ in }\Omega,\\
		\nabla\cdot\bm z&=0\text{ in }\Omega,\quad
		\frac{\partial\bm z}{\partial\bm n}\cdot\bm\tau=\gamma\bm u ~\text{ on }\partial\Omega.
	\end{align}
\end{subequations}

We introduce the finite element spaces as:
\begin{align*}
	\bm{\Sigma}_h&=\{\mathbb L_h\in \bm L^2(\Omega):\mathbb L_h|_T\in [\mathcal P_{k-1}(T)]^{2\times 2},\forall T\in\mathcal T_h\},\\
	\bm {V}_h^{\bm 0}&=\{\bm v_h\in  {\bm H({\rm div};\Omega)}: \bm v_h|_T\in [\mathcal P_k(T)]^2,\forall T\in\mathcal T_h,\bm v_h\cdot\bm n|_{\Gamma}=0\},\\
	\widehat{\bm V}_h^{\bm 0}&=\{\widehat{\bm v}_h\in\bm L^2(\mathcal E_h):\widehat{\bm v}_h|_E\in [\mathcal P_{k-1}(E)]^2,\forall E\in\mathcal E_h,	\widehat{\bm v}_h|_{\Gamma}=\bm 0
	\},\\
	U_h&=\{v_h\in L^2(\mathcal E_h^{\partial}):v_h|_E\in \mathcal P_{k-1}(E),\forall E\in\mathcal E_h^{\partial}
	\},\\
	Q_h&=\{q_h\in L_0^2(\Omega):q_h|_T\in \mathcal{P}_{k-1}(T),\forall T\in\mathcal T_h\}.
\end{align*}
The HDG method reads: find $(\mathbb L_h,\bm y_h, \widehat{\bm y}_h^o, p_h, \mathbb T_h,\bm z_h,\widehat{\bm z}_h,w_h,u_h)\in \bm{\Sigma}_h\times \bm V_h^{\bm 0}\times\widehat{\bm V}_h^{\bm 0}\times Q_h\times \bm{\Sigma}_h\times \bm V_h^{\bm 0}\times\widehat{\bm V}_h^{\bm 0}\times Q_h\times U_h$  such that 
\begin{subequations}
	\begin{align}\label{op-dis}
		\mathscr B(\mathbb L_h,\bm y_h,\widehat{\bm y}_h^o,p_h;
		\mathbb G_1,\bm v_1,\widehat{\bm v}_1,q_1)
		&=	(\bm f,\bm v_1)_{\mathcal T_h}   +\langle u_h\bm\tau,\mathbb G_1\bm n \rangle_{\Gamma}
		+\langle \eta u_h\bm\tau,\bm v_1\rangle_{\Gamma},
		\\
		\mathscr B(\mathbb T_h,\bm z_h,\widehat{\bm z}_h,w_h;
		\mathbb G_2,\bm v_2,\widehat{\bm v}_2,q_2)&=	(\bm y_h-\bm y_d,\bm v_2)_{\mathcal T_h},\\
		\langle\gamma u_h\bm\tau-\mathbb T_h\bm n+\eta\bm{\Pi}_k^{\partial}\bm z_h,\mu\bm\tau \rangle_{\Gamma}&=0
	\end{align}
\end{subequations}
holds for all $(\mathbb G_1,\bm v_1, \widehat{\bm v}_1, q_1, \mathbb G_2,\bm v_2, \widehat{\bm v}_2, q_2,\mu)\in \bm{\Sigma}_h\times \bm V_h^{\bm 0}\times\widehat{\bm V}_h^{\bm 0}\times Q_h\times \bm{\Sigma}_h\times \bm V_h^{\bm 0}\times\widehat{\bm V}_h^{\bm 0}\times Q_h\times U_h$.

To derive the control error estimates, we introduce the auxiliary discrete co-problems: find $(\mathbb L_h(u),\\ \bm y_h(u), \widehat{\bm y}_h^o(u), p_h(u), \mathbb T_h(u),\bm z_h(u),\widehat{\bm z}_h(u),w_h(u))\in \bm{\Sigma}_h\times \bm V_h^{\bm 0}\times\widehat{\bm V}_h^{\bm 0}\times Q_h\times \bm{\Sigma}_h\times \bm V_h^{\bm 0}\times\widehat{\bm V}_h^{\bm 0}\times Q_h$  such that 
\begin{align}
	&\mathscr B(\mathbb L_h(u),\bm y_h(u),\widehat{\bm y}_h^o(u),p_h(u);
	\mathbb G_1,\bm v_1,\widehat{\bm v}_1,q_1) \notag\\
	&\qquad=	(\bm f,\bm v_1)_{\mathcal T_h}   +\langle \Pi_{k-1}^{\partial} u\bm\tau,\mathbb G_1\bm n \rangle_{\Gamma}
	+\langle \eta \Pi_{k-1}^{\partial}u\bm\tau,\bm v_1\rangle_{\Gamma},
	\\
	&\mathscr B(\mathbb T_h(u),\bm z_h(u),\widehat{\bm z}_h(u),w_h;
	\mathbb G_2,\bm v_2,\widehat{\bm v}_2,q_2)=	(\bm y_h(u)-\bm y_d,\bm v_2)_{\mathcal T_h}.
\end{align}
holds for all   $(\mathbb G_1,\bm v_1, \widehat{\bm v}_1, q_1, \mathbb G_2,\bm v_2, \widehat{\bm v}_2, q_2)\in \bm{\Sigma}_h\times \bm V_h^{\bm 0}\times\widehat{\bm V}_h^{\bm 0}\times Q_h\times \bm{\Sigma}_h\times \bm V_h^{\bm 0}\times\widehat{\bm V}_h^{\bm 0}\times Q_h$.

We use \Cref{BDM error equation lemma} to get the following results.
\begin{lemma}
	Let $(\mathbb L,\bm y,p, \mathbb T, \bm z, w, u)\in   [H^{\epsilon}(\Omega)]^{2\times 2}\times[ H^1(\Omega)]^2\times (L_0^2(\Omega)\cap H^{\epsilon}(\Omega))\times [H^{\epsilon}(\Omega)]^{2\times 2}\times[ H^1(\Omega)]^2\times (L_0^2(\Omega)\cap H^{\epsilon}(\Omega))\times L^2(\Gamma)$  {with $0<\epsilon<1/2$} be the solution to \Cref{op2}, then we have
	\begin{align*}
		&\mathscr B(\bm{\Pi}_{k-1}^o\mathbb L,\bm{\Pi}_k^{BDM}\bm y,\widetilde{\bm\Pi}_{k-1}^{\partial}\bm y,\Pi_{k-1}^op;
		\mathbb G_1,\bm v_1,\widehat{\bm v}_1,q_1) \nonumber
		\\
		&\qquad=	(\bm f,\bm v_1)_{\mathcal T_h}  +E(\mathbb L,\bm y;\bm v_1,\widehat{\bm v}_1)
		+\langle \Pi_{k-1}^{\partial} u\bm\tau,\mathbb G_1\bm n \rangle_{\Gamma}
		+\langle \eta \Pi_{k-1}^{\partial} u\bm\tau,\bm v_1\rangle_{\Gamma},
		\\
		& \mathscr B(\bm{\Pi}_{k-1}^o\mathbb T,\bm{\Pi}_k^{BDM}\bm z,\bm\Pi_{k-1}^{\partial}\bm z,\Pi_{k-1}^o w;
		\mathbb G_2,\bm v_2,\widehat{\bm v}_2,q_2) \nonumber\\
		&\qquad=	(\bm\Pi_k^o\bm y-\bm y_d,\bm v_2)_{\mathcal T_h} +E(\mathbb T,\bm z;\bm v_2,\widehat{\bm v}_2)
	\end{align*}
	holds for all   $(\mathbb G_1,\bm v_1, \widehat{\bm v}_1, q_1, \mathbb G_2,\bm v_2, \widehat{\bm v}_2, q_2)\in \bm{\Sigma}_h\times \bm V_h^{\bm 0}\times\widehat{\bm V}_h^{\bm 0}\times Q_h\times \bm{\Sigma}_h\times \bm V_h^{\bm 0}\times\widehat{\bm V}_h^{\bm 0}\times Q_h$.
\end{lemma}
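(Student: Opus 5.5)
The plan is to apply \Cref{BDM error equation lemma} twice, once to the state and once to the adjoint subsystem of \eqref{op2} with $\nu=1$. The only new feature is that the Dirichlet datum $\bm u=u\bm\tau$ is no longer zero, so the boundary traces of the projections do not vanish on $\Gamma$. For the state, I first use the interior identities exactly as in \Cref{BDM error equation lemma}:
\begin{itemize}
\item $R_1$ is handled by the orthogonality of $\bm\Pi_{k-1}^{o}$, the property \eqref{2.6} of $\bm\Pi_k^{\rm BDM}$ and integration by parts;
\item $R_2$ is handled by \Cref{itegration}, which only needs $\mathbb L,p\in H^\epsilon$, together with the orthogonality of $\Pi_{k-1}^o$;
\item $R_3$ is handled by the commutativity \eqref{commutativity} and $\nabla\cdot\bm y=0$.
\end{itemize}
The point is that \Cref{itegration} uses only $\bm v_1\in\bm V_h^{\bm 0}$, i.e.\ $\bm v_1\cdot\bm n=0$ on $\Gamma$, so the pressure boundary term still vanishes. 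The pressure enters only through $(p,\nabla\cdot\bm v_1)$, which is why $H^\epsilon$ regularity suffices.

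The key step is bookkeeping the boundary faces. I take $\widetilde{\bm\Pi}_{k-1}^{\partial}\bm y$ equal to $\bm\Pi_{k-1}^\partial\bm y$ on interior edges and $\bm 0$ on $\Gamma$, so that it lies in $\widehat{\bm V}_h^{\bm 0}$.

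In $R_1$, the term $-\langle\widetilde{\bm\Pi}^\partial_{k-1}\bm y,\mathbb G_1\bm n\rangle_{\partial\mathcal T_h}$ differs from the full trace term by $+\langle \bm\Pi_{k-1}^\partial \bm y,\mathbb G_1\bm n\rangle_\Gamma$. Since $\mathbb G_1\bm n|_E\in\mathcal P_{k-1}$, this equals $\langle \bm y,\mathbb G_1\bm n\rangle_\Gamma$. Because $\bm y|_\Gamma=u\bm\tau$ and $\bm\tau$ is constant on each straight edge, it becomes $\langle \Pi_{k-1}^\partial u\bm\tau,\mathbb G_1\bm n\rangle_\Gamma$.

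In the stabilization term, $\bm\Pi_{k-1}^\partial\bm v_1-\widehat{\bm v}_1=\bm\Pi_{k-1}^\partial\bm v_1$ on $\Gamma$. The difference between using $\widetilde{\bm\Pi}^\partial_{k-1}\bm y$ and $\bm\Pi^\partial_{k-1}\bm y$ there produces $\langle\eta\bm\Pi_{k-1}^\partial \bm y,\bm\Pi_{k-1}^\partial\bm v_1\rangle_\Gamma=\langle \eta\Pi_{k-1}^\partial u\bm\tau,\bm v_1\rangle_\Gamma$. The remaining parts give the consistency error $E(\mathbb L,\bm y;\bm v_1,\widehat{\bm v}_1)$. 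The BDM normal moments vanish on $\Gamma$, since $\bm y\cdot\bm n=0$ there, so $\bm\Pi_k^{\rm BDM}\bm y\in\bm V_h^{\bm 0}$, which makes the test/trial spaces consistent.

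In $R_2$, the boundary part of the term $\langle(\bm\Pi_{k-1}^o\mathbb L-\mathbb L)\bm n,\widehat{\bm v}_1-\bm v_1\rangle$ is unaffected, because $\widehat{\bm v}_1=0$ on $\Gamma$ in both cases. Collecting everything and using $\mathbb L=\nabla\bm y$ and $-\nabla\cdot(\mathbb L-p\bm I)=\bm f$ gives the first identity.

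For the adjoint, $\bm z$ is not prescribed on $\Gamma$ in \eqref{op2}. However, the adjoint of a Dirichlet problem has $\bm z=0$ on $\Gamma$ (implicit in the transposition formulation), so I will use $\bm z\in H^1_0$. Then $\bm\Pi_{k-1}^\partial\bm z\in\widehat{\bm V}_h^{\bm 0}$, and \Cref{BDM error equation lemma} applies verbatim with right-hand side $\bm y-\bm y_d$. Since $\bm v_2$ is piecewise $\mathcal P_k$, $(\bm y,\bm v_2)_{\mathcal T_h}=(\bm\Pi_k^o\bm y,\bm v_2)_{\mathcal T_h}$, which yields the stated form.

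I expect the main obstacle to be justifying the boundary manipulations on $\Gamma$, namely the identification $\bm\Pi_{k-1}^\partial(u\bm\tau)=(\Pi^\partial_{k-1}u)\bm\tau$ edgewise and the sign conventions relating $\widetilde{\bm\Pi}$ to $\bm\Pi$ in the $\eta$-term.
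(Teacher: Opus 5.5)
Your proposal is correct and takes the same route as the paper, whose entire proof is to invoke \Cref{BDM error equation lemma} for the state and adjoint subsystems (with $\nu=1$). Your treatment of the boundary edges supplies the details the paper leaves implicit. You apply that lemma with the full trace $\bm\Pi_{k-1}^{\partial}\bm y$, which is legitimate because its proof never uses boundary values of the trial trace. You then split off $\chi_\Gamma(\Pi_{k-1}^{\partial}u)\bm\tau$ to produce the two $\Gamma$-terms, and replace $\bm y$ by $\bm\Pi_k^o\bm y$ against piecewise $\mathcal P_k$ test functions. Along the way you also give a sensible definition of the otherwise undefined $\widetilde{\bm\Pi}_{k-1}^{\partial}$.
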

To simplify the notation, we define
\begin{align*}
	\zeta_{\mathbb{L}}=\mathbb{L}_h(u)-\mathbb{L}_h, \quad \zeta_{\boldsymbol{y}}=\boldsymbol{y}_h(u)-\boldsymbol{y}_h, \quad \zeta_p=p_h(u)-p_h,  \\
	\zeta_{\mathbb{T}}=\mathbb{T}_h(u)-\mathbb{T}_h, \quad \zeta_z=\boldsymbol{z}_h(u)-\boldsymbol{z}_h, \quad \zeta_w=w_h(u)-w_h,\\
	\zeta_{\widehat{\boldsymbol{y}}}=\widehat{\boldsymbol{y}}_h^o(u)-\widehat{\boldsymbol{y}}_h^o \text { on } \mathcal{E}_h^o \text { and } \zeta_{\widehat{\boldsymbol{y}}}=\Pi_{k-1}^{\partial} u \boldsymbol{\tau}-u_h \boldsymbol{\tau} \text { on } \mathcal{E}_h^{\partial}, \\
	\zeta_{\widehat{z}}=\widehat{\boldsymbol{z}}_h(u)-\widehat{\boldsymbol{z}}_h \text { on } \mathcal{E}_h^o \text { and } \zeta_{\widehat{z}}=0 \text { on } \mathcal{E}_h^{\partial} .
\end{align*}

With the same proof in \cite{MR4527345} we can get the following result.
\begin{lemma}
	We have
	\begin{align*}
		\gamma\left\|u-u_h\right\|_{\mathcal{E}_h^{\partial}}^2+\left\|\zeta_{\boldsymbol{y}}\right\|_{\mathcal{T}_h}^2= & \langle\gamma u \boldsymbol{\tau}-\mathbb{T}_h(u) \boldsymbol{n}+\eta \boldsymbol{\Pi}_{k-1}^{\partial} \boldsymbol{z}_h(u),\left(u-u_h\right) \boldsymbol{\tau}\rangle_{\mathcal{E}_h^{\partial}} \\
		& -\langle\gamma u_h \boldsymbol{\tau}-\mathbb{T}_h \boldsymbol{n}+\eta \boldsymbol{\Pi}_{k-1}^{\partial} \boldsymbol{z}_h,\left(u-u_h\right) \boldsymbol{\tau}\rangle_{\mathcal{E}_h^{\partial}} .
	\end{align*}
\end{lemma}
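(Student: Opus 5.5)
The identity is obtained by testing the two discrete systems (the auxiliary one driven by the exact control $u$, and the fully discrete one driven by $u_h$) against each other. This mirrors the standard argument from \cite{MR4527345}.

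\textbf{Step 1: the $\zeta$-system.} Subtract the discrete optimality system \eqref{op-dis} from the auxiliary co-problems. Because $\mathscr B$ is bilinear, the state differences $(\zeta_{\mathbb L},\zeta_{\bm y},\zeta_{\widehat{\bm y}},\zeta_p)$ satisfy
\[
\mathscr B(\zeta_{\mathbb L},\zeta_{\bm y},\zeta_{\widehat{\bm y}}^o,\zeta_p;\mathbb G_1,\bm v_1,\widehat{\bm v}_1,q_1)
=\langle (\Pi_{k-1}^{\partial}u-u_h)\bm\tau,\mathbb G_1\bm n\rangle_{\Gamma}+\langle \eta(\Pi_{k-1}^{\partial}u-u_h)\bm\tau,\bm v_1\rangle_{\Gamma}.
\]
The adjoint differences satisfy
\[
\mathscr B(\zeta_{\mathbb T},\zeta_{\bm z},\zeta_{\widehat{\bm z}},\zeta_w;\mathbb G_2,\bm v_2,\widehat{\bm v}_2,q_2)=(\zeta_{\bm y},\bm v_2)_{\mathcal T_h}.
\]
The boundary data of the state difference have been absorbed into the extended trace $\zeta_{\widehat{\bm y}}$, which equals $(\Pi^\partial_{k-1}u-u_h)\bm\tau$ on $\mathcal E_h^\partial$. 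With this convention $\mathscr B$, written with all boundary terms included, reproduces exactly the right-hand side above. I will check this carefully, since it is where the $\eta$-terms and $\mathbb G\bm n$-terms on $\Gamma$ originate.

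\textbf{Step 2: cross-testing and the adjoint identity.} Test the adjoint difference equation with $(\mathbb G_2,\bm v_2,\widehat{\bm v}_2,q_2)=(-\zeta_{\mathbb L},\zeta_{\bm y},\zeta_{\widehat{\bm y}}^o,-\zeta_p)$. Test the state difference equation with $(\mathbb G_1,\bm v_1,\widehat{\bm v}_1,q_1)=(\zeta_{\mathbb T},-\zeta_{\bm z},-\zeta_{\widehat{\bm z}},\zeta_w)$.

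Use the antisymmetry identity $\mathscr B(\mathbb L,-\bm u,\widehat{\bm u},p;\mathbb G,\bm v,\widehat{\bm v},q)=\mathscr B(\mathbb G,-\bm v,\widehat{\bm v},q;\mathbb L,\bm u,\widehat{\bm u},p)$, already used in the proof of \Cref{theorem 2.8}. It shows that the two left-hand sides differ only by boundary contributions on $\Gamma$ that come from the nonzero trace of $\zeta_{\widehat{\bm y}}$ there.

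Equating the two expressions gives $\|\zeta_{\bm y}\|^2_{\mathcal T_h}$. It equals a boundary pairing of $-\zeta_{\mathbb T}\bm n+\eta\bm\Pi^\partial_{k-1}\zeta_{\bm z}$ with $(\Pi^\partial_{k-1}u-u_h)\bm\tau$ on $\mathcal E_h^\partial$. Since $\mathbb T_h\bm n$ and $\bm\Pi^\partial_{k-1}\bm z_h$ are polynomials of degree $k-1$ on boundary edges, $\Pi^\partial_{k-1}u$ may be replaced by $u$. This gives
\[
\|\zeta_{\bm y}\|^2_{\mathcal T_h}=\langle -(\mathbb T_h(u)-\mathbb T_h)\bm n+\eta\bm\Pi^\partial_{k-1}(\bm z_h(u)-\bm z_h),(u-u_h)\bm\tau\rangle_{\mathcal E_h^\partial}.
\]

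\textbf{Step 3: adding the regularization term.} Add the trivial identity $\gamma\|u-u_h\|^2_{\mathcal E_h^\partial}=\langle\gamma u\bm\tau-\gamma u_h\bm\tau,(u-u_h)\bm\tau\rangle_{\mathcal E_h^\partial}$, using $\bm\tau\cdot\bm\tau=1$. Regrouping the terms belonging to the $u$-problem and to the $u_h$-problem gives exactly the stated right-hand side.

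\textbf{Main obstacle.} The delicate point is Step 2: the boundary-term bookkeeping, specifically confirming that all contributions on $\Gamma$ combine with the correct signs and projections. This relies on $\zeta_{\widehat{\bm z}}=0$ and $\zeta_{\bm z}\cdot\bm n=0$ on $\Gamma$, and on the orthogonality of $\bm\Pi^\partial_{k-1}$. I would verify it by expanding $\mathscr B$ term by term on boundary edges, as in \cite{MR4527345}.
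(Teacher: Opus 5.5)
Your proposal is correct and is the same argument the paper invokes by reference to \cite{MR4527345}. You subtract the auxiliary and discrete systems, then test the adjoint $\zeta$-equation with $(-\zeta_{\mathbb L},\zeta_{\bm y},\zeta_{\widehat{\bm y}}^o,-\zeta_p)$ and the state $\zeta$-equation with $(\zeta_{\mathbb T},-\zeta_{\bm z},-\zeta_{\widehat{\bm z}},\zeta_w)$. The two $\mathscr B$-terms then cancel exactly, leaving $\|\zeta_{\bm y}\|^2_{\mathcal T_h}=\langle-\zeta_{\mathbb T}\bm n+\eta\bm\Pi^{\partial}_{k-1}\zeta_{\bm z},(u-u_h)\bm\tau\rangle_{\mathcal E_h^{\partial}}$, and you add $\gamma\|u-u_h\|^2_{\mathcal E_h^{\partial}}$.

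The only slip is the symmetry you quote from the proof of \Cref{theorem 2.8}: as written it is false, and it must flip the trace together with the velocity, namely $\mathscr B(\mathbb L,\bm u,\widehat{\bm u},p;\mathbb G,\bm v,\widehat{\bm v},q)=\mathscr B(\mathbb G,-\bm v,-\widehat{\bm v},q;\mathbb L,-\bm u,-\widehat{\bm u},p)$. Your test functions already respect this, so the conclusion is unaffected.
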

{
	
	In the following parts of this section, we assume the solution of \eqref{op}-\eqref{op2} satisfies
	$$
	\boldsymbol{y} \in \boldsymbol{H}^{r_{\bm y}}(\Omega), \quad \bm z \in \boldsymbol{H}^{r_{\bm z}}(\Omega),
	\text{ with }
	r_{\bm y}>\frac 3 2, \quad r_{\bm z}>2.
	$$
	We set 
	$$
	s_{\bm y}=\min \left\{r_{\bm y}, k+1\right\},\quad s_{\bm z}=\min \left\{r_{\bm z}, k+1\right\}.
	$$

}
\endgroup

By \Cref{th-35,theorem 2.8}  we have the error estimates as follows.
\begin{lemma} We have the error estimate
	\begin{align*}
		\interleave(\bm{\Pi}_{k-1}^o\mathbb L-\mathbb L_h(u),\bm{\Pi}_{k}^{BDM}\bm y-\bm y_h(u),\widetilde{\bm{\Pi}}_{k-1}^{\partial}\bm y-{\bm y}^o_h(u))\interleave&\lesssim h^{s_{\bm y-1}}|\bm y|_{s_{\bm y}},\\
		\|\bm{\Pi}_{k-1}^{\partial}\bm y-{\bm y}_h(u)\|_{\mathcal T_h}&\lesssim h^{s_{\bm y}}|\bm y|_{s_{\bm y}} {.}
	\end{align*}
\end{lemma}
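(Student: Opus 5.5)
The plan is to treat the auxiliary problem for $(\mathbb L_h(u),\bm y_h(u),\widehat{\bm y}_h^o(u),p_h(u))$ as exactly the BDM discontinuous-trace scheme of \Cref{first Scheme} with $\nu=1$. Its Dirichlet datum is $\bm g=u\bm\tau$: the trace on $\Gamma$ is the projected data $\Pi_{k-1}^{\partial}u\bm\tau$, and the normal component vanishes since $\bm g\cdot\bm n=0$. First I will move the boundary contributions $\langle \Pi_{k-1}^{\partial}u\bm\tau,\mathbb G_1\bm n\rangle_\Gamma+\langle\eta\Pi_{k-1}^{\partial}u\bm\tau,\bm v_1\rangle_\Gamma$ to the left-hand side, identifying $\widehat{\bm y}_h$ on $\mathcal E_h^\partial$ with $\Pi_{k-1}^{\partial}u\bm\tau=\bm\Pi_{k-1}^{\partial}\bm y|_\Gamma$. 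This recovers the homogeneous-trace form of $\mathscr B$.

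Next I subtract this identity from the projection equation of the preceding lemma. On boundary edges the projected trace $\widetilde{\bm\Pi}_{k-1}^{\partial}\bm y$ coincides with $\Pi_{k-1}^{\partial}u\bm\tau$, so the boundary terms cancel. What remains is
\[
\mathscr B(\bm\Pi_{k-1}^o\mathbb L-\mathbb L_h(u),\bm\Pi_k^{BDM}\bm y-\bm y_h(u),\widetilde{\bm\Pi}_{k-1}^{\partial}\bm y-\widehat{\bm y}_h^o(u),\Pi_{k-1}^op-p_h(u);\mathbb G_1,\bm v_1,\widehat{\bm v}_1,q_1)=E(\mathbb L,\bm y;\bm v_1,\widehat{\bm v}_1).
\]
All discrete error components lie in the homogeneous spaces. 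The velocity error is admissible because $\bm\Pi_k^{BDM}\bm y\cdot\bm n=\Pi_k^\partial(\bm y\cdot\bm n)=0$ on $\Gamma$.

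Applying the inf-sup condition \Cref{Theorem 2.3} then reduces the energy estimate to bounding $E$. For this I repeat \Cref{BDM_E} with $k$ replaced by the available regularity $s_{\bm y}-1$. The approximation properties in \Cref{projection properties} and \eqref{BDM error} hold for $m=s_{\bm y}\le k+1$, which gives $|E|\lesssim h^{s_{\bm y}-1}|\bm y|_{s_{\bm y}}\interleave\cdot\interleave$, i.e.\ the first estimate as in \Cref{th-35}. The requirement $r_{\bm y}>3/2$ ensures that $\mathbb L\in H^{\epsilon}$ and that the traces in $E_1,E_2$ make sense.

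For the $L^2$ bound I will follow the duality argument of \Cref{theorem 2.8}, using the dual Stokes problem with source $\bm\Pi_k^{BDM}\bm y-\bm y_h(u)$ and the regularity \eqref{regularity} on the convex domain. Two terms must be controlled: $E(\mathbb L,\bm y;\bm\Pi^o_{k-1}\bm\phi,\bm\Pi_k^{BDM}\bm\xi,\bm\Pi_{k-1}^\partial\bm\xi)$, which is $\lesssim h^{s_{\bm y}}|\bm y|_{s_{\bm y}}|\bm\xi|_2$, and $E(\bm\phi,\bm\xi;\cdot)$, estimated by $h|\bm\xi|_2$ times the energy error $h^{s_{\bm y}-1}$. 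This gives $\|\bm\Pi_k^{BDM}\bm y-\bm y_h(u)\|\lesssim h^{s_{\bm y}}|\bm y|_{s_{\bm y}}$. The stated quantity (written with $\bm\Pi_{k-1}^{\partial}$, presumably a projection of $\bm y$) then follows by the triangle inequality with the projection error.

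The main obstacle is the duality step. Because $\widehat{\bm y}_h(u)$ carries nonhomogeneous tangential data, the identity $\langle(\bm\Pi^o_{k-1}\mathbb L-\mathbb L)\bm n,\bm\Pi^\partial_{k-1}\bm\xi\rangle=\langle(\bm\Pi^o_{k-1}\mathbb L-\mathbb L)\bm n,\bm\xi\rangle$ must be checked on boundary edges. This works since $\bm\xi=0$ there, but only with the low regularity $s_{\bm y}<2$ permitted. I would verify it using $\bm\xi\in H^2\cap H^1_0$ and trace estimates edgewise.
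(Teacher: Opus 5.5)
Your proposal is correct and takes essentially the same route as the paper, whose proof is only a one-line appeal to \Cref{th-35} and \Cref{theorem 2.8} applied to the auxiliary problem with boundary data $\Pi_{k-1}^{\partial}u\bm\tau$. Your explicit steps are exactly the adaptations that citation leaves implicit: cancelling the boundary-data terms when subtracting the projection equation, bounding $E$ with the reduced index $s_{\bm y}$ in place of $k+1$, and reading the second estimate as a bound on $\bm\Pi_k^{BDM}\bm y-\bm y_h(u)$.
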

We use the similar proof in \Cref{th-35,theorem 2.8} to get
\begin{lemma} We have the error estimate
	\begin{align*}
		\interleave(\bm{\Pi}_{k-1}^o\mathbb T-\mathbb T_h(u),\bm{\Pi}_{k}^{BDM}\bm z-\bm z_h(u),\bm{\Pi}_{k-1}^{\partial}\bm z-{\bm z}_h(u))\interleave&\lesssim h^{s_{\bm y}}|\bm y|_{s_{\bm y}}+h^{s_{\bm z}-1}|\bm z|_{s_{\bm z}},\\
		\|\bm{\Pi}_{k-1}^{\partial}\bm z-{\bm z}_h(u)\|_{\mathcal T_h}&\lesssim h^{s_{\bm y}}|\bm y|_{s_{\bm y}}+h^{s_{\bm z}-1}|\bm z|_{s_{\bm z}}.
	\end{align*}
\end{lemma}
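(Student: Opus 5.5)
The plan is to treat the adjoint auxiliary problem as a perturbation of the standard Stokes analysis of \Cref{th-35,theorem 2.8}. The only new feature is that its right-hand side uses the discrete state $\bm y_h(u)$, while the continuous adjoint uses $\bm y$.

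First, I would subtract the consistency identity for $(\mathbb T,\bm z,w)$ from the preceding lemma from the defining equation of $(\mathbb T_h(u),\bm z_h(u),\widehat{\bm z}_h(u),w_h(u))$. With
\[
\bm\xi^{\mathbb T}=\bm\Pi_{k-1}^o\mathbb T-\mathbb T_h(u),\qquad
\bm\xi^{\bm z}=\bm\Pi_k^{BDM}\bm z-\bm z_h(u),
\]
and analogously $\bm\xi^{\widehat{\bm z}}$ and $\xi^w$, this gives
\[
\mathscr B(\bm\xi^{\mathbb T},\bm\xi^{\bm z},\bm\xi^{\widehat{\bm z}},\xi^w;\mathbb G_2,\bm v_2,\widehat{\bm v}_2,q_2)
=(\bm\Pi_k^o\bm y-\bm y_h(u),\bm v_2)_{\mathcal T_h}+E(\mathbb T,\bm z;\bm v_2,\widehat{\bm v}_2).
\]
The boundary data of $\bm z$ are homogeneous, so no boundary-control terms appear here.

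Second, I would apply the inf-sup condition \eqref{BDMinfsup1} with $\nu=1$. By \Cref{BDM_E} with $k$ replaced by $s_{\bm z}-1$, the consistency term is bounded by $h^{s_{\bm z}-1}|\bm z|_{s_{\bm z}}\interleave(\mathbb G_2,\bm v_2,\widehat{\bm v}_2)\interleave$. For the source term I need a discrete Poincaré inequality
\[
\|\bm v_2\|_{\mathcal T_h}\lesssim\interleave(\mathbb G_2,\bm v_2,\widehat{\bm v}_2)\interleave,
\]
which is standard for HDG norms with zero boundary trace. This reduces the source term to $\|\bm\Pi_k^o\bm y-\bm y_h(u)\|_{\mathcal T_h}$. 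I would split it as
\[
\|\bm\Pi_k^o\bm y-\bm y\|+\|\bm y-\bm\Pi_k^{BDM}\bm y\|+\|\bm\Pi_k^{BDM}\bm y-\bm y_h(u)\|.
\]
The first two pieces are $O(h^{s_{\bm y}}|\bm y|_{s_{\bm y}})$ by \Cref{projection properties} and \eqref{BDM error}. The last piece is $O(h^{s_{\bm y}}|\bm y|_{s_{\bm y}})$ by the state $L^2$ estimate in the previous lemma. Together these give the energy-norm bound.

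Third, for the $L^2$ bound I would bound $\|\bm\Pi_k^{BDM}\bm z-\bm z_h(u)\|$ by the energy error through the same discrete Poincaré inequality. Then I would add the projection error $\|\bm\Pi_{k-1}^{\partial}\bm z-\bm\Pi_k^{BDM}\bm z\|$, interpreted elementwise, which is of higher order. Since the stated bound only asks for $h^{s_{\bm z}-1}$, no duality argument as in \Cref{theorem 2.8} is needed.

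The main obstacle is the discrete Poincaré inequality on $\bm V_h^{\bm 0}\times\widehat{\bm V}_h^{\bm 0}$. The trace $\widehat{\bm v}_h$ is compared only with $\bm\Pi_{k-1}^{\partial}\bm v_h$, not with $\bm v_h$ itself. I would handle this by the standard broken-$H^1$ Poincaré inequality with jump terms. The jumps of $\bm v_h$ are controlled by $\bm\Pi_{k-1}^{\partial}\bm v_h-\widehat{\bm v}_h$ plus $(I-\bm\Pi_{k-1}^{\partial})\bm v_h$. The latter is bounded by $h^{1/2}\|\nabla\bm v_h\|$ using the trace and inverse estimates, exactly as in the proof of \Cref{BDM_E}.
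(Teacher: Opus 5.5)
Your proof is correct. For the energy estimate it follows the route the paper intends when it says the proof is ``similar'' to \Cref{th-35,theorem 2.8}. That route is: the error equation, then the inf-sup condition \eqref{BDMinfsup1} with $\nu=1$, then the consistency bound of \Cref{BDM_E}. More precisely, you need the proof of \Cref{BDM_E} with the approximation estimates used at order $m=s_{\bm z}$; the polynomial degree $k$ itself is not changed. The new ingredient, which the paper leaves implicit, is the source term $(\bm\Pi_k^o\bm y-\bm y_h(u),\bm v_2)_{\mathcal T_h}$ inherited from the state. Your discrete Poincaré inequality on $\bm V_h^{\bm 0}\times\widehat{\bm V}_h^{\bm 0}$ handles it correctly, with the jumps split into $\bm\Pi_{k-1}^{\partial}\bm v_h-\widehat{\bm v}_h$ and $(I-\bm\Pi_{k-1}^{\partial})\bm v_h$. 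Since $k-1\ge 0$, $\bm\Pi_{k-1}^{\partial}$ preserves face means, so a Poincaré--Friedrichs inequality involving only mean jumps would give it even more directly.

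For the second estimate you genuinely depart from the paper. The pointer to \Cref{theorem 2.8} means a duality argument, which requires the regularity hypothesis \eqref{regularity} and would yield the sharper bound $h^{s_{\bm y}}|\bm y|_{s_{\bm y}}+h^{s_{\bm z}}|\bm z|_{s_{\bm z}}$. You instead observe that the stated rate equals the energy rate, so the same Poincaré inequality suffices with no elliptic regularity at all. Your route is shorter and assumption-free but cannot produce the extra power of $h$; duality is needed if the $L^2$ estimate is meant to be optimal.

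Finally, the second displayed quantity in the statement is a misprint, and the third slot of the norm should contain $\widehat{\bm z}_h(u)$. The meaningful reading is $\|\bm\Pi_k^{BDM}\bm z-\bm z_h(u)\|_{\mathcal T_h}$, which your Poincaré step already bounds. Your additional ``elementwise'' projection step is therefore unnecessary, and it is not meaningful if $\bm\Pi_{k-1}^{\partial}$ is taken literally as a face operator.
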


\begin{lemma}
	Let $(\boldsymbol{y}, u)$ and $\left(\boldsymbol{y}_h, u_h\right)$ be the solutions of \eqref{op2} and \eqref{op-dis}, respectively. We have
	$$
	\begin{aligned}
		\left\|u-u_h\right\|_{\mathcal{E}_h^{\partial}} + \left\|\boldsymbol{y}-\boldsymbol{y}_h\right\|_{\mathcal{T}_h} \lesssim h^{s_{\bm y}-\frac{1}{2}}|\boldsymbol{y}|_{s_{\boldsymbol{y}}}+h^{s_{\bm z}-\frac{3}{2}}|\bm z|_{s_{\bm z}} .
	\end{aligned}
	$$
\end{lemma}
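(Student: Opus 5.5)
We follow the standard splitting used for HDG Dirichlet boundary control (as in \cite{MR4527345}), writing
\[
u-u_h=(u-\Pi_{k-1}^{\partial}u)+(\Pi_{k-1}^{\partial}u-u_h),\qquad
\bm y-\bm y_h=(\bm y-\bm y_h(u))+\zeta_{\bm y}.
\]
The first pieces are projection or auxiliary-problem errors. The term $\bm y-\bm y_h(u)$ is controlled by the previous lemma for the auxiliary state, giving $O(h^{s_{\bm y}})$ in $L^2$. Since $u$ is the tangential trace of $\bm y$ and $\bm y\in\bm H^{s_{\bm y}}$, the trace theorem gives
\[
\|u-\Pi_{k-1}^{\partial}u\|_{\mathcal E_h^\partial}\lesssim h^{s_{\bm y}-1/2}|\bm y|_{s_{\bm y}}.
\]
This is the source of the half-power loss. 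The main work is therefore bounding $\gamma\|u-u_h\|^2_{\mathcal E_h^\partial}+\|\zeta_{\bm y}\|^2_{\mathcal T_h}$ via the energy identity of the preceding lemma.

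In that identity, the second boundary term vanishes by the discrete optimality condition, since $u-u_h$ may be replaced by $\Pi_{k-1}^\partial u-u_h\in U_h$ up to the projection remainder. For the first term, we insert the continuous optimality condition $\partial_{\bm n}\bm z\cdot\bm\tau=\gamma u$ and the homogeneous condition $\bm z=0$ on $\Gamma$. This gives
\[
\gamma u\bm\tau-\mathbb T_h(u)\bm n+\eta\bm\Pi_{k-1}^\partial\bm z_h(u)
=(\mathbb T-\mathbb T_h(u))\bm n+\eta\bm\Pi_{k-1}^\partial(\bm z_h(u)-\bm z),
\]
tested against $(u-u_h)\bm\tau$. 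We split $\mathbb T-\mathbb T_h(u)=(\mathbb T-\bm\Pi^o_{k-1}\mathbb T)+(\bm\Pi^o_{k-1}\mathbb T-\mathbb T_h(u))$. The projection part is bounded by the trace approximation estimate by $h^{s_{\bm z}-3/2}|\bm z|_{s_{\bm z}}$. The discrete part and the $\eta$-weighted part are bounded by the inverse/trace inequality \eqref{trace_2} combined with the energy-norm bound of the auxiliary adjoint lemma. Here $\|\eta^{1/2}(\cdot)\|_{\partial\mathcal T_h}$ is controlled by the triple norm, and one further factor $h^{-1/2}$ appears because $\eta=h^{-1}$ on the quasi-uniform mesh. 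This yields $\lesssim(h^{s_{\bm y}-1/2}|\bm y|_{s_{\bm y}}+h^{s_{\bm z}-3/2}|\bm z|_{s_{\bm z}})\|u-u_h\|_{\mathcal E_h^\partial}$. Young's inequality then absorbs $\|u-u_h\|$ into the left-hand side.

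The main obstacle is the boundary term $\langle \eta\bm\Pi_{k-1}^\partial\bm z_h(u),(u-u_h)\bm\tau\rangle_\Gamma$. Its naive bound loses $h^{-1}$ rather than $h^{-1/2}$. We would first rewrite it as $\eta\bm\Pi_{k-1}^\partial(\bm z_h(u)-\bm\Pi_k^{\rm BDM}\bm z)+\eta\bm\Pi_{k-1}^\partial(\bm\Pi_k^{\rm BDM}\bm z-\bm z)$, using $\bm z|_\Gamma=0$ and $\widehat{\bm z}_h=0$ on $\Gamma$. The first piece is bounded by $h^{-1/2}$ times the $\eta^{1/2}$-jump part of the triple norm in the adjoint lemma. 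The second piece is bounded by the BDM trace approximation, which gives $h^{s_{\bm z}-3/2}$.

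The adjoint energy estimate also carries an $h^{s_{\bm y}}$ contribution from $\bm y_h(u)$ in its source. After the $h^{-1/2}$ loss this becomes $h^{s_{\bm y}-1/2}$, which is consistent with the claim. Finally, the triangle inequality combines the bound on $\zeta_{\bm y}$ with the auxiliary state estimate to conclude.
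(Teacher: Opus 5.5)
Your proposal is correct and follows essentially the same route as the paper. Both arguments use the energy identity, then the continuous and discrete optimality conditions to reduce to $\langle(\mathbb T-\mathbb T_h(u))\bm n+\eta\bm\Pi_{k-1}^{\partial}\bm z_h(u),(u-u_h)\bm\tau\rangle_{\mathcal E_h^{\partial}}$, then rewrite $\bm\Pi_{k-1}^{\partial}\bm z_h(u)$ via $\widehat{\bm z}_h(u)=\bm z=\bm 0$ on $\Gamma$ and the BDM interpolant, bound it by the auxiliary adjoint energy estimate with one $h^{-1/2}$ trace loss, and finish with a triangle inequality.

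The only blemish is your preliminary splitting of $u-u_h$, which is unnecessary and partly incorrect. The ``projection remainder'' vanishes exactly, because the discrete residual is edgewise in $\mathcal P_{k-1}$ and $\bm\tau$ is constant on each edge. Moreover, the bound $\|u-\Pi_{k-1}^{\partial}u\|_{\mathcal E_h^{\partial}}\lesssim h^{s_{\bm y}-1/2}|\bm y|_{s_{\bm y}}$ is not available with degree-$(k-1)$ edge polynomials when $s_{\bm y}=k+1$. The half-power loss really comes from the discrete trace factor, as you correctly note later.
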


\begingroup
\begin{proof}
	Since $\gamma u \boldsymbol{\tau}-\mathbb{T} \boldsymbol{n}=0$ on $\mathcal{E}_h^{\partial}$ and $\gamma u_h \boldsymbol{\tau}-\mathbb{T}_h \boldsymbol{n}+\eta\boldsymbol{\Pi}_{k-1}^{\partial} \boldsymbol{z}_h=0$ on $\mathcal{E}_h^{\partial}$ we have
	$$
	\begin{aligned}
		\gamma\left\|u-u_h\right\|_{\mathcal{E}_h^{\partial}}^2+\left\|\zeta_{\boldsymbol{y}}\right\|_{\mathcal{T}_h}^2 & =\langle\gamma u \boldsymbol{\tau}-\mathbb{T}_h(u) \boldsymbol{n}+\eta \boldsymbol{\Pi}_{k-1}^{\partial} \boldsymbol{z}_h(u),\left(u-u_h\right) \boldsymbol{\tau}\rangle_{\mathcal{E}_h^{\partial}} \\
		& =\langle\left(\mathbb{T}-\mathbb{T}_h(u)\right) \boldsymbol{n}+\eta \boldsymbol{\Pi}_{k-1}^{\partial} \boldsymbol{z}_h(u),\left(u-u_h\right) \boldsymbol{\tau}\rangle_{\mathcal{E}_h^{\partial}} .
	\end{aligned}
	$$
	Next, since $\widehat{\boldsymbol{z}}_h(u)=\boldsymbol{z}=\mathbf{0}$ on $\mathcal{E}_h^{\partial}$ we have
	$$
	\begin{aligned}
		&\|\boldsymbol{\Pi}_{k-1}^{\partial} \bm z_h(u) \|_{\mathcal{E}_h^{\partial}}  = \|\boldsymbol{\Pi}_{k-1}^{\partial}( \bm z_h(u)- {\bm \Pi}_{k}^{BDM}\bm  z+ {\bm \Pi}_{k}^{BDM} \bm z-\bm  z+ \bm z)-\widehat{\bm z}_h(u) \|_{\mathcal{E}_h^{\partial}} \\
		& \qquad\leq \|\boldsymbol{\Pi}_{k-1}^{\partial} (\bm z_h(u)-\bm{\Pi}_{k}^{BDM}\bm z)-(\widehat{\bm z}_h(u)-\bm\Pi_{k-1}^{\partial}\bm z) \|_{\partial \mathcal{T}_h}+ \|{\bm \Pi}_{k}^{BDM} \boldsymbol{z}-\boldsymbol{z} \|_{\mathcal{E}_h^{\partial}} .
	\end{aligned}
	$$
	This together with the above lemmas gives
	$$
	\begin{aligned}
		\|u-u_h \|_{\mathcal{E}_h^{\partial}}+ \|\zeta_{\boldsymbol{y}} \|_{\mathcal{T}_h} \lesssim & \|\eta (\mathbb T-\mathbb T_h(u))\|_{\partial\mathcal T_h} + \|\eta({\bm \Pi}_{k}^{BDM} \boldsymbol{z}-\boldsymbol{z} )\|_{\mathcal{E}_h^{\partial}}\\
		& +\|\eta(\boldsymbol{\Pi}_{k-1}^{\partial} (\bm z_h(u)-\bm{\Pi}_{k-1}^{BDM}\bm z)-(\widehat{\bm z}_h(u)-\bm\Pi_{k-1}^{\partial}\bm z) )\|_{\partial \mathcal{T}_h} .
	\end{aligned}
	$$
	By the above lemmas and properties of the $L^2$ projection, we have
	$$
	\left\|u-u_h\right\|_{\mathcal{E}_h^{\partial}}+\left\|\zeta_{\boldsymbol{y}}\right\|_{\mathcal{T}_h} \lesssim h^{s_{\bm y}-\frac{1}{2}}|\boldsymbol{y}|_{s_{\boldsymbol{y}}}+h^{s_{\bm z}-\frac{3}{2}}|\bm z|_{s_{\bm z}} .
	$$
	Then, by the triangle inequality we obtain
	$$
	\left\|\boldsymbol{y}-\boldsymbol{y}_h\right\|_{\mathcal{T}_h} \lesssim h^{s_{\bm y}-\frac{1}{2}}|\boldsymbol{y}|_{s_{\bm y}}+h^{s_{\bm z}-\frac{3}{2}}|\bm z|_{s_{\bm z}}.
	$$
\end{proof}

The next result follows immediately.
\begin{theorem} Let $\mathbb{L}$ and $\mathbb{L}_h$ be the solutions of \eqref{op2} and \eqref{op-dis}, respectively. Then it holds $$\left\|\mathbb{L}-\mathbb{L}_h\right\|_{\mathcal{T}_h} \lesssim h^{s_{\bm y}-1}|\boldsymbol{y}|_{s_{\bm y}}+h^{s_{\bm z}-2}|\bm z|_{s_{\bm z}}.$$
	
\end{theorem}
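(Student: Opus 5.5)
The estimate for $\mathbb L-\mathbb L_h$ follows from a triangle inequality through the auxiliary discrete solution $\mathbb L_h(u)$ (the state driven by the exact control $u$) and the $L^2$ projection $\bm\Pi^o_{k-1}\mathbb L$:
\begin{align*}
\|\mathbb L-\mathbb L_h\|_{\mathcal T_h}\le \|\mathbb L-\bm\Pi^o_{k-1}\mathbb L\|_{\mathcal T_h}+\|\bm\Pi^o_{k-1}\mathbb L-\mathbb L_h(u)\|_{\mathcal T_h}+\|\zeta_{\mathbb L}\|_{\mathcal T_h}.
\end{align*}
The first term is bounded by \Cref{projection properties} as $h^{s_{\bm y}-1}|\bm y|_{s_{\bm y}}$, since $\mathbb L=\nabla\bm y$. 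The second term is controlled by the energy-norm lemma for the auxiliary state problem, which yields $h^{s_{\bm y}-1}|\bm y|_{s_{\bm y}}$ (here $\nu=1$).

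The main work is the third term $\zeta_{\mathbb L}$. Subtracting the discrete optimality system \eqref{op-dis} from the auxiliary problem shows that $(\zeta_{\mathbb L},\zeta_{\bm y},\zeta_{\widehat{\bm y}},\zeta_p)$ satisfies
\begin{align*}
\mathscr B(\zeta_{\mathbb L},\zeta_{\bm y},\zeta_{\widehat{\bm y}}|_{\mathcal E_h^o},\zeta_p;\mathbb G_1,\bm v_1,\widehat{\bm v}_1,q_1)=\langle (\Pi^\partial_{k-1}u-u_h)\bm\tau,\mathbb G_1\bm n\rangle_\Gamma+\langle \eta(\Pi^\partial_{k-1}u-u_h)\bm\tau,\bm v_1\rangle_\Gamma .
\end{align*}
I would then apply the inf-sup condition \Cref{Theorem 2.3}, after lifting the boundary data into the trace. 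Taking $\zeta_{\widehat{\bm y}}$ equal to $(\Pi^\partial_{k-1}u-u_h)\bm\tau$ on $\mathcal E_h^\partial$, as in the paper's definition, moves the data into the stabilization and flux terms. The right-hand side is then bounded using the inverse/trace inequality \eqref{trace_2} and $\eta=h^{-1}$ on the quasi-uniform mesh:
\begin{align*}
\|\zeta_{\mathbb L}\|_{\mathcal T_h}\lesssim h^{-1/2}\|\Pi^\partial_{k-1}u-u_h\|_{\mathcal E_h^\partial}\lesssim h^{-1/2}\|u-u_h\|_{\mathcal E_h^\partial}.
\end{align*}
The last step uses the $L^2$ stability of $\Pi^\partial_{k-1}$, which holds because $u_h\in U_h$.

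Inserting the preceding lemma's bound $\|u-u_h\|_{\mathcal E_h^\partial}\lesssim h^{s_{\bm y}-1/2}|\bm y|_{s_{\bm y}}+h^{s_{\bm z}-3/2}|\bm z|_{s_{\bm z}}$ gives $h^{s_{\bm y}-1}|\bm y|_{s_{\bm y}}+h^{s_{\bm z}-2}|\bm z|_{s_{\bm z}}$. Combining the three terms proves the claim.

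The delicate step is the third one, namely obtaining the $h^{-1/2}$ stability of the discrete state with respect to boundary data. The inf-sup norm contains $\eta^{1/2}$-weighted boundary jumps, so each power of $h$ must be tracked carefully. If the lifting into $\zeta_{\widehat{\bm y}}$ is awkward, I would instead test directly with $(\zeta_{\mathbb L},\zeta_{\bm y},\zeta_{\widehat{\bm y}},\zeta_p)$, as in Step 1 of \Cref{Theorem 2.3}. I would then absorb the boundary terms with Young's inequality, using \eqref{trace_2} on $\mathbb G_1\bm n$ and $\bm v_1$.
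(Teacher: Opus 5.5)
Your decomposition $\mathbb L-\mathbb L_h=(\mathbb L-\bm\Pi^o_{k-1}\mathbb L)+(\bm\Pi^o_{k-1}\mathbb L-\mathbb L_h(u))+\zeta_{\mathbb L}$ matches the route the paper has in mind. So do the auxiliary-state energy lemma and the final insertion of the control estimate via $\|\zeta_{\mathbb L}\|_{\mathcal T_h}\lesssim h^{-1/2}\|u-u_h\|_{\mathcal E_h^\partial}$. The paper says only that the result ``follows immediately'' and leaves this $h^{-1/2}$ stability step implicit.

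The tool you name for that step does not work. Suppose you bound $\langle \eta(\Pi^\partial_{k-1}u-u_h)\bm\tau,\bm v_1\rangle_\Gamma$ by applying \eqref{trace_2} to $\bm v_1$. You get $h^{-1}\cdot h^{-1/2}\|\Pi^\partial_{k-1}u-u_h\|_\Gamma\|\bm v_1\|_{\mathcal T_h}$. Even after a discrete Poincaré inequality, this gives only $\|\zeta_{\mathbb L}\|_{\mathcal T_h}\lesssim h^{-3/2}\|u-u_h\|_{\mathcal E_h^\partial}$, a full power of $h$ short. In your fallback energy test (Step~1 of \Cref{Theorem 2.3}), $\|\zeta_{\bm y}\|_{\mathcal T_h}$ does not appear on the left-hand side, so Young's inequality cannot absorb it.

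The fix uses the stabilization part of the norm. On each boundary edge, $(\Pi^\partial_{k-1}u-u_h)\bm\tau\in[\mathcal P_{k-1}(E)]^2$, so you may replace $\bm v_1$ by $\bm\Pi^\partial_{k-1}\bm v_1$. Because $\widehat{\bm v}_1=\bm 0$ on $\Gamma$, the quantity $\|\eta^{1/2}\bm\Pi^\partial_{k-1}\bm v_1\|_{\Gamma}\le\|\eta^{1/2}(\bm\Pi^\partial_{k-1}\bm v_1-\widehat{\bm v}_1)\|_{\partial\mathcal T_h}$ is already part of $\interleave(\mathbb G_1,\bm v_1,\widehat{\bm v}_1)\interleave$. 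This yields $|\langle \eta(\Pi^\partial_{k-1}u-u_h)\bm\tau,\bm v_1\rangle_\Gamma|\lesssim h^{-1/2}\|\Pi^\partial_{k-1}u-u_h\|_\Gamma\interleave(\mathbb G_1,\bm v_1,\widehat{\bm v}_1)\interleave$. Applying \eqref{trace_2} to $\mathbb G_1$ handles the flux term with the same factor.

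Also, \Cref{Theorem 2.3} is stated only for traces in $\widehat{\bm V}_h^{\bm 0}$. Apply it to $(\zeta_{\mathbb L},\zeta_{\bm y},\zeta_{\widehat{\bm y}}|_{\mathcal E_h^o},\zeta_p)$, extended by zero on $\Gamma$, and keep the boundary data on the right-hand side. Do not apply it to the lifted tuple, and do not test with a trace that is nonzero on $\Gamma$, since the discrete state equation does not hold for such test functions. With these two adjustments your argument closes and gives $h^{s_{\bm y}-1}|\bm y|_{s_{\bm y}}+h^{s_{\bm z}-2}|\bm z|_{s_{\bm z}}$.
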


	\section{Numerical Experiments}\label{sec8}
In this section we present two-dimensional experiments on triangular meshes and three-dimensional experiments on tetrahedral meshes to illustrate the theoretical results for the BDM- and RT-based HDG schemes. All examples are implemented in C++ using the Eigen library \cite{eigenweb} and Hypre \cite{10.1007/3-540-47789-6_66}.
We denote our methods employing discontinuous traces with BDM and RT elements as BDM and RT, respectively. For our methods utilizing continuous traces with BDM and RT elements, we denote them as BDM2 and RT2, respectively. To solve the linear system within our methods, we employ static condensation to eliminate the unknown $\mathbb L_h$, and we utilize AMG and CG methods to solve the Schur complement system of \eqref{matrix}.
The numerical results support the following conclusions:
\begin{enumerate}
	\item The convergence rates exhibited by our methods align with the theoretical outcomes presented in \Cref{section3.3} and \Cref{section4.3}, remaining independent of the fluid viscosity coefficient.
	\item The quantity $\|\nabla\cdot\bm u_h\|_{\mathcal{T}_h}$ stays at or near machine precision, confirming the exactly divergence-free property of the proposed methods and their pressure robustness.
\end{enumerate}  
\subsection{Two-Dimensional Numerical Results}
The body forces $\bm f$ are selected such that the analytical solution to \Cref{Ori_problem2}, considering the homogeneous Dirichlet boundary condition, is expressed as
\begin{align*}
	u_1&=-x^2(x-1)^2 y(y-1)(2 y-1), \\ 
	u_2&=x(x-1)(2 x-1) y^2(y-1)^2, \\ 
	p&=x^6-y^6.
\end{align*}
\endgroup

In the forthcoming numerical examples, we consider $\Omega=[0,1]\times[0,1]$, with a constant viscosity of $\nu=1.00E+00$ and $1.00E-03$, where $\nu=Re^{-1}$. The computational mesh consists of a regular triangulation with $2\times n\times n$ triangles, denoted as an $n \times n$ mesh. The results, depicted in \Cref{table1}-\Cref{table7}, include 'Iter' representing the iterations to solve the Schur complement system, while 'DOF' refers to the degrees of freedom within the entire linear system prior to static condensation.

\begin{table}
	\renewcommand{\arraystretch}{0.9}
	\centering
	\resizebox{1.0\textwidth}{!}{
		\begin{tabular}{c|c|c|c|c|c|c|c|c|c|c}
			\Xhline{1pt}
			\multirow{2}{*}{$k$}&\multirow{2}{*}{Mesh}&\multirow{2}{*}{Iter}&\multirow{2}{*}{DOF}&\multicolumn{2}{c|}{$\frac{\|\mathbb{L}-\mathbb{L}_h\|_{\mathcal{T}_h}}{\|\mathbb{L}\|_{\mathcal{T}_h}}$}&\multicolumn{2}{c|}{$\frac{\|\bm u-\bm u_h\|_{\mathcal{T}_h}}{\|\bm u\|_{\mathcal{T}_h}}$}&\multicolumn{2}{c|}{$\frac{\|p-p_h\|_{\mathcal{T}_h}}{\|p\|_{\mathcal{T}_h}}$}&\multirow{2}{*}{$\|\nabla\cdot \bm u_h\|_{\mathcal{T}_h}$}\\
			\cline{5-10}
			& & &  &Error&Rate &Error&Rate&Error&Rate&\\
			\hline
			&$2\times2$&5&105&8.7876E-01&-&1.6346E+00&-&7.4775E-01&-&1.5914E-14\\
			&$4\times4$&16&385&4.9997E-01&0.81&4.1603E-01&1.97&4.4817E-01&0.74&3.5020E-15\\
			&$8\times8$&36&1473&2.6443E-01&0.92&1.1110E-01&1.90&2.3639E-01&0.92&2.9951E-11\\
			1&$16\times16$&48&5761&1.3431E-01&0.98&2.8978E-02&1.94&1.1983E-01&0.98& 1.2925E-11\\
			&$32\times32$&52&22785&6.7437E-02&0.99&7.4045E-03&1.97&6.0121E-02&1.00&1.0886E-10\\
			&$64\times64$&57&90625&1.6892E-02&1.00&1.8709E-03&1.98&3.0085E-02&1.00&3.1427E-11\\
			&$128\times128$&64&361473&1.6892E-02&1.00&4.7018E-04&1.99&1.5045E-02&1.00&1.2387E-11\\
			\Xhline{1pt}
			&$2\times2$&16&257&4.3054E-01&-&3.4121E-01&-&2.1303E-01&-&3.1673E-13\\
			&$4\times4$&50&969&1.2459E-01&1.79&4.6550E-02&2.87&6.6175E-02&1.69&8.0576E-11\\
			&$8\times8$&68&3761&3.3334E-02&1.90&5.9407E-03&2.97&1.7483E-02&1.92&1.3345E-10 \\
			2&$16\times16$&78&14817&8.5262E-03&1.97&7.3986E-04&3.01&4.4313E-03&1.98&4.0777E-11\\
			&$32\times32$&84&58817&2.1490E-03&1.99&9.2249E-05&3.00&1.1116E-03&2.00&1.5691E-11\\
			&$64\times64$&82&234369&5.3897E-04&2.00&1.1521E-05 &3.00&2.7814E-04&2.00&1.0472E-10 \\
			&$128\times128$&87&935681&1.3492E-04&2.00&1.4399E-06&3.00&6.9551E-05&2.00&1.3483E-10\\
			\Xhline{1pt}
	\end{tabular}}
	\caption{Results for $\nu=1.0$ with $BDM$ element on triangular meshes}
	\label{table1}
\end{table}
\begin{table}
	\renewcommand{\arraystretch}{0.9}
	\centering
	\resizebox{1.0\textwidth}{!}{
		\scalebox{0.8}{
			\begin{tabular}{c|c|c|c|c|c|c|c|c|c|c}
				\Xhline{1pt}
				\multirow{2}{*}{$k$}&\multirow{2}{*}{Mesh}&\multirow{2}{*}{Iter}&\multirow{2}{*}{DOF}&\multicolumn{2}{c|}{$\frac{\|\mathbb{L}-\mathbb{L}_h\|_{\mathcal{T}_h}}{\|\mathbb{L}\|_{\mathcal{T}_h}}$}&\multicolumn{2}{c|}{$\frac{\|\bm u-\bm u_h\|_{\mathcal{T}_h}}{\|\bm u\|_{\mathcal{T}_h}}$}&\multicolumn{2}{c|}{$\frac{\|p-p_h\|_{\mathcal{T}_h}}{\|p\|_{\mathcal{T}_h}}$}&\multirow{2}{*}{$\|\nabla\cdot \bm u_h\|_{\mathcal{T}_h}$}\\
				\cline{5-10}
				& & &  &Error&Rate &Error&Rate&Error&Rate&\\
				\hline
				&$2\times2$&5&105&8.7876E-01&-&1.6346E+00&-&7.4725E-01&-&8.4867E-14\\
				&$4\times4$&15&385&4.9997E-01&0.81&4.1603E-01&1.97&4.4803E-01&0.74&2.5906E-13\\
				&$8\times8$&40&1473&2.6443E-01&0.92&1.1110E-01&1.90&2.3631E-01&0.92&2.3752E-09\\
				1&$16\times16$&50&5761&1.3431E-01&0.98&2.8978E-02&1.94&1.1980E-01&0.98&1.2337E-09\\
				&$32\times32$&52&22785&6.7437E-02&0.99 &7.4045E-03&1.97&6.0111E-02&0.99&2.2800E-09\\
				&$64\times64$&54&90625&3.3765E-02&1.00&1.8709E-03&1.98&3.0081E-02&1.00&1.1615E-09\\
				&$128\times128$&55&361473&1.6892E-02&1.00&4.7018E-04&1.99&1.5044E-02&1.00&7.5991E-10\\
				\Xhline{1pt}
				&$2\times2$&14&257&4.3054E-01&-&3.4121E-01&-&2.1226E-01&-&3.0745E-11\\
				&$4\times4$&65&969&1.2459E-01&1.79&4.6550E-02&2.87&6.6096E-02&1.68&7.6675E-09\\
				&$8\times8$&71&3761&3.3334E-02&1.90&5.9407E-03&2.97&1.7471E-02&1.92&1.1510E-08\\
				2&$16\times16$&72&14817&8.5262E-03&1.97&7.3986E-04&3.01&4.4290E-03&1.98&6.5855E-09\\
				&$32\times32$&74&58817&2.1490E-03&1.99&9.2249E-05&3.00&1.1111E-03 &1.99&5.8807E-09\\
				&$64\times64$&79&234369&5.3897E-04&2.00&1.1521E-05&3.00&2.7802E-04&2.00&2.5037E-09\\
				&$128\times128$&81&935681&1.3492E-04&2.00&1.4398E-06&3.00&6.9521E-05&2.00& 1.4712E-09\\
				\Xhline{1pt}
	\end{tabular}}}
	\caption{Results for $\nu=1.0e-3$ with $BDM$ element on triangular meshes}
	\label{table2}
\end{table}	
\begin{table}
	\renewcommand{\arraystretch}{0.9}
	\centering
	\resizebox{\textwidth}{!}{
		\begin{tabular}{c|c|c|c|c|c|c|c|c|c|c}
			\Xhline{1pt}
			\multirow{2}{*}{$k$}&\multirow{2}{*}{Mesh}&\multirow{2}{*}{Iter}&\multirow{2}{*}{DOF}&\multicolumn{2}{c|}{$\frac{\|\mathbb{L}-\mathbb{L}_h\|_{\mathcal{T}_h}}{\|\mathbb{L}\|_{\mathcal{T}_h}}$}&\multicolumn{2}{c|}{$\frac{\|\bm u-\bm u_h\|_{\mathcal{T}_h}}{\|\bm u\|_{\mathcal{T}_h}}$}&\multicolumn{2}{c|}{$\frac{\|p-p_h\|_{\mathcal{T}_h}}{\|p\|_{\mathcal{T}_h}}$}&\multirow{2}{*}{$\|\nabla\cdot \bm u_h\|_{\mathcal{T}_h}$}\\
			\cline{5-10}
			& & &  &Error&Rate &Error&Rate&Error&Rate&\\
			\hline
			&$2\times2$&15&233&4.6891E-01&-&4.5948E-01&-&2.1492E-01&-&0.0000E+00\\
			&$4\times4$&41&881&1.7854E-01&1.39&1.7348E-01&1.41&6.7251E-02&1.68&8.6488E-16\\
			&$8\times8$&55&3425&7.2508E-02&1.30&5.0628E-02&1.78&1.8715E-02&1.85&3.4117E-10\\
			1&$16\times16$&59&13505&3.3003E-02&1.14&1.3290E-02&1.93&5.5924E-03&1.74&2.5742E-11\\
			&$32\times32$&61&53633&1.6005E-02&1.04&3.3680E-03&1.98&2.0421E-03&1.45&1.0445E-11\\
			&$64\times64$&63&213761&7.9353E-03&1.01&8.4501E-04&1.99&9.0145E-04&1.18&3.1954E-11\\
			&$128\times128$&78&853505&3.9589E-03&1.00&2.1144E-04&2.00&4.3445E-04&1.05&2.0361E-11\\
			\Xhline{1pt}
			&$2\times2$&34&433&1.6022E-01&-&2.0030E-01&-&5.0748E-02&-&0.0000E+00\\
			&$4\times4$&73&1657&3.5563E-02&2.17&2.8303E-02&2.82&7.5372E-03&2.75&1.2594E-14\\
			&$8\times8$&102&6481&7.4159E-03&2.26&3.6936E-03&2.94&1.1090E-03&2.76&1.2885E-10\\
			2&$16\times16$&117&25633&1.6382E-03&2.18&4.6259E-04&3.00&1.9417E-04&2.51&5.0604E-11\\
			&$32\times32$&128&101953&3.8556E-04&2.09&5.7562E-05&3.01&4.0965E-05&2.24&3.4890E-11\\
			&$64\times64$&133&406657&9.3815E-05&2.04&7.1681E-06&3.01&9.5770E-06&2.10&1.7449E-11\\
			&$128\times128$&126&1624321&2.3165E-05&2.02&8.9395E-07&3.00&2.3324E-06&2.04&7.2754E-11\\
			\Xhline{1pt}
	\end{tabular}}
	\caption{Results for $\nu=1.0$ with $RT$ element on triangular meshes}
	\label{table3}
\end{table}	

\begin{table}
	\renewcommand{\arraystretch}{0.9}
	\centering
	\resizebox{\textwidth}{!}{
		\begin{tabular}{c|c|c|c|c|c|c|c|c|c|c}
			\Xhline{1pt}
			\multirow{2}{*}{$k$}&\multirow{2}{*}{Mesh}&\multirow{2}{*}{Iter}&\multirow{2}{*}{DOF}&\multicolumn{2}{c|}{$\frac{\|\mathbb{L}-\mathbb{L}_h\|_{\mathcal{T}_h}}{\|\mathbb{L}\|_{\mathcal{T}_h}}$}&\multicolumn{2}{c|}{$\frac{\|\bm u-\bm u_h\|_{\mathcal{T}_h}}{\|\bm u\|_{\mathcal{T}_h}}$}&\multicolumn{2}{c|}{$\frac{\|p-p_h\|_{\mathcal{T}_h}}{\|p\|_{\mathcal{T}_h}}$}&\multirow{2}{*}{$\|\nabla\cdot \bm u_h\|_{\mathcal{T}_h}$}\\
			\cline{5-10}
			& & &  &Error&Rate &Error&Rate&Error&Rate&\\
			\hline
			&$2\times2$&13&233&4.6891E-01&-&4.5948E-01&-&2.1226E-01&-&0.0000E+00\\
			&$4\times4$&50&881&1.7854E-01&1.39&1.7348E-01&1.41&6.6096E-02&1.68&1.4285E-12\\
			&$8\times8$&61&3425&7.2508E-02&1.30&5.0628E-02&1.78&1.7471E-02&1.92&2.3668E-09\\
			1&$16\times16$&56&13505&3.3003E-02&1.14&1.3290E-02&1.93&4.4290E-03&1.98&2.7758E-09\\
			&$32\times32$&54&53633&1.6005E-02&1.04&3.3680E-03&1.98&1.1111E-03&1.99&2.7501E-09\\
			&$64\times64$&58&213761&7.9353E-03&1.01&8.4501E-04&1.99&2.7802E-04&2.00&2.4598E-09\\
			&$128\times128$&58&853505&3.9589E-03&1.00&2.1144E-04&2.00&6.9522E-05&2.00&1.3415E-09\\
			\Xhline{1pt}
			&$2\times2$&30&433&1.6022E-01&-&2.0030E-01&-&5.0191E-02&-&0.0000E+00\\
			&$4\times4$&81&1657&3.5563E-02&2.17&2.8303E-02&2.82&7.1632E-03&2.81&2.8494E-11\\
			&$8\times8$&102&6481&7.4159E-03&2.26&3.6936E-03&2.94&9.2462E-04&2.95&9.2019E-09\\
			2&$16\times16$&101&25633&1.6382E-03&2.18&4.6259E-04&3.00&1.1650E-04&2.99& 1.2033E-08\\
			&$32\times32$&111&101953&3.8556E-04&2.09&5.7562E-05&3.01&1.4591E-05 &3.00&1.1449E-08\\
			&$64\times64$&112&406657&9.3815E-05&2.04&7.1681E-06&3.01&1.8248E-06&3.00&4.9414E-09\\
			&$128\times128$&122&1624321&2.3165E-05&2.02&8.9398E-07&3.00&2.2814E-07&3.00&6.0632E-09\\
			\Xhline{1pt}
	\end{tabular}}
	\caption{Results for $\nu=1.0e-3$ with $RT$ element on triangular meshes}
	\label{table4}
\end{table}	

\begin{table}
	\renewcommand{\arraystretch}{0.9}
	\centering
	\resizebox{\textwidth}{!}{
		\begin{tabular}{c|c|c|c|c|c|c|c|c|c|c}
			\Xhline{1pt}
			\multirow{2}{*}{$k$}&\multirow{2}{*}{Mesh}&\multirow{2}{*}{Iter}&\multirow{2}{*}{DOF}&\multicolumn{2}{c|}{$\frac{\|\mathbb{L}-\mathbb{L}_h\|_{\mathcal{T}_h}}{\|\mathbb{L}\|_{\mathcal{T}_h}}$}&\multicolumn{2}{c|}{$\frac{\|\bm u-\bm u_h\|_{\mathcal{T}_h}}{\|\bm u\|_{\mathcal{T}_h}}$}&\multicolumn{2}{c|}{$\frac{\|p-p_h\|_{\mathcal{T}_h}}{\|p\|_{\mathcal{T}_h}}$}&\multirow{2}{*}{$\|\nabla\cdot \bm u_h\|_{\mathcal{T}_h}$}\\
			\cline{5-10}
			& & &  &Error&Rate &Error&Rate&Error&Rate&\\
			\hline
			&$2\times2$&5&91&8.5225E-01&-&1.0301E+00&-&7.4725E-01&-&0.0000E+00\\
			&$4\times4$&15&323&5.3123E-01&0.68&2.6035E-01&1.98&4.4803E-01& 0.74&2.5124E-14\\
			&$8\times8$&24&1219&3.0403E-01&0.81&6.5810E-02&1.98&2.3631E-01 &0.92&6.4850E-14 \\
			1&$16\times16$&30&4739&1.6321E-01&0.90&1.6926E-02&1.96& 1.1980E-01&0.98&9.0067E-10\\
			&$32\times32$&30&18691&8.4393E-02&0.95&4.2789E-03&1.98&6.0111E-02&0.99&6.6416E-10\\
			&$64\times64$&31&74243&4.2886E-02&0.98&1.0733E-03&2.00&3.0081E-02&1.00&1.0473E-09 \\
			&$128\times128$&33&295939&2.1615E-02&0.99&2.6860E-04&2.00&1.5044E-02 &1.00&8.1687E-10\\
			\Xhline{1pt}
			&$2\times2$&14&243&4.3798E-01&-&2.8725E-01&-&2.1226E-01&-&0.0000E+00\\
			&$4\times4$&48&907&1.2805E-01&1.77&3.9618E-02&2.86&6.6096E-02&1.68&6.2653E-11\\
			&$8\times8$&68&3507&1.2805E-01&1.84&5.2795E-03&2.91&1.7471E-02&1.92&5.4972E-09\\
			2&$16\times16$&68&13795&9.7642E-03&1.87&6.7579E-04&2.97&4.4290E-03&1.98&9.0096E-09\\
			&$32\times32$&69&54723&2.6173E-03&1.90&8.3955E-05&3.01&1.1111E-03&1.99&7.5920E-09\\
			&$64\times64$&76&217987&6.8013E-04&1.94&1.0379E-05&3.02&2.7802E-04&2.00&4.0029E-09\\
			&$128\times128$&76&870147&1.7326E-04&1.97&1.2881E-06&3.01&6.9521E-05&2.00&2.2944E-09\\
			\Xhline{1pt}
	\end{tabular}}
	\caption{Results for $\nu=1.0e-3$ with $BDM2$ element on triangular meshes}
	\label{table5}
\end{table}	

\begin{table}
	\renewcommand{\arraystretch}{0.9}
	\centering
	\resizebox{\textwidth}{!}{
		\begin{tabular}{c|c|c|c|c|c|c|c|c|c|c}
			\Xhline{1pt}
			\multirow{2}{*}{$k$}&\multirow{2}{*}{Mesh}&\multirow{2}{*}{Iter}&\multirow{2}{*}{DOF}&\multicolumn{2}{c|}{$\frac{\|\mathbb{L}-\mathbb{L}_h\|_{\mathcal{T}_h}}{\|\mathbb{L}\|_{\mathcal{T}_h}}$}&\multicolumn{2}{c|}{$\frac{\|\bm u-\bm u_h\|_{\mathcal{T}_h}}{\|\bm u\|_{\mathcal{T}_h}}$}&\multicolumn{2}{c|}{$\frac{\|p-p_h\|_{\mathcal{T}_h}}{\|p\|_{\mathcal{T}_h}}$}&\multirow{2}{*}{$\|\nabla\cdot \bm u_h\|_{\mathcal{T}_h}$}\\
			\cline{5-10}
			& & &  &Error&Rate &Error&Rate&Error&Rate&\\
			\hline
			&$2\times2$&5&91&8.5225E-01&-&1.0301E+00&-&7.4764E-01&-&0.0000E+00\\
			&$4\times4$&16&323&5.3123E-01&0.68&2.6035E-01&1.98&4.4820E-01&0.74&1.0442E-15 \\
			&$8\times8$&25&1219&3.0403E-01&0.81 & 6.5810E-02 &1.98&2.3643E-01&0.92& 9.8384E-17 \\
			1&$16\times16$&31&4739&1.6321E-01&0.90&1.6926E-02&1.96&1.1987E-01&0.98&1.1844E-11\\
			&$32\times32$&35&18691&8.4393E-02 &0.95& 4.2789E-03&1.98&6.0152E-02&0.99&3.6107E-12\\
			&$64\times64$&38&74243&4.2886E-02&0.98 &1.0733E-03&2.00&3.0103E-02&1.00& 1.8807E-12\\
			&$128\times128$&41&295939&2.1615E-02&0.99&2.6860E-04&2.00&1.5055E-02&1.00&4.4443E-12\\
			\Xhline{1pt}
			&$2\times2$&15&243&4.3798E-01&-&2.8725E-01&-&2.1300E-01&-&0.0000E+00\\
			&$4\times4$&48&907&1.2805E-01&1.77&3.9618E-02&2.86 &6.6166E-02 &1.69&2.1572E-12\\
			&$8\times8$&62&3507&3.5670E-02&1.84&5.2795E-03&2.91&1.7480E-02&1.92&2.3932E-11\\
			2&$16\times16$&71&13795& 9.7642E-03&1.87&6.7579E-04&2.97&4.4303E-03&1.98&1.3434E-10\\
			&$32\times32$&73&54723&2.6173E-03&1.90&8.3955E-05&3.01&1.1113E-03&2.00&1.0707E-10\\
			&$64\times64$&75&217987&6.8013E-04&1.94&1.0379E-05&3.02 &2.7809E-04&2.00&1.3155E-10\\
			&$128\times128$&84&870147&1.7326E-04&1.97&1.2881E-06&3.01&6.9540E-05&2.00&2.6619E-10\\
			\Xhline{1pt}
	\end{tabular}}
	\caption{Results for $\nu=1.0$ with $BDM2$ element on triangular meshes}
	\label{table6}
\end{table}	

\begin{table}
	\renewcommand{\arraystretch}{0.9}
	\centering
	\resizebox{\textwidth}{!}{
		\begin{tabular}{c|c|c|c|c|c|c|c|c|c|c}
			\Xhline{1pt}
			\multirow{2}{*}{$k$}&\multirow{2}{*}{Mesh}&\multirow{2}{*}{Iter}&\multirow{2}{*}{DOF}&\multicolumn{2}{c|}{$\frac{\|\mathbb{L}-\mathbb{L}_h\|_{\mathcal{T}_h}}{\|\mathbb{L}\|_{\mathcal{T}_h}}$}&\multicolumn{2}{c|}{$\frac{\|\bm u-\bm u_h\|_{\mathcal{T}_h}}{\|\bm u\|_{\mathcal{T}_h}}$}&\multicolumn{2}{c|}{$\frac{\|p-p_h\|_{\mathcal{T}_h}}{\|p\|_{\mathcal{T}_h}}$}&\multirow{2}{*}{$\|\nabla\cdot \bm u_h\|_{\mathcal{T}_h}$}\\
			\cline{5-10}
			& & &  &Error&Rate &Error&Rate&Error&Rate&\\
			\hline
			&$2\times2$&13&187&6.5553E-01&-&5.8710E-01&-&2.1226E-01&-&0.0000E+00\\
			&$4\times4$&53&707&4.2349E-01&0.63&2.9587E-01&0.99&6.6096E-02&1.68&4.4009E-12\\
			&$8\times8$&67&2755&2.5869E-01 &0.71&1.0540E-01&1.49&1.7471E-02&1.92&4.0022E-09\\
			1&$16\times16$&69&10883&1.4278E-01 &0.86&2.9718E-02 &1.83&4.4290E-03 &1.98&5.8858E-09\\
			&$32\times32$&70&43267&7.4795E-02 &0.93&7.7837E-03&1.93&1.1111E-03 &1.99&6.2820E-09\\
			&$64\times64$&74&172547&3.8258E-02 &0.97&1.9872E-03&1.97&2.7802E-04&2.00 &3.4920E-09 \\
			&$128\times128$&73&689155&1.9346E-02&0.98&5.0187E-04&1.99&6.9523E-05&2.00 &1.8969E-09\\
			\Xhline{1pt}
			&$2\times2$&34&387&2.4838E-01&-&2.1572E-01&-&5.0933E-02 &-&0.0000E+00\\
			&$4\times4$&79&1483&9.9200E-02&1.32&3.2091E-02&2.75&8.1087E-03&2.65& 3.5993E-14\\
			&$8\times8$&116&5811&3.3413E-02&1.57&4.3395E-03&2.89&1.8803E-03&2.11&9.3827E-11\\
			2&$16\times16$&143&23011&9.7807E-03&1.77&5.5595E-04&2.96&5.9079E-04&1.67&4.9966E-11\\
			&$32\times32$&151&91587&2.6295E-03&1.90&7.0295E-05&2.98&1.6831E-04& 1.81&2.4422E-11\\
			&$64\times64$&167&365443&6.7864E-04&1.95&8.8565E-06&2.99& 4.4468E-05& 1.92&1.8224E-10\\
			&$128\times128$&175&1459971&1.7210E-04&1.98&1.1126E-06&2.99&1.1381E-05&1.97&8.1758E-11\\
			\Xhline{1pt}
	\end{tabular}}
	\caption{Results for $\nu=1.0e-3$ with $RT2$ element on triangular meshes}
	\label{table7}
\end{table}

\subsection{Three-Dimensional Numerical Results}
The body forces $\bm f$ are selected to ensure that the analytical solution to \Cref{Ori_problem2}, considering the homogeneous Dirichlet boundary condition, is given by
\begin{align*}
	u_1&=-200(x-x^2)^2(2y^3-3y^2+y)(2z^3-3z^2+z), \\ 
	u_2&=-100(y-y^2)^2(2x^3-3x^2+x)(2z^3-3z^2+z), \\
	u_3&=-100(z-z^2)^2(2y^3-3y^2+y)(2x^3-3x^2+x),\\ 
	p&=100(x^6+y^6-2z^6).
\end{align*}

In the subsequent numerical examples, we consider $\Omega=[0,1]\times[0,1]\times[0,1]$ with a constant viscosity of $\nu=1.00E+00$ and $1.00E-03$, where $\nu=Re^{-1}$. The computational mesh comprises a regular triangulation with $ 6\times n\times n\times n$ tetrahedra, denoted as an $n \times n \times n$ mesh. The results, presented in \Cref{table8}-\Cref{table11}, include  {`}Iter', which also represents the iterations required to solve the Schur complement system, while  {`}DOF' refers to the degrees of freedom within the entire linear system before static condensation.

\begingroup
\begin{table}
	\renewcommand{\arraystretch}{0.9}
	\centering
	\resizebox{1.0\textwidth}{!}{
		\begin{tabular}{c|c|c|c|c|c|c|c|c|c|c}
			\Xhline{1pt}
			\multirow{2}{*}{$k$}&\multirow{2}{*}{Mesh}&\multirow{2}{*}{Iter}&\multirow{2}{*}{DOF}&\multicolumn{2}{c|}{$\frac{\|\mathbb{L}-\mathbb{L}_h\|_{\mathcal{T}_h}}{\|\mathbb{L}\|_{\mathcal{T}_h}}$}&\multicolumn{2}{c|}{$\frac{\|\bm u-\bm u_h\|_{\mathcal{T}_h}}{\|\bm u\|_{\mathcal{T}_h}}$}&\multicolumn{2}{c|}{$\frac{\|p-p_h\|_{\mathcal{T}_h}}{\|p\|_{\mathcal{T}_h}}$}&\multirow{2}{*}{$\|\nabla\cdot \bm u_h\|_{\mathcal{T}_h}$}\\
			\cline{5-10}
			& & &  &Error&Rate &Error&Rate&Error&Rate&\\
			\hline
			&$2\times2\times2$&47&1200&8.2757E-01&-&1.1413E+00&-&5.8170E-01&-&1.7207E-09\\
			1&$4\times4\times4$&100&9024&5.3705E-01&0.62&3.9107E-01&1.55 &3.5086E-01&0.73&2.4429E-09\\
			&$8\times8\times8$&135&69888&2.8917E-01&0.89&1.0477E-01&1.90&1.8552E-01&0.92&1.3340E-09\\
			&$16\times16\times16$&150&549888&1.4753E-01&0.97&2.6813E-02 &1.97&9.4119E-02&0.98&6.0735E-10\\
			&$32\times32\times32$&171&4362240&7.4212E-02&0.99&6.7694E-03&1.99&4.7232E-03&0.99&2.4689E-10\\
			\Xhline{1pt}
			&$2\times2\times2$&162&4008&4.6570E-01&-&4.5183E-01&-&1.8075E-01&-&4.5717E-09\\
			2&$4\times4\times4$&174&30624&1.6556E-01&1.49&6.4831E-02&2.80&5.5424E-02&1.71&3.5457E-09\\
			&$8\times8\times8$&174&239232&4.5781E-02&1.85&8.3567E-03 &2.96&1.4601E-02&1.92 &2.6601E-09 \\
			&$16\times16\times16$&152&1890816&1.1787E-02&1.96&1.0488E-03&2.99&3.6985E-03&1.98&1.5847E-09\\
			\Xhline{1pt}
	\end{tabular}}
	\caption{Results for $\nu=1.0e-3$ with $BDM$ element on tetrahedral meshes}
	\label{table8}
\end{table}	

\begin{table}
	\renewcommand{\arraystretch}{0.9}
	\centering
	\resizebox{1.0\textwidth}{!}{
		\begin{tabular}{c|c|c|c|c|c|c|c|c|c|c}
			\Xhline{1pt}
			\multirow{2}{*}{$k$}&\multirow{2}{*}{Mesh}&\multirow{2}{*}{Iter}&\multirow{2}{*}{DOF}&\multicolumn{2}{c|}{$\frac{\|\mathbb{L}-\mathbb{L}_h\|_{\mathcal{T}_h}}{\|\mathbb{L}\|_{\mathcal{T}_h}}$}&\multicolumn{2}{c|}{$\frac{\|\bm u-\bm u_h\|_{\mathcal{T}_h}}{\|\bm u\|_{\mathcal{T}_h}}$}&\multicolumn{2}{c|}{$\frac{\|p-p_h\|_{\mathcal{T}_h}}{\|p\|_{\mathcal{T}_h}}$}&\multirow{2}{*}{$\|\nabla\cdot \bm u_h\|_{\mathcal{T}_h}$}\\
			\cline{5-10}
			& & &  &Error&Rate &Error&Rate&Error&Rate&\\
			\hline
			&$2\times2\times2$&44&1200&8.2757E-01&-&1.1413E+00 &-&5.8171E-01&-&1.8683E-11\\
			1&$4\times4\times4$&77&9024&5.3705E-01&0.62&3.9107E-01&1.55&3.5086E-01&0.73&2.8919E-11\\
			&$8\times8\times8$&95&69888&2.8917E-01&0.89&1.0477E-01&1.90&1.8552E-01&0.92&1.3721E-11\\
			&$16\times16\times16$&105&549888&1.4753E-01&0.97&2.6813E-02&1.97& 9.4119E-02&0.98&6.1961E-12\\
			&$32\times32\times32$&112&4362240&7.42122E-02&0.99&6.76942E-03&1.99& 4.72322E-02&0.99&2.1318E-12\\
			\Xhline{1pt}
			&$2\times2\times2$&98&4008&4.6570E-01&-&4.5183E-01&-&1.8075E-01&-&1.3483E-10 \\
			2&$4\times4\times4$&130&30624&1.6556E-01&1.49& 6.4831E-02&2.80&5.5425E-02&1.71&2.4215E-11\\
			&$8\times8\times8$&135&239232&4.5781E-02&1.85&8.3567E-03&2.96&1.4601E-02&1.92&1.1929E-11\\
			&$16\times16\times16$&139&1890816&1.1787E-02 &1.96&1.0488E-03&2.99&3.6986E-03&1.98& 4.3270E-12\\
			\Xhline{1pt}
	\end{tabular}}
	\caption{Results for $\nu=1.0$ with $BDM$ element on tetrahedral meshes}
	\label{table9}
\end{table}

\begin{table}
	\renewcommand{\arraystretch}{0.9}
	\centering
	\resizebox{1.0\textwidth}{!}{
		\begin{tabular}{c|c|c|c|c|c|c|c|c|c|c}
			\Xhline{1pt}
			\multirow{2}{*}{$k$}&\multirow{2}{*}{Mesh}&\multirow{2}{*}{Iter}&\multirow{2}{*}{DOF}&\multicolumn{2}{c|}{$\frac{\|\mathbb{L}-\mathbb{L}_h\|_{\mathcal{T}_h}}{\|\mathbb{L}\|_{\mathcal{T}_h}}$}&\multicolumn{2}{c|}{$\frac{\|\bm u-\bm u_h\|_{\mathcal{T}_h}}{\|\bm u\|_{\mathcal{T}_h}}$}&\multicolumn{2}{c|}{$\frac{\|p-p_h\|_{\mathcal{T}_h}}{\|p\|_{\mathcal{T}_h}}$}&\multirow{2}{*}{$\|\nabla\cdot \bm u_h\|_{\mathcal{T}_h}$}\\
			\cline{5-10}
			& & &  &Error&Rate &Error&Rate&Error&Rate&\\
			\hline
			&$2\times2\times2$&38&921&9.8498E-01&-&5.7605E-01&-&5.8170E-01&-&4.3601E-09\\
			1&$4\times4\times4$&65&6807& 8.1093E-01& 0.28&4.9310E-01&0.22&3.5086E-01&0.73&3.3376E-09\\
			&$8\times8\times8$&74&52491&5.0081E-01&0.70&2.3639E-01&1.06&1.8552E-01&0.92&1.4031E-09\\
			&$16\times16\times16$&76&412563&2.6730E-01&0.91&7.4923E-02&1.66&9.4119E-02&0.98 &8.6188E-10\\
			&$32\times32\times32$&76&3271971&1.3587E-01&0.98&2.0072E-02 &1.90&4.7232E-02 &0.99 &3.7277E-10\\
			\Xhline{1pt}
			&$2\times2\times2$&134&3303&6.3074E-01&-&3.4050E-01&-&1.8075E-01&-& 6.7592E-09\\
			2&$4\times4\times4$&139&25035&2.6757E-01&1.24&8.9550E-02&1.93&5.5424E-02&1.71&5.6031E-09\\
			&$8\times8\times8$&134&195219&7.8583E-02&1.77&1.1873E-02&2.91 &1.4601E-02&1.92&3.4693E-09\\
			&$16\times16\times16$&136&1542435&2.1535E-02&1.87 &1.3767E-03&3.11&3.6985E-03&1.98&1.6102E-09\\
			\Xhline{1pt}
	\end{tabular}}
	\caption{Results for $\nu=1.0e-3$ with $BDM2$ element on tetrahedral meshes}
	\label{table10}
\end{table}

\begin{table}
	\renewcommand{\arraystretch}{0.9}
	\centering
	\resizebox{1.0\textwidth}{!}{
		\begin{tabular}{c|c|c|c|c|c|c|c|c|c|c}
			\Xhline{1pt}
			\multirow{2}{*}{$k$}&\multirow{2}{*}{Mesh}&\multirow{2}{*}{Iter}&\multirow{2}{*}{DOF}&\multicolumn{2}{c|}{$\frac{\|\mathbb{L}-\mathbb{L}_h\|_{\mathcal{T}_h}}{\|\mathbb{L}\|_{\mathcal{T}_h}}$}&\multicolumn{2}{c|}{$\frac{\|\bm u-\bm u_h\|_{\mathcal{T}_h}}{\|\bm u\|_{\mathcal{T}_h}}$}&\multicolumn{2}{c|}{$\frac{\|p-p_h\|_{\mathcal{T}_h}}{\|p\|_{\mathcal{T}_h}}$}&\multirow{2}{*}{$\|\nabla\cdot \bm u_h\|_{\mathcal{T}_h}$}\\
			\cline{5-10}
			& & &  &Error&Rate &Error&Rate&Error&Rate&\\
			\hline
			&$2\times2\times2$&38&921&9.8498E-01&-&5.7605E-01&-&5.8171E-01&-&4.3372E-12\\
			1&$4\times4\times4$&61&6807&8.1093E-01&0.28&4.9310E-01&0.22&3.5087E-01 &0.73 &8.5161E-12\\
			&$8\times8\times8$&70&52491&5.0081E-01&0.70&2.3639E-01&1.06&1.8553E-01&0.92&3.2946E-12\\
			&$16\times16\times16$&73&412563 &2.6730E-01&0.91&7.4923E-02&1.66&9.4124E-02& 0.98&1.5268E-12 \\
			&$32\times32\times32$&74&3271971&1.35868E-01&0.98&2.00720E-02&1.90&4.72346E-02&0.99&1.5268E-12 \\
			\Xhline{1pt}
			&$2\times2\times2$&98&3303& 6.3074E-01&-&3.4050E-01& -&1.8076E-01&-&6.2362E-11\\
			2&$4\times4\times4$&125&25035&2.6757E-01&1.24&8.9550E-02&1.93&5.5430E-02&1.71&1.5837E-11\\
			&$8\times8\times8$&130&195219&7.8583E-02&1.77&1.1873E-02 &2.91&1.4603E-02&1.92& 5.5316E-12\\
			&$16\times16\times16$&133&1542435& 2.1535E-02&1.87&1.3767E-03& 3.11& 3.6994E-03&1.98&2.6960E-12\\
			\Xhline{1pt}
	\end{tabular}}
	\caption{Results for $\nu=1.0$ with $BDM2$ element on tetrahedral meshes}
	\label{table11}
\end{table}	

\subsection{Tangential boundary control problem}
The domain is the unit square $\Omega=(0,1)^2$ and the data is chosen as
$$
\begin{aligned}
	& y_1=-2 \pi^2 \sin ^2\left(\pi x_1\right) \cos \left(\pi x_2\right)-2 \pi^2 \sin \left(\pi x_1\right) \sin \left(2 \pi x_2\right) {,} \\
	& y_2=2 \pi^2 \cos \left(\pi x_1\right) \sin ^2\left(\pi x_2\right)+2 \pi^2 \sin \left(\pi x_2\right) \sin \left(2 \pi x_1\right)  {,}\\
	& z_1=\pi \sin ^2\left(\pi x_1\right) \sin \left(2 \pi x_2\right), \quad z_2=-\pi \sin ^2\left(\pi x_2\right) \sin \left(2 \pi x_1\right), \\
	& p=10^nx \text{ if }x< 0.5, \quad p=-10^n(x-0.5) \text{ if }x\ge 0.5,\\
	& w=10^ny \text{ if }y< 0.5, \quad w=-10^n(y-0.5) \text{ if }y\ge 0.5, \quad \gamma=1 .
\end{aligned}
$$
Here $n$ is a parameter. We take $n=3$ and $7$. The results are presented as below, where 
\begin{align*}
	\operatorname{div}\left(\boldsymbol{y}_h\right)=\max _{K \in \mathcal{T}_h} \frac{1}{|K|} \int_K\left|\nabla \cdot \boldsymbol{y}_h\right| \mathrm{d} \boldsymbol{x} {.}
\end{align*}
These results confirm the pressure robustness of the proposed method.
\endgroup

\begin{table}
	\caption{Pressure-robustness:  Errors and observed convergence orders for the control $u$, state $\bm y$,  and its flux $\mathbb L$.}
	\label{Table1}
	\centering
	\begin{tabular}{c|c|c|c|c|c|c|c|c|c}
		\Xhline{1pt}
		
		\multirow{2}{*}{$k$} &
		\multirow{2}{*}{$n$} &
		\multirow{2}{*}{$\frac{\sqrt{2}}{h}$} &
		\multirow{2}{*}{$\textup{div}(\bm y_h)$}&
		
		\multicolumn{2}{c|}{$\|\bm y-\bm y_h\|_{L^2(\mathcal T_h)}$} &

		\multicolumn{2}{c|}{$\|\mathbb L-\mathbb L_h\|_{L^2(\mathcal T_h)}$}&

		\multicolumn{2}{c}{$\|u-u_h\|_{L^2(\mathcal E_h^{\partial})}$}
		\\
		
		\cline{5-10}
		& 	& 	& &Error &Rate  &Error &Rate &Error &Rate    \\
		
		\Xhline{1pt}
		
		&		&	4	&	8.8818E-16	&	8.7605E+00	&		&	5.3280E+01	&		&	6.6617E+00	&		\\
		&		&	8	&	6.6613E-16	&	2.1975E+00	&	2.00 	&	2.7916E+01	&	0.93 	&	3.3964E+00	&	0.97 	\\
		1	&	3	&	16	&	4.4409E-16	&	5.4087E-01	&	2.02 	&	1.4101E+01	&	0.99 	&	1.6509E+00	&	1.04 	\\
		&		&	32	&	3.0531E-16	&	1.3407E-01	&	2.01 	&	7.0468E+00	&	1.00 	&	8.0570E-01	&	1.03 	\\
		&		&	64	&	1.8041E-16	&	3.3412E-02	&	2.00 	&	3.5201E+00	&	1.00 	&	3.9775E-01	&	1.02 	\\
		\Xhline{1pt}
		&		&	4	&	1.7764E-15	&	8.7605E+00	&		&	5.3280E+01	&		&	6.6617E+00	&		\\
		&		&	8	&	8.8818E-16	&	2.1975E+00	&	2.00 	&	2.7916E+01	&	0.93 	&	3.4154E+00	&	0.96 	\\
		1	&	7	&	16	&	4.4409E-16	&	5.4087E-01	&	2.02 	&	1.4101E+01	&	0.99 	&	1.6604E+00	&	1.04 	\\
		&		&	32	&	3.0531E-16	&	1.3407E-01	&	2.01 	&	7.0468E+00	&	1.00 	&	7.9634E-01	&	1.06 	\\
		&		&	64	&	1.8735E-16	&	3.3412E-02	&	2.00 	&	3.5201E+00	&	1.00 	&	3.9208E-01	&	1.02 	\\
		
		\Xhline{1pt}
		&		&	4	&	1.6871E-15	&	1.1810E+00	&		&	1.4040E+01	&		&	1.5526E+00	&		\\
		&		&	8	&	1.7415E-15	&	1.5197E-01	&	2.96 	&	3.7803E+00	&	1.89 	&	4.3691E-01	&	1.83 	\\
		2	&	3	&	16	&	1.0398E-15	&	1.9367E-02	&	2.97 	&	1.0278E+00	&	1.88 	&	1.0731E-01	&	2.03 	\\
		&		&	32	&	9.0076E-16	&	2.4535E-03	&	2.98 	&	2.9129E-01	&	1.82 	&	2.7875E-02	&	1.94 	\\
		&		&	64	&	4.6434E-16	&	3.1170E-04	&	2.98 	&	8.7294E-02	&	1.74 	&	6.9773E-03	&	2.00 	\\
		
		\Xhline{1pt}
		&		&	4	&	1.8272E-15	&	1.1810E+00	&		&	1.4040E+01	&		&	1.6163E+00	&		\\
		&		&	8	&	1.7192E-15	&	1.5197E-01	&	2.96 	&	3.7803E+00	&	1.89 	&	4.2119E-01	&	1.94 	\\
		2	&	7	&	16	&	1.0735E-15	&	1.9367E-02	&	2.97 	&	1.0278E+00	&	1.88 	&	1.1099E-01	&	1.92 	\\
		&		&	32	&	9.0188E-16	&	2.4535E-03	&	2.98 	&	2.9129E-01	&	1.82 	&	2.7413E-02	&	2.02 	\\
		&		&	64	&	4.7971E-16	&	3.1170E-04	&	2.98 	&	8.7295E-02	&	1.74 	&	6.9481E-03	&	1.98 	\\
		
		\Xhline{1pt}
	\end{tabular}
\end{table}

\begin{table}
	\caption{Pressure-robustness: Errors and observed convergence orders for the dual state $\bm z$,  and its flux $\mathbb T$.}
	\label{Table2}
	\centering
	\begin{tabular}{c|c|c|c|c|c|c|c}
		\Xhline{1pt}
		
		\multirow{2}{*}{$k$} &
		\multirow{2}{*}{$n$} &
		\multirow{2}{*}{$\frac{\sqrt{2}}{h}$} &
		\multirow{2}{*}{$\textup{div}(\bm z_h)$}&
		
		\multicolumn{2}{c|}{$\|\bm z-\bm z_h\|_{L^2(\mathcal T_h)}$} &

		\multicolumn{2}{c}{$\|\mathbb T-\mathbb T_h\|_{L^2(\mathcal T_h)}$}

		\\
		
		\cline{5-8}
		& & 	& &Error &Rate  &Error &Rate    \\

		\Xhline{1pt}
		&		&	4	&	1.1102E-16	&	8.5118E-01	&		&	6.2681E+00	&		\\
		&		&	8	&	5.5510E-17	&	2.4732E-01	&	1.78 	&	3.3656E+00	&	0.90 	\\
		1	&	3	&	16	&	3.6210E-17	&	6.5538E-02	&	1.92 	&	1.7145E+00	&	0.97 	\\
		&		&	32	&	2.4210E-17	&	1.6797E-02	&	1.96 	&	8.6015E-01	&	1.00 	\\
		&		&	64	&	1.5610E-17	&	4.2445E-03	&	1.98 	&	4.3026E-01	&	1.00 	\\

		\Xhline{1pt}
		&		&	4	&	1.1102E-16	&	8.5118E-01	&		&	6.2681E+00	&		\\
		&		&	8	&	5.5510E-17	&	2.4732E-01	&	1.78 	&	3.3656E+00	&	0.90 	\\
		1	&	7	&	16	&	3.4690E-17	&	6.5538E-02	&	1.92 	&	1.7145E+00	&	0.97 	\\
		&		&	32	&	2.2880E-17	&	1.6797E-02	&	1.96 	&	8.6015E-01	&	1.00 	\\
		&		&	64	&	1.3010E-17	&	4.2445E-03	&	1.98 	&	4.3026E-01	&	1.00 	\\

		\Xhline{1pt}
		
		&		&	4	&	1.8332E-16	&	1.6150E-01	&		&	1.9279E+00	&		\\
		&		&	8	&	1.3189E-16	&	2.1210E-02	&	2.93 	&	5.0620E-01	&	1.93 	\\
		2	&	3	&	16	&	8.9800E-17	&	2.7042E-03	&	2.97 	&	1.2776E-01	&	1.99 	\\
		&		&	32	&	6.5330E-17	&	3.4078E-04	&	2.99 	&	3.2010E-02	&	2.00 	\\
		&		&	64	&	3.9000E-17	&	4.2742E-05	&	3.00 	&	8.0066E-03	&	2.00 	\\
		
		\Xhline{1pt}
		
		&		&	4	&	1.4317E-16	&	1.6150E-01	&		&	1.9279E+00	&		\\
		&		&	8	&	1.3096E-16	&	2.1210E-02	&	2.93 	&	5.0620E-01	&	1.93 	\\
		2	&	7	&	16	&	8.9600E-17	&	2.7042E-03	&	2.97 	&	1.2776E-01	&	1.99 	\\
		&		&	32	&	6.4560E-17	&	3.4078E-04	&	2.99 	&	3.2010E-02	&	2.00 	\\
		&		&	64	&	3.8520E-17	&	4.2742E-05	&	3.00 	&	8.0066E-03	&	2.00 	\\

		\Xhline{1pt}
	\end{tabular}
\end{table}

\begingroup
\section{Conclusion}
We developed a family of $H(\mathrm{div})$-conforming HDG methods for the steady Stokes equations and showed that the discrete velocities are exactly divergence-free. For the BDM variants we obtained the strongest theory, including optimal energy-norm and $L^2$-velocity estimates, while for the RT variants we proved optimal velocity convergence together with weaker pressure estimates. The analysis removes the $H^1$-regularity assumption on the pressure that appeared in our earlier pressure-robust control work. We also established a spectral-equivalence result for the Schur complement and applied the BDM discontinuous-trace method to tangential boundary control. Future work includes a sharper RT-pressure analysis, a more systematic treatment of the continuous-trace variants, and extensions to Navier--Stokes flows and more realistic three-dimensional benchmarks.
\endgroup

\section*{Acknowledgements}
G.\ Chen is supported by National natural science Foundation of China (NSFC) under grant number 11801063 and number 12171341.
The research of Y. Zhang is partially supported by  the US National Science Foundation (NSF) under grant number DMS-2111315.

\section*{Conflict of Interest} The authors declare that they have no conflict of interest.

\section*{Data Availability} The datasets generated during and/or analyzed during the current study are available from the corresponding author on reasonable request.

\section*{Code Availability} The codes during the current study are available from the corresponding author on reasonable request.

\end{document}